
\documentclass{daj}

\dajAUTHORdetails{%
    title = {Elekes-Szabó for groups, and approximate subgroups in weak 
    general position},
    runningtitle = {Elekes-Szabó for groups},
    author = {Martin Bays, Jan Dobrowolski, and Tingxiang Zou},
    plaintextauthor = {Martin Bays, Jan Dobrowolski, and Tingxiang Zou},
  keywords = {Elekes-Szab\'o, nilprogressions, approximate subgroups, 
  Mordell-Lang},
}   

\dajEDITORdetails{%
   year={2023},
   number={6},
   received={10 August 2021},   
   published={26 May 2023},  
   doi={10.19086/da.77361},       
}   

\usepackage[utf8]{inputenc}
\usepackage{amssymb,amsmath,enumerate,amsthm,url}
\usepackage[all,cmtip]{xy}
\usepackage{graphicx}
\usepackage{hyperref}
\usepackage{tikz-cd}

\theoremstyle{plain}
\newtheorem{theorem}{Theorem}[section]
\newtheorem{lemma}[theorem]{Lemma}
\newtheorem{proposition}[theorem]{Proposition}

\newtheorem{fact}[theorem]{Fact}
\newtheorem*{fact*}{Fact}
\newtheorem{corollary}[theorem]{Corollary}
\newtheorem{claim}[theorem]{Claim}
\newtheorem*{claim*}{Claim}
\theoremstyle{definition}
\newtheorem{definition}[theorem]{Definition}
\newtheorem*{definition*}{Definition}

\newtheorem*{notation*}{Notation}
\theoremstyle{remark}
\newtheorem{remark}[theorem]{Remark}
\newtheorem*{remark*}{Remark}

\newtheorem*{example*}{Example}

\newtheorem*{note*}{Note}
\newtheorem{question}[theorem]{Question}
\newtheorem*{question*}{Question}

\begin{document}
\newcommand{\G}{\mathbb{G}}
\newcommand{\K}{\mathbb{K}}
\newcommand{\N}{\mathbb{N}}
\newcommand{\C}{\mathbb{C}}
\newcommand{\R}{\mathbb{R}}
\newcommand{\U}{\mathcal{U}}
\newcommand{\GL}{\operatorname{GL}}
\newcommand{\Upp}{\operatorname{Upp}}
\newcommand{\ord}{\operatorname{ord}}
\providecommand{\st}{\operatorname{st}}
\providecommand{\tp}{\operatorname{tp}}
\providecommand{\rad}{\operatorname{rad}}
\providecommand{\eps}{\epsilon}

\renewcommand{\P}{\mathbb{P}}

\providecommand{\defn}[1]{{\bf #1}}
\providecommand{\bdl}{\boldsymbol\delta}
\providecommand{\floor}[1]{\lfloor #1 \rfloor}

\def\Ind#1#2{#1\setbox0=\hbox{$#1x$}\kern\wd0\hbox to 0pt{\hss$#1\mid$\hss}
\lower.9\ht0\hbox to 0pt{\hss$#1\smile$\hss}\kern\wd0}
\def\ind{\mathop{\mathpalette\Ind{}}}
\def\notind#1#2{#1\setbox0=\hbox{$#1x$}\kern\wd0\hbox to 0pt{\mathchardef
\nn=12854\hss$#1\nn$\kern1.4\wd0\hss}\hbox to
0pt{\hss$#1\mid$\hss}\lower.9\ht0 \hbox to
0pt{\hss$#1\smile$\hss}\kern\wd0}
\def\nind{\mathop{\mathpalette\notind{}}}

\begin{frontmatter}[classification=text]

\title{Elekes-Szabó for Groups, and Approximate Subgroups in Weak General 
Position}

\author[mb]{Martin Bays}
\author[jd]{Jan Dobrowolski\thanks{Supported by DFG project BA 6785/2-1.}}
\author[tz]{Tingxiang Zou\thanks{All authors were partially supported by DFG EXC 2044–390685587 and ANR-DFG AAPG2019 (Geomod).}}

\begin{abstract}
  We show that with a suitable weak notion of general position, the 
  Elekes-Szabó condition on the group operation of a connected complex 
  algebraic group characterises nilpotence of the group.
  Along the way, we prove a Mordell-Lang result for generic finitely generated 
  subgroups of commutative complex algebraic groups.
\end{abstract}
\end{frontmatter}

\section{Introduction}
The main theorem of \cite{ES-groups} shows that any complex subvariety $V 
\subseteq (\C^n)^3$ which has asymptotically large intersection with products 
of finite subsets of $\C^n$ in \emph{general position} must be, up to finite 
correspondences, the graph of an algebraic group operation. 
\cite{Breuillard-Wang} showed that the group involved must be abelian, and it 
follows that the result has content even when $V$ is taken to be the graph of 
an algebraic group operation: it shows that if $V$ has such large 
intersections with products of sets in general position, then the group must 
be abelian-by-finite. Here, general position of a subset $X \subseteq \C^n$ 
means that $X$ has boundedly finite intersections with proper subvarieties $W 
\subsetneq \C^n$. This assumption was slightly weakened in \cite{BB-cohMod} to 
\emph{coarse general position}, which allows asymptotic growth in such 
intersections slower than $|X|^\epsilon$ for all $\epsilon\in \R_{>0}$, while 
retaining the same conclusions, including commutativity of the resulting 
group. Conversely, it was shown there that the graph of every connected 
commutative complex algebraic group admits asymptotically large intersections 
with products of finite sets in coarse general position.
shown there that with this coarse general position assumption, every connected 
commutative complex algebraic group arises in this way.

Here we make a start on understanding what happens to these Elekes-Szabó 
phenomena when the general position assumption is relaxed yet further, by 
considering this special case that $V$ is the graph of a complex algebraic 
group operation. We show that, just as coarse general position in the 
Elekes-Szabó configuration characterises when the group is abelian-by-finite, 
a suitably weakened general position assumption characterises when it is 
nilpotent-by-finite.
Specifically, we prove Theorem~\ref{t:main-comb}.

\begin{definition} \label{defn:tauwgp}
  Let $W$ be a complex variety.
  Let $\alpha,\tau \in \N$.
  A finite subset $X \subset W$ is in \defn{weak $(\alpha,\tau)$-general position} 
  in $W$ if $|X| \geq \alpha$ and for any proper irreducible complex closed 
  subvariety $W' \subsetneq W$ of complexity at most $\alpha$,
  we have $|W' \cap X| \leq |X|^{1-\frac 1\tau}$.
\end{definition}

\begin{theorem}\label{t:main-comb}
  Let $G$ be a complex connected algebraic group.
  Then the following are equivalent.
  \begin{enumerate}[(I)]
    \item $G$ is nilpotent;
    \item The graph $\Gamma_G$ of the group operation is \emph{wgp-coherent}:
      For all $\epsilon > 0$ and $\alpha \in \N$
      there exist $\tau\in\N$
      and finite subsets $A,B,C \subseteq G$
      which are in weak $(\alpha,\tau)$-general position in $G$
      such that 
       $|\{(a,b,ab) \in A\times B\times C\}| \geq \max(|A|,|B|,|C|)^{2-\epsilon}$.
  \end{enumerate}
\end{theorem}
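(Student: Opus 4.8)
The two implications are of opposite character: (I)$\Rightarrow$(II) asks us to \emph{build} configurations, while (II)$\Rightarrow$(I) asks us to \emph{obstruct} them, and I would prove them by quite different means.

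For (I)$\Rightarrow$(II), fix $\epsilon>0$ and $\alpha\in\N$. I take $A$, $B$, $C$ to be word-metric balls in a \emph{generic} finitely generated Zariski-dense subgroup $\Gamma\le G$. Since $\Gamma$ is a subgroup of a nilpotent group it is nilpotent, so by the Bass--Guivarc'h formula its balls satisfy $|B_n|\sim c_0n^d$; then $B_nB_n\subseteq B_{2n}$ and $|B_{2n}|\le c_1|B_n|$, so $A=B=B_n$ and $C=B_{2n}$ give energy $|B_n|^2\ge\max(|A|,|B|,|C|)^{2-\epsilon}$ once $n$ is large. The real work is weak general position: for every proper irreducible closed $W'\subsetneq G$ of complexity at most $\alpha$ one needs $|B_n\cap W'|\le|B_n|^{1-1/\tau}$. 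Here the advertised Mordell--Lang statement is essential rather than merely convenient, since for non-semiabelian commutative $G$ (already $G=\mathbb{G}_a^2$) the naive Mordell--Lang conclusion fails for special $\Gamma$ such as $\mathbb{Z}^2$; but for a \emph{generic} finitely generated $\Gamma$ one shows --- plausibly via o-minimal point-counting --- that $\Gamma\cap W'$ lies in a bounded (in terms of $\alpha$) number of cosets of subgroups of $\Gamma$ whose Zariski closures are proper in $G$. As $\Gamma$ is Zariski-dense, any such subgroup has rank strictly below $\operatorname{rank}\Gamma$, hence growth degree $d'<d$, uniformly over the (bounded) family of complexity-$\le\alpha$ subvarieties, so $|B_n\cap W'|=O(n^{d_1})$ with $d_1<d$ and $\tau=\lceil d/(d-d_1)\rceil$ works for $n$ large. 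For general nilpotent $G$ I would induct on $\dim G$ along a central extension $1\to Z\to G\to G/Z\to1$ with $Z$ positive-dimensional and central: a complexity-$\le\alpha$ subvariety $W'$ either lies over a proper subvariety of $G/Z$, where the inductive hypothesis applies after multiplying by the size of a $Z$-fibre, or maps dominantly to $G/Z$, in which case its fibres are proper subvarieties of cosets of $Z$ and the commutative case applied to $Z$ handles them; in both branches one uses superadditivity of growth degrees in central extensions, $d(\Gamma)\ge d\big(\Gamma/(\Gamma\cap Z)\big)+d(\Gamma\cap Z)$, which leaves enough slack.

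For (II)$\Rightarrow$(I) I argue the contrapositive: a connected non-nilpotent $G$ is not wgp-coherent. Structure theory produces a witness to non-nilpotence: after passing to a Borel subgroup when $G$ is non-solvable and writing a connected solvable group as $T\ltimes U$ with $T$ acting non-trivially on $U$, one extracts an element $g\in G$ normalising a one-dimensional connected unipotent subgroup $N\cong\mathbb{G}_a$ on which conjugation by $g$ acts by a scalar $\ne1$ (an $\operatorname{Aff}_1=\mathbb{G}_m\ltimes\mathbb{G}_a$-configuration inside $G$). Now suppose $A,B,C$ are in weak $(\alpha,\tau)$-general position with energy $\ge\max^{2-\epsilon_0}$, where $\epsilon_0$ and $\alpha$ depend only on $G$ (with $\alpha$ at least a constant $\alpha_0(G)$ bounding the complexities of proper algebraic subgroups of $G$), and where $\tau$ is arbitrary. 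A Cauchy--Schwarz / pivoting step produces $\gg|C|^{2-O(\epsilon_0)}$ structured quadruples recording coincidences $c\,h=c'$ within $C$; because conjugation by $g$ is non-unipotent on $N$, these coincidences force a large subset of $C$ to be approximately invariant under a non-trivial algebraic family of transformations built from $g$ and $N$, and a Szemerédi--Trotter-type incidence estimate then concentrates $C$ on a proper subvariety of complexity at most $\alpha$ --- contradicting weak general position. (Equivalently, one may run the standard Elekes-Szabó-for-groups route: pass to an ultraproduct of configurations with $\epsilon\to0$, $\alpha\to\infty$, obtaining a ``wide'' approximate-subgroup configuration in ${}^*G$ to which Hrushovski's stabilizer theorem attaches a subgroup; wideness together with the witness prevents that subgroup --- hence $G$ --- from being non-nilpotent.)

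I expect the main obstacle to be the Mordell--Lang input for (I)$\Rightarrow$(II): both establishing, for a generic finitely generated subgroup of a commutative complex algebraic group, quantitative control on intersections with proper subvarieties that is uniform over a bounded family, and carrying it through the central-series induction while keeping the relevant sub- and quotient groups generic enough. For (II)$\Rightarrow$(I) the delicate point is instead quantitative robustness: the argument must reach a contradiction from the hypothesis $|W'\cap X|\le|X|^{1-1/\tau}$ with \emph{no} control on $\tau$, so it may afford to lose only an $\epsilon$-power, which is exactly what the ``expansion'' in the non-unipotent conjugation buys.
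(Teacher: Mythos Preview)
Your high-level architecture matches the paper's, and your parenthetical alternative for (II)$\Rightarrow$(I) is essentially what the paper does. But the primary routes you propose for each direction diverge from the paper in ways that matter.

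For (II)$\Rightarrow$(I), the paper translates wgp-coherence into the pseudofinite language (a broad internal wgp subset of $G(\C^\U)$), applies Balog--Szemer\'edi--Gowers--Tao to obtain a coarse approximate subgroup $X$ which inherits \emph{homomorphic} weak general position (its image in every proper quotient $G/N$ is broad), passes to a $\bigwedge$-internal subgroup, and then runs the Breuillard--Green--Tao argument: reduce to linear via $G/Z(G)$, to solvable via the semisimple quotient and Jordan/Larsen--Pink, then to nilpotent by embedding in upper-triangular matrices and inducting on $n$. The $\G_m\ltimes\G_a$ structure you isolate does appear in this last step, but only \emph{after} BSGT has produced an hwgp $X$, and it is exploited via sum--product (a broad $\bigwedge$-internal multiplicative subgroup cannot act on a nontrivial broad additive one) rather than an incidence bound. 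Your primary route has a gap precisely here: you fix $g$ and $N$ in advance and then assert that a Szemer\'edi--Trotter-type estimate forces $C$ to concentrate on a bounded-complexity subvariety, but there is no mechanism linking your fixed witness to the arbitrary sets $A,B,C$, and the claim that the loss is only an $\epsilon$-power (which is exactly what you need, since $\tau$ is uncontrolled) is asserted rather than argued. The hwgp formulation is what makes the link: it says the approximate subgroup itself surjects broadly onto the $\G_m$-factor, which is what feeds into sum--product.

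For (I)$\Rightarrow$(II), the paper's strategy is close to yours in outline but the tools differ substantially. The Mordell--Lang input is \emph{not} obtained by o-minimal point-counting; the paper adapts Hrushovski's $\mathrm{DCF}_0$ proof, using the logarithmic derivative, the socle of the semiabelian quotient over the constants, and a stabiliser argument. The conclusion is also weaker than your ``bounded number of cosets of subgroups'': for irreducible $W$ with $\Gamma\cap W$ Zariski-dense one gets only $W=G_0+W'$ with $G_0$ the almost-semiabelian part and $W'$ in the vector part $V_0$, which nonetheless suffices for $|W\cap\Gamma_\nu|\le C\nu^{\,r-\operatorname{codim}W}$ once $r\ge\dim V_0$. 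For the nilpotent step the paper works not with word-balls but with \emph{nilboxes} in the Lie algebra (which mutually control nilprogressions), and inducts on nilpotency class along $G_{n-1}\trianglelefteq G$: a proper $W$ either has proper image in $G/G_{n-1}$ (inductive hypothesis) or has small fibres over $G_{n-1}$-cosets (commutative case in $G_{n-1}$, using that suitable products of iterated commutators of the generic generators are independent generics of $G_{n-1}$). Carrying this out requires a rather delicate rebasing of the nilbox so that each fibre decomposes as a direct sum containing a wgp progression in $G_{n-1}$; your central-extension sketch gestures at this but underestimates the bookkeeping needed to keep the relevant pieces generic and the sums direct.
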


As mentioned above, the analogous statement with coarse general position and 
commutativity was observed in \cite[Corollary~3.15]{BB-cohMod}, and the $(II) 
\Rightarrow (I)$ direction of this abelian case was previously obtained in 
\cite{Breuillard-Wang}. Our proof of this direction of 
Theorem~\ref{t:main-comb} is along similar lines, and was heavily inspired by 
the ideas of \cite{Breuillard-Wang}. Briefly, the idea in both cases is to 
apply the Balog-Szemeredi-Gowers-Tao theorem to reduce to considering the 
existence of suitably approximate subgroups in suitably general position, and 
then apply a variation on \cite{BGT-lin} to obtain nilpotence or commutativity 
of the group. This is treated in Section~\ref{s:wgp}.
We also observe in Proposition~\ref{p:sLPgpAb} that the commutative case goes 
through when coarse general position is weakened to a certain 
condition related to a result of Larsen and Pink, intermediate between coarse 
and weak general position.

For $(I) \Rightarrow (II)$, we will see that we can take $A=B=C$ to be the 
approximate subgroup given by a nilprogression generated by an $l$-tuple of 
independent generics of $G$, for a sufficiently large $l$ depending only on 
$G$. The difficulty is to see that such nilprogressions have small 
intersection with proper subvarieties.

We first prove this in Section~\ref{s:vectorialML} in the case that $G$ is 
commutative (where a nilprogression is just a generalised arithmetic 
progression). For $G$ a semiabelian variety, it is a consequence of the truth 
of the Mordell-Lang conjecture that this holds with $l=1$. For $G = \G_a^n$ 
(where $\G_a$ denotes the additive group over $\C$), it is straightforward to 
see that it holds with $l=n$. A general commutative algebraic group in 
characteristic 0 can be seen as an extension of a semiabelian variety by some 
$\G_a^n$. Although the general case does not just follow from the two extreme 
cases, we obtain it by adapting techniques from Hrushovski's proof  
\cite{Hr-ML} of Mordell-Lang for function fields.

In Section~\ref{s:nilproWGP}, we extend this inductively to nilpotent 
algebraic groups. Let us sketch the method in the case of a group $G$ of 
nilpotency class 2.
If we take independent generic generators $g_1,...,g_{2n}$, the intersection 
of the nilprogression $P$ with the derived subgroup $G'$ will contain the 
arithmetic progression generated by the commutators 
$[g_1,g_2],...,[g_{2n-1},g_{2n}]$. It may be that these commutators are not 
themselves generic in $G'$, but if $w$ is the commutator width of $G$ then 
products of $w$ of them will be independent generics. Moreover, the fibres of 
the quotient $P/G'$ will essentially be unions of translates of such 
arithmetic progressions. Replacing the nilprogression with a ``nilbox'' as in 
\cite{BG09} and applying further transformations, we will reduce to the case 
that the fibres are literally of this form. Meanwhile, $P/G'$ is itself an 
arithmetic progression in independent generators $g_i/G'$ in the commutative 
group $G/G'$. Applying the commutative case of Section~\ref{s:vectorialML} to 
$G'$ and to $G/G'$ will allow us to conclude.

In fact, we work throughout with pseudofinite sets in the style of 
\cite{Hr-psfDims}, as explained in Section~\ref{s:setup}. This allows us to 
deal more explicitly with the structure behind the combinatorics.

\subsection{Setup}\label{s:setup}
Let $\U$ be a non-principal ultrafilter on $\N$.
For a set $S$ the \defn{internal} subsets of the ultrapower $S^\U$ are
the elements of $\P(S)^\U \subseteq \P(S^\U)$.
The finite cardinality function $\P(S) \to \N \cup \{\infty\}$ induces the non-standard finite cardinality function, which assigns to every internal subset $X=\lim_{i \to \U} X_i\subseteq S^\U$ the value $|X|:= \lim_{i\to \U} |X_i|\in (\N\cup \{\infty\})^\U=\N^\U\cup\{\infty\}$.
We fix some $\xi \in \R^\U$ greater than any standard real,
and for $\alpha \in \R^\U_{\geq 0}$ we set $\bdl(\alpha) = \bdl_\xi(\alpha) := 
\st \log_\xi \alpha \in \R \cup \{-\infty,\infty\}$,
where $\st : \R^\U \cup \{-\infty,\infty\} \to \R \cup \{-\infty,\infty\}$ is 
the standard part map, and $\log_{\xi} \alpha=\lim_{i\to \U}\log_{\xi_i}\alpha_i$ for $\xi=\lim_{i\to \U}\xi_i$ and $\alpha=\lim_{i\to \U} \alpha_i$.
We set $\bdl(X) := \bdl(|X|)$.

A \defn{$\bigwedge$-internal} subset of $S^\U$ is the intersection $\bigcap_{i 
\in \omega} X_i$ of
countably many internal subsets $X_i$. We define $\bdl(\bigcap_{i \in \omega} 
X_i) := \inf_{n \in \omega} \bdl(\bigcap_{i<n} X_i)$.

Following Wagner \cite{W-dimGroupField}, we say a $\bigwedge$-internal set $X$ 
is \defn{broad} if $0 < \bdl(X) < \infty$.

If $S_i$ are arbitrary sets, their ultraproduct $\prod_{i \to \U} S_i$ is an
internal subset of $(\bigcup_{i \in \N} S_i)^\U$, so the above definitions
also apply to such ultraproducts.

We will use the $\aleph_1$-compactness of ultraproducts over $\U$, which means 
that a countable set of internal sets has non-empty intersection if any finite 
subset has. This applies not only to internal subsets of $S^\U$, but also to 
internal subsets of the set $\P(S)^\U$ of internal subsets of $S^\U$, and so 
on.
This implies in particular that any $\bigwedge$-internal subset $X = 
\bigwedge_i X_i$ of $S^\U$ contains an internal subset of the same $\bdl$.
Indeed, suppose $a := \bdl(X) \in \R$ (the case $a=\infty$ is similar).
Then finite intersections of
\[\{ \{ Z \in \P(S)^\U : |Z| \geq \xi^{a - \frac1m} \} : m \}
\cup \{ \{ Z \in \P(S)^\U : Z \subseteq X_i \} : i \}\]
are non-empty, as witnessed by $\bigcap_{i<n} X_i$, and we conclude by 
$\aleph_1$-compactness.

Let $\K := \C^\U$.

Unless otherwise specified, all algebraic varieties will be over $\K$, and we 
identify them with their sets of $\K$-rational points.

\section{Approximate subgroups in weak general position}\label{s:wgp}

In this section we prove the $(II) \Rightarrow (I)$ direction of 
Theorem~\ref{t:main-comb}, which we reformulate in pseudofinite terms and in 
more detail as Theorem~\ref{t:BGT-wgp}. The core of the argument is 
essentially a matter of \cite[Theorem~2.5]{BGT-lin}, but the situation is a 
little different, and we take the opportunity to obtain a suitable analogue of 
it in terms of $\bigwedge$-internal subgroups (Proposition~\ref{p:bgt}) as a 
consequence of \cite[Theorem~3.1]{Hr-psfDims} (which is itself a recasting of 
the arguments in \cite{BGT-lin}) and some further arguments taken from 
\cite{BGT-lin} and \cite{BG}. The relation to \cite{BGT-lin} is addressed 
further in Remark~\ref{r:bgtComp}. The other main ingredient is the 
Balog-Szemeredi-Gowers-Tao theorem \cite[5.4]{Tao-BSG}, which is behind 
Lemma~\ref{l:BSGT} below.

\subsection{Broad $\bigwedge$-definable subfields and subgroups}
We first recall some known consequences of the fact that $\K$ is an 
ultraproduct of characteristic 0 fields.

First, a version of the sum-product theorem in this context.
Write $\G_a$ and $\G_m$ for the additive and multiplicative algebraic groups 
(over $\K$).

\begin{fact} \label{f:sumProd}
  Suppose $A \leq  \G_a$ is a $\bigwedge$-internal subgroup with $\bdl(A) < 
  \infty$
  and $M \leq  \G_m$ is a broad $\bigwedge$-internal subgroup
  and $M\cdot A \subseteq  A$.
  Then $A$ is trivial.
\end{fact}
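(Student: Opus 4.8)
The plan is to show that if $A$ were nontrivial, then $A$ together with the ring generated by the multiplicative group $M$ acting on it would generate a broad $\bigwedge$-definable subfield of $\K$, which contradicts the fact that $\K=\C^\U$ is an ultraproduct of characteristic $0$ fields and hence, by the Erd\H{o}s--Szemer\'edi sum-product phenomenon in this pseudofinite setting, admits no broad proper $\bigwedge$-definable subfield (and $A$ being broad or $\bdl(A)=0$ are the only options once we localise). More precisely, assume $A\neq 0$; after translating we may pick a nonzero $a_0\in A$, and by scaling by $a_0^{-1}$ (using that $M$ acts by multiplication and $a_0^{-1}A$ is again a $\bigwedge$-internal subgroup of $\G_a$ closed under $M$) we may assume $1\in A$.

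First I would show $A$ is actually closed under multiplication, hence is a $\bigwedge$-internal subring of $\K$ containing $1$. The point is that $M\cdot A\subseteq A$, $M$ is broad, and $A$ is an additive group; a standard argument (as in \cite{Hr-psfDims} or the pseudofinite sum-product results) shows that the $\bigwedge$-internal group $A$ is closed under multiplication by the $\bigwedge$-internal \emph{ring} generated by $M\cup\{1\}$, and that this ring is broad because $M$ is. Concretely, $M$ broad means $\bdl(M)>0$, and by Pl\"unnecke--Ruzsa type estimates in $\N^\U$ the additive span of $M$ inside $\G_a$ has positive $\bdl$; since that additive span multiplies $A$ into $A$, and since products of elements of this span again multiply $A$ into $A$, we get a $\bigwedge$-internal subring $R\subseteq\K$ with $0<\bdl(R)$, $R\cdot A\subseteq A$, $1\in R\subseteq A$.

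Next I would upgrade $R$ to a subfield: once $R$ is a broad $\bigwedge$-internal subring of a field of characteristic $0$, one shows it is a field. This is precisely the content of the sum-product theorem in the pseudofinite/Hrushovski framework — a broad $\bigwedge$-internal subring of $\K$ is a subfield — which follows from \cite[Theorem~3.1 or its consequences]{Hr-psfDims} together with the characteristic $0$ (in fact it suffices that the characteristic is large along $\U$). Then $A$ is an $R$-submodule of $\K$ containing $1$, hence contains the field $R$, and in fact $A\supseteq R$ forces $A$ to be closed under the field operations up to the additive-group structure; but a nonzero $R$-subspace of $\K$ with $\bdl<\infty$, when $\bdl(R)>0$, must be all of $\K$, contradicting $\bdl(A)<\infty=\bdl(\K)$. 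Hence $A=0$.

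The main obstacle I anticipate is the first upgrade: getting from ``$M$ broad and $M\cdot A\subseteq A$'' to a genuinely broad $\bigwedge$-internal \emph{subring} closed on $A$, i.e. controlling the $\bdl$ of iterated sums and products of $M$ without losing broadness and without the support escaping the locus where it still stabilises $A$. This is exactly where the sum-product input is doing real work: additively $M+M$ could be much larger than $M$, and one must play the additive and multiplicative growth against each other, in the style of \cite{Tao-BSG} and \cite{Hr-psfDims}, to extract a broad subring. Once that is in hand, the passage subring~$\to$~subfield and the final ``$R$-subspace of finite $\bdl$ is everything'' step are routine given the stated background.
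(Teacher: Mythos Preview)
Your strategy --- build from $M$ a subring $R$ acting on $A$, upgrade to a broad subfield of $\K$, and contradict characteristic $0$ via sum-product --- is the paper's strategy, and you correctly flag the ring step as the crux. But you do not close it. The ring generated by $M$ is the countable \emph{increasing union} $R=\bigcup_n R_n$ with $R_n=\sum_{i\leq n}(M-M)$; this is $\bigvee$-internal, not $\bigwedge$-internal, so the sum-product statements you invoke (which are for internal or $\bigwedge$-internal objects) do not apply to $R$ as constructed, and ``play additive against multiplicative growth'' is not yet an argument. Your endgame is also off: it is not true that a nonzero $F$-subspace of $\K$ with finite $\bdl$ must be all of $\K$ (take the subspace $F$ itself); the contradiction comes directly from the fact that a broad internal subfield of $\C^\U$ would be an ultraproduct of finite subfields of $\C$, which do not exist.

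The missing idea is to use $\bdl(A)<\infty$ not at the end but at the ring step. Since $R\cdot a\subseteq A$ for any nonzero $a\in A$, the injection $r\mapsto ra$ gives $\bdl(R)=\sup_n\bdl(R_n)\leq\bdl(A)<\infty$. Now choose $m$ with $2\bdl(R_m)>\bdl(R)$ and set $X:=R_m$, a single $\bigwedge$-internal set. For any nonzero $r,s\in R$ the map $(x,y)\mapsto rx+sy$ from $X^2$ into $R$ cannot be injective by this $\bdl$ inequality, and a collision yields $s/r\in\frac{X-X}{X-X}$. Hence the entire fraction field of $R$ equals $\frac{X-X}{X-X}$, which is broad and $\bigwedge$-internal because it is built from the single $X$; now sum-product applies cleanly and gives the contradiction.
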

\begin{proof}
  We give the special case for this setting of 
  \cite[Corollary~3.10]{W-dimGroupField}.

  Let $M' := M - M$ and let $R_n := \sum_{i=1}^n M'$, so $R := \bigcup_{n < 
  \omega} R_n$ is a subring of $\K$.
  Suppose $A$ is non-trivial, say $a \in A \setminus \{0\}$.
  Then $r \mapsto  ra$ is an internal injection $R \rightarrow  A$,
  so $\bdl(R) := \sup_n \bdl(R_n) \leq  \bdl(A) < \infty$, so say $m \in 
  \omega$ is such that $2\bdl(R_m) > \bdl(R)$. Let $X := R_m$.
  Then for $r,s \in R \setminus \{0\}$, the map $X^2 \rightarrow  R$ given by 
  $(x,y) \mapsto  rx+sy$ is not injective, so we have some equality $rx+sy = 
  rx'+sy'$ with say $y\neq y'$, so $\frac sr = \frac{x-x'}{y'-y}$. So we find 
  that $\frac{X-X}{X-X}$ is the fraction field of $R$. So we have a broad 
  $\bigwedge$-internal subfield of $\K$, which by sum-product 
  (\cite[Proposition~3.1]{W-dimGroupField} or \cite[Example~3.5]{Hr-psfDims}) 
  is internal. But $\K = \C^\U$, and $\C$ contains no finite subfield.
\end{proof}

Next, a version of \cite[Proposition~6.2]{BGT-lin}.
\begin{fact}\label{simple_case}
  A simple linear algebraic group $G$ over $\K$ has no $\bigwedge$-internal 
  broad Zariski-dense subgroup.
\end{fact}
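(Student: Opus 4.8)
The plan is to reduce to the one-dimensional case already handled by Fact~\ref{f:sumProd}, by exploiting the adjoint action of a torus on root subgroups. Suppose for contradiction that $H \leq G$ is a $\bigwedge$-internal broad Zariski-dense subgroup of a simple linear algebraic group $G$ over $\K$. First I would fix a maximal torus $T \leq G$ and a root $\chi$, with corresponding root subgroup $U_\chi \cong \G_a$; conjugation by $t \in T$ acts on $U_\chi$ by the character $\chi(t) \in \G_m$. The rough idea is that since $H$ is Zariski-dense, it should interact nontrivially with both $T$ and $U_\chi$, and the commutator structure will produce a broad $\bigwedge$-internal subgroup $A \leq U_\chi \cong \G_a$ which is invariant under multiplication by a broad $\bigwedge$-internal subgroup $M \leq \G_m$ coming from the image of $\chi$ on $H \cap T$; then Fact~\ref{f:sumProd} forces $A$ trivial, and unwinding this across all roots forces $H$ to be central-by-something, contradicting density.

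More carefully, the key steps I would carry out are: (1) Show that one may assume $H \cap U_\chi$ and $H \cap T$ are ``large'' — i.e.\ that after passing to a suitable translate or using the Zariski density together with the Bruhat-type decomposition $G = \bigcup_w U^- T w U$, the set $H$ meets the big cell in a broad set, and push this broadness onto the factors $T$ and $U_\chi$ using that a broad internal set cannot be covered by its intersections with proper subvarieties (the $\aleph_1$-compactness argument from Section~\ref{s:setup} showing $\bigwedge$-internal broad sets contain internal subsets of the same $\bdl$ is useful here). (2) From $H \cap T$ broad (or at least $\bdl(H \cap T) > 0$ after the reduction), obtain $M := \chi(H \cap T) \leq \G_m$ a $\bigwedge$-internal subgroup; the subtlety is ensuring $M$ is broad, i.e.\ $\bdl(M) > 0$, which needs that $\chi$ does not collapse all of $H \cap T$ to a bounded set — this can fail for a single $\chi$ but not for all simple roots simultaneously, since $\bigcap_\chi \ker\chi$ is finite. (3) Set $A := H \cap U_\chi \leq \G_a$, which is $\bigwedge$-internal with $\bdl(A) < \infty$. (4) Replace $A$ by $\sum$ of finitely many translates / the subgroup generated by $\bigcup_{m \in M} m \cdot A$ to arrange $M \cdot A \subseteq A$ while keeping $\bdl$ finite — here one uses that $M$-conjugates of $H \cap U_\chi$ lie in $H \cap U_\chi$ up to the subgroup generated, and $\bdl$ of the generated subgroup is still finite because everything sits inside the commutator calculus of $H$. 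Then Fact~\ref{f:sumProd} gives $A$ trivial, i.e.\ $H \cap U_\chi$ is trivial for every root $\chi$.

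Finally I would conclude: if $H$ meets every root subgroup $U_\chi$ trivially (and, by a parallel or easier argument, meets $T$ in a bounded set), then $H$ is contained in a bounded union of proper subvarieties, hence in a proper subvariety since $G$ is irreducible — in fact $H$ is contained in the normalizer of $T$ or in a bounded set, contradicting Zariski density of a broad, hence infinite, subgroup. The main obstacle I anticipate is step (1)–(2): arranging that the relevant intersections with $T$ and a well-chosen root subgroup are simultaneously broad, rather than hiding in a lower-dimensional ``corner'' of $G$ where $\bdl$ degenerates. This is precisely the point where one expects to invoke \cite[Proposition~6.2]{BGT-lin} directly — transplanting its argument (that a Zariski-dense approximate subgroup of a simple group cannot have ``small doubling'' relative to its Zariski closure) into the $\bigwedge$-internal broad setting, using Proposition~\ref{p:bgt} in place of the Breuillard-Green-Tao structure theorem.
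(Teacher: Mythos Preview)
Your approach is fundamentally different from the paper's, and the gap you yourself flag in steps (1)--(2) is genuine and, as stated, unresolved. The decomposition $g = u^- t w u$ on the big cell is not a group homomorphism, so even if $H$ meets the big cell in a broad set, you cannot ``push broadness onto the factors'' and obtain $\bigwedge$-internal \emph{subgroups} of $T$ and $U_\chi$; you get broad \emph{subsets}, but then Fact~\ref{f:sumProd} does not apply. Nor is there any reason for $H \cap T$ or $H \cap U_\chi$ to be broad: Zariski density of $H$ in $G$ says nothing about the $\bdl$ of its intersection with a proper subvariety. Finally, your closing suggestion to invoke Proposition~\ref{p:bgt} is circular, since that proposition is proved \emph{after} Fact~\ref{simple_case} and uses it (via Fact~\ref{decomposition} and the reduction to the simple case).

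The paper's proof is three lines and bypasses all of this structure theory. It first applies \cite[Theorem~3.1]{Hr-psfDims} to pass from the $\bigwedge$-internal broad dense $H$ to an \emph{internal} broad dense subgroup $H_1 \leq G$; this is a pseudofinite (hence locally finite in the ultraproduct sense) subgroup. Then Jordan's theorem in the Larsen--Pink form \cite[Theorem~0.1]{LP} gives that $H_1$ is abelian-by-finite, and Zariski density transfers this to $G$, contradicting simplicity of an infinite group. The key move you are missing is this reduction to an internal subgroup, after which classical finite-group results apply directly; there is no need to analyse root subgroups at all.
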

\begin{proof}
  If $H$ is such a subgroup, then by \cite[Theorem 3.1]{Hr-psfDims} there is a 
  broad internal dense subgroup $H_1\leq G$. By Jordan's theorem 
  \cite[Theorem~0.1]{LP}, $H_1$ has an abelian subgroup of finite index, and 
  then by density so does $G$.
  But $G$ is infinite since $H$ is broad, so this contradicts simplicity.
\end{proof}

\subsection{Broad subgroups in homomorphic weak position}
\begin{definition}
  If $G$ is an algebraic group over $\K$,
  and $X \subseteq G(\K)$ is a broad $\bigwedge$-internal subset,
  say $X$ is in \defn{homomorphic weak general position} (\defn{hwgp}) in $G$
  if $X/N$ is broad for any proper normal algebraic subgroup $N \triangleleft 
  G$.
\end{definition}
\begin{remark}
  If $X \subseteq G(\K)$ is hwgp in $G$ and $\theta : G \twoheadrightarrow H$ 
  is a surjective homomorphism of algebraic groups, then $\theta(X)$ is hwgp 
  in $H(\K)$.
\end{remark}

Let $G$ be an algebraic group. The \emph{radical} $\rad(G)$ of $G$ is the 
largest solvable connected algebraic subgroup of $G$.
$G$ is \emph{semisimple} if $\rad(G)$ is trivial, and \emph{almost simple} 
if $G/Z(G)$ is simple.

\begin{fact}[{\cite[21.51]{Milne-algGrps}}]\label{decomposition}
  Let $G$ be a connected semisimple linear algebraic group. Then there are 
  almost simple algebraic groups $G_1,\dots,G_k$ and an epimorphism $\pi: 
  G_1\times\dots\times G_k\to G$ with finite kernel.
\end{fact}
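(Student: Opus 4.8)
The plan is to reproduce the classical argument behind the cited \cite[21.51]{Milne-algGrps}: realise $G$ as an almost-direct product of its (finitely many) minimal nontrivial connected normal algebraic subgroups $G_1,\dots,G_k$, and check that the multiplication morphism $\pi\colon G_1\times\dots\times G_k\to G$ is an epimorphism with finite kernel.

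Since we are in characteristic $0$ (as $\K=\C^\U$), I would run this through Lie algebras. The Lie algebra $\mathfrak g:=\operatorname{Lie}(G)$ is semisimple, hence a finite direct sum $\mathfrak g=\bigoplus_i\mathfrak g_i$ of simple ideals; since every automorphism of $\mathfrak g$ permutes the $\mathfrak g_i$ and $G$ is connected, $\operatorname{Ad}(G)$ fixes each $\mathfrak g_i$, so the connected subgroup $G_i\le G$ with $\operatorname{Lie}(G_i)=\mathfrak g_i$ is normal in $G$ (and these $G_i$ turn out to be exactly the minimal connected normal subgroups). One then checks: each $G_i$ is almost simple, since a proper normal algebraic subgroup has Lie algebra a proper ideal of the simple $\mathfrak g_i$ and so is finite, hence central, while $Z(G_i)$ is finite because $\mathfrak z(\mathfrak g_i)=0$; the $G_i$ pairwise commute, as $[\mathfrak g_i,\mathfrak g_j]=0$ for $i\neq j$ and the $G_i$ are connected; and $G_i\cap G_j$ is finite for $i\neq j$, having trivial Lie algebra. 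Hence $\pi$ is a homomorphism and $d\pi\colon\bigoplus_i\mathfrak g_i\to\mathfrak g$ is an isomorphism, so $\pi$ is dominant onto a closed connected subgroup of full dimension, i.e.\ onto $G$, and $\ker\pi$ is finite; this is the asserted isogeny.

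The step I expect to be the real obstacle --- and the reason it is cleanest simply to cite Milne rather than prove this here --- is surjectivity of $\pi$ in arbitrary characteristic, where the Lie-algebra dictionary used above is no longer available. One would then define $G_1,\dots,G_k$ as the minimal connected normal subgroups directly; commutativity between distinct $G_i$ and finiteness of $G_i\cap G_j$ still follow formally, since $[G_i,G_j]$ is a connected normal subgroup of $G$ contained in $G_i\cap G_j$ and hence trivial by minimality; each $G_i$ is almost simple, e.g.\ by induction on $\dim G$, using that a connected group acts trivially on the finite set of minimal connected normal subgroups of any of its connected normal subgroups; and generation is obtained by showing that $C_G(G_1\cdots G_k)^{\circ}$ is trivial --- a minimal connected normal subgroup contained in it would centralise itself, hence be abelian, contradicting almost-simplicity --- which together with perfectness of connected semisimple groups forces $G_1\cdots G_k=G$. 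For this paper it is enough to invoke \cite[21.51]{Milne-algGrps} directly.
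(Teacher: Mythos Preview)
The paper does not prove this statement at all: it is recorded as a \emph{Fact} and simply attributed to \cite[21.51]{Milne-algGrps}, with no argument given. Your sketch is therefore strictly more than what the paper provides, and it is essentially the standard argument one finds behind that citation: in characteristic $0$ (which is all that is needed here, since $\K=\C^\U$), decompose $\operatorname{Lie}(G)$ into simple ideals and take the corresponding connected normal subgroups. The steps you list are correct, and your closing remark that for present purposes one may just invoke Milne matches exactly what the paper does.
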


\begin{proposition} \label{p:bgt}
  Let $G$ be a connected algebraic group over $\K$.
  Let $H \leq G(\K)$ be a broad hwgp Zariski-dense $\bigwedge$-internal 
  subgroup.
  Then $G$ is nilpotent.
\end{proposition}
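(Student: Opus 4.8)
The plan is to prove nilpotence of $G$ by reducing, via standard structure theory of connected algebraic groups in characteristic $0$, to cases that can be dispatched using the facts collected above: Fact~\ref{f:sumProd} (sum-product), Fact~\ref{simple_case} (no broad dense subgroup of a simple linear group), and Proposition~\ref{p:bgt}'s own inputs (Jordan/Larsen--Pink via \cite{Hr-psfDims}, and the recasting of \cite{BGT-lin}). First I would observe that the hypothesis is inherited by quotients: if $\theta : G \twoheadrightarrow \bar G$ is a surjective homomorphism with $\bar G$ connected, then $\theta(H) \leq \bar G(\K)$ is again broad, Zariski-dense, $\bigwedge$-internal, and hwgp (the last by the Remark following the definition of hwgp). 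Dually, broadness of $H/N$ for proper normal $N$ means that we get ``enough room'' on both the quotient and, by a counting/fibre argument, on $H \cap N$.

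The core dichotomy is linear versus non-linear, handled by Chevalley's theorem: there is a unique connected normal linear algebraic subgroup $G_{\mathrm{lin}} \triangleleft G$ with $G/G_{\mathrm{lin}}$ an abelian variety. The plan is first to show $G_{\mathrm{lin}} = G$ is harmless to assume by an induction, or alternatively to handle the abelian variety quotient directly: an abelian variety $A$ is a divisible commutative group of finite Morley-rank-like behaviour, but more to the point, by the Mordell--Lang/Faltings input (or more simply, since $H$ hwgp and dense in $A$ forces $H/B$ broad for every proper abelian subvariety $B$), one reduces to showing that the only obstruction to nilpotence comes from the linear part. For the linear case $G$ linear connected, I would use the Levi-type decomposition $G = \rad(G) \rtimes S$ (a Levi subgroup $S$ semisimple, available in characteristic $0$): projecting $H$ to the semisimple quotient $G/\rad(G)$, Fact~\ref{decomposition} writes this (up to finite isogeny) as a product of almost simple groups, and pushing $H$ into any almost simple factor's simple quotient $G_i/Z(G_i)$ would produce a broad dense subgroup there, contradicting Fact~\ref{simple_case} — unless each such factor is trivial. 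Hence $G/\rad(G)$ is trivial, i.e. $G$ is solvable.

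So it remains to treat $G$ solvable connected linear and upgrade solvable to nilpotent. Here is where the sum-product fact enters. A connected solvable linear group is $G = U \rtimes T$ with $U$ the unipotent radical and $T$ a maximal torus; nilpotence is equivalent to $T$ acting trivially on $U$, equivalently on each successive quotient of the descending central/derived-type filtration of $U$, whose subquotients are vector groups $\G_a^{d}$ on which $T$ acts via a direct sum of characters. If $G$ is not nilpotent, some such subquotient $\G_a^{d}$ carries a nontrivial character $\chi : T \to \G_m$ in the $T$-action, and the image of $H$ in the corresponding semidirect product $\G_a \rtimes_\chi \G_m$ (after choosing a suitable coordinate and composing with projections) gives a broad $\bigwedge$-internal additive subgroup $A \leq \G_a$ together with a broad $\bigwedge$-internal multiplicative subgroup $M \leq \G_m$ (broadness of both coming from hwgp applied to the kernels of the two coordinate projections, which are proper normal in the relevant quotient of $G$) satisfying $M \cdot A \subseteq A$; but Fact~\ref{f:sumProd} then forces $A$ trivial, contradicting broadness of $A$. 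Therefore $G$ is nilpotent.

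The main obstacle I expect is the bookkeeping in the reduction steps: one must check at each quotient that the relevant subgroup ($\rad(G)$, the unipotent radical, the kernel of a character, a subquotient vector group) is \emph{proper} and \emph{normal} in the group one is currently working in, so that hwgp genuinely supplies broadness of both the image and the kernel-intersection, and one must arrange the coordinates so that the action on a non-central subquotient literally presents as $\G_a \rtimes \G_m$ with $M \cdot A \subseteq A$ — handling the possibility that the character $\chi$ has nontrivial kernel in $T$, and that $H$'s image in $T$ need only be Zariski-dense rather than all of $T$, so that $M = \chi(\pi_T(H))$ is merely a broad dense subgroup of $\G_m$, which is exactly what Fact~\ref{f:sumProd} is designed to accept. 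Making the abelian-variety part of the argument clean (rather than invoking heavy machinery) is a secondary subtlety, but it can likely be folded into the same inductive scheme since an abelian variety is already nilpotent (commutative), so the only real work is in the linear solvable case.
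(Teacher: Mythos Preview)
Your overall architecture matches the paper's: reduce to linear, then show $G$ is solvable, then upgrade solvable to nilpotent via sum-product. The paper's reduction to the linear case is via Rosenlicht's theorem that $G/Z(G)$ is linear for any connected algebraic group; this is cleaner than your Chevalley route, since ``$G/Z(G)$ nilpotent $\Rightarrow G$ nilpotent'' is immediate, whereas ``$G_{\mathrm{lin}}$ nilpotent and $G/G_{\mathrm{lin}}$ an abelian variety $\Rightarrow G$ nilpotent'' is not automatic and you do not supply the argument (it is true, essentially because an abelian variety admits no nonconstant morphism to an affine group, so $G$ acts on $G_{\mathrm{lin}}$ through inner automorphisms of $G_{\mathrm{lin}}$; but this is work you have skipped). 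Step~I (solvability) is essentially the same in both.

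For the solvable-to-nilpotent step there is a genuine gap. In the quotient $\G_a \rtimes_\chi \G_m$ only \emph{one} coordinate projection is a homomorphism with normal kernel, namely the projection to $\G_m$ with kernel $\G_a$; the subgroup $\G_m$ is \emph{not} normal when $\chi$ is nontrivial, so there is no group projection to $\G_a$ and hwgp tells you nothing about $A := \bar H \cap \G_a$. Thus your claimed contradiction ``$A$ broad yet $A$ trivial by Fact~\ref{f:sumProd}'' does not fire as written. The repair is to obtain nontriviality of $A$ from Zariski-density rather than from hwgp: since $\bar H$ is Zariski-dense in $\G_a \rtimes \G_m$, its commutators are Zariski-dense in the derived subgroup $\G_a$, and these commutators lie in $\bar H \cap \G_a = A$; then Fact~\ref{f:sumProd} (which needs only $\bdl(A) < \infty$ and $M$ broad) gives the contradiction. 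The paper takes a different route for this step, following \cite{BG}: it embeds $G$ in $\Upp_n$ and shows by induction on $n$ that every length-$n$ iterated commutator of elements of $H$ vanishes, by observing that such a commutator lands in the top-right-corner copy of $\G_a$ and then splitting into the cases ``the diagonal character $\theta$ is nontrivial on $G$, so $\theta(H)\leq\G_m$ is broad and $H\cap N$ is trivial by sum-product'' versus ``$\theta$ is trivial on $G$, so $N$ is central and the commutator is $1$ directly''.
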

\begin{proof}
  First, we reduce to the case that $G$ is linear.
  By e.g.\ \cite[Theorem~13]{rosenlicht-algGroups}, $G/Z(G)$ is linear.
  If $Z(G)=G$, we are done.
  Else, $H/Z(G)$ is a broad hwgp Zariski-dense $\bigwedge$-internal subgroup of 
  $G/Z(G)$, and if $G/Z(G)$ is nilpotent then so is $G$.
  So we may assume that $G$ is linear, say $G \leq \GL_n$.

  \underline{Step I}: $G$ is solvable.

  Suppose not; then $\rad(G)$ is a proper normal algebraic subgroup of $G$, so 
  by hwgp $H/\rad(G)$ is broad. By Fact \ref{decomposition}, there are almost 
  simple algebraic groups $G_1,\dots,G_k$ and an epimorphism $\pi: 
  G_1\times\dots\times G_k \to G/\rad(G)$ with finite kernel.

  Put $H_1:=\pi^{-1}(H/\rad(G))$. As $\ker(\pi)$ is finite and $H$ is broad, 
  $H_1$ is broad as well. Now $H_1$ embeds into $\pi_1(H_1)\times\dots\times 
  \pi_k(H_1)$ (where $\pi_i:G_1\times\dots\times G_k\to G_i$ are the 
  projections) via $h\mapsto (\pi_1(h),\dots,\pi_k(h))$. Thus, by additivity 
  of $\bdl$ we have that $\pi_i(H_1)$ is broad for some $i$.
  Now $Z(G_i)$ is finite, since otherwise
  $\pi(0\times0 \times ... \times Z(G_i)\times ... \times 0 \times 0)$ is an
  infinite abelian normal subgroup of $G/\rad(G)$, contradicting triviality of
  $\rad(G/\rad(G))$.
  So $\pi_i(H_1)/Z(G_i)$ is also broad.

  Now $H_1$ is Zariski-dense in $\prod_i G_i$. Indeed,
  if $U \subseteq \prod_i G_i$ is Zariski-open and disjoint from $H_1$,
  then $\pi(U) \subseteq G/\rad(G)$ is disjoint from $H/\rad(G)$,
  but $\dim(\pi(U)) = \dim(U) = \dim(\prod_i G_i) = \dim(G/\rad(G))$,
  so this contradicts Zariski-density of $H/\rad(G)$.

  Hence $\pi_i(H_1)/Z(G_i)$ is Zariski-dense in $G_i/Z(G_i)$.
  As $G_i/Z(G_i)$ is simple and linear, this contradicts Fact 
  \ref{simple_case}.

  \underline{Step II}: $G$ is nilpotent.

  We proceed by translating the proof of \cite{BG}.
  By Maltsev, by conjugating by an element of $\GL_n$, we may assume that $G$ 
  is a subgroup of $\Upp_n$.

  We argue by induction on $n$.

  Let $\pi_1 : \Upp_n \rightarrow  \Upp_{n-1}$ be the homomorphism defined by 
  deleting the first row and first column,
  and $\pi_2 : \Upp_n \rightarrow  \Upp_{n-1}$ the homomorphism defined by 
  deleting the last row and last column.

  Let $h_1,\ldots ,h_n \in H$.
  Let $h := [h_1,h']$ where
  $h' := [h_2,\ldots ,[h_{n-1},h_n]\ldots ]$.
  We show $h=1$.

  Now for $i=1,2$ we argue that $\pi_i(G)$ is nilpotent of step $n-1$:
  if $\pi_i(G)=0$, this is clear, and otherwise since $H$ is hwgp, 
  $\pi_i(H) \leq  \pi_i(G)$ is a broad hwgp Zariski-dense 
  $\bigwedge$-internal subgroup, and we apply the inductive hypothesis.

  Hence both $h$ and $h'$ are in $N := \ker\pi_1 \cap \ker\pi_2 =
  \begin{pmatrix}
    1 & 0 & \ldots & 0 & * \\
    0 & 1 & \ldots & 0 & 0 \\
    \vdots & \vdots & \vdots & \vdots & \vdots \\
    0 & 0 & \ldots & 0 & 1
  \end{pmatrix}
  $.
  The conjugation action of $X := (x_{ij})_{ij}\in G$ on $N$ is by multiplication 
  in the top-right corner by $\theta(X) := \frac{x_{11}}{x_{nn}}$.
  Note that $\theta : G \rightarrow  \G_m$ is a homomorphism.
  Let $\alpha : N \cong \G_a$ be the isomorphism $\alpha((x_{ij})_{ij}) := 
  x_{1n}$.

  Suppose first that $\ker\theta$ is a proper subgroup of $G$.
  Then since $H$ is hwgp, $\theta(H)$ is a broad $\bigwedge$-internal 
  subgroup of $\G_m(\K)$.
  But $\theta(H)$ acts by multiplication on $\alpha(H \cap N)$ via 
  conjugation, so by Fact~\ref{f:sumProd}, $H\cap N$ is trivial, so $h=1$.

  It remains to consider the case $\ker\theta = G$.
  Then $G$ commutes with $N$, so in particular $h = [h_1,h'] = 1$.

  So we conclude that $H$ is nilpotent of step $n$, and then by Zariski 
  density so is $G$.
\end{proof}

\subsection{Coarse-approximate subgroups}
\begin{definition}
  Let $(G;\cdot)$ be an internal group (meaning that $G$ is an internal subset
  of some $S^\U$ and the graph of the group operation $\Gamma_G \subseteq G^3$ 
  is also internal), and let
  $X\subseteq G$ be internal. We call $X$ a \defn{coarsely approximate subgroup}  
  if $X=X^{-1}$, $\mathsf{1}\in X$, $X$ is broad, and $XX\subseteq KX$ where
  $K\subseteq G$ is internal with $\bdl(K) = 0$.
\end{definition}

\begin{proposition} \label{p:wedgify}
  If $X \subseteq G$ is a coarsely approximate subgroup, then there exists a
  $\bigwedge$-internal subgroup $H \supseteq X$ with $\bdl(H) = \bdl(X)$.
  Conversely, if $H \leq G$ is a broad $\bigwedge$-internal subgroup,
  then there exists a coarsely approximate subgroup $X \subseteq H$ with
  $\bdl(X) = \bdl(H)$.
\end{proposition}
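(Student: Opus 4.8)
The plan is to treat the two directions separately, using only the ultraproduct toolkit of \secref{setup} --- in particular overspill and the observation (recorded in \secref{setup}) that every $\bigwedge$-internal set contains an internal subset of the same $\bdl$.

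\emph{Coarse-approximate subgroup $\Rightarrow$ $\bigwedge$-internal subgroup.} Let $X$ be a coarse-approximate subgroup, put $a:=\bdl(X)$, and fix an internal $K$ with $\bdl(K)=0$ and $XX\subseteq KX$. Writing $X^n$ for the $n$-fold product set, an easy induction ($X^{n+1}=X^nX\subseteq K^{n-1}X^2\subseteq K^nX$) gives $X^n\subseteq K^{n-1}X$; hence, by submultiplicativity of cardinality on product sets, $\bdl(X^n)\le(n-1)\bdl(K)+\bdl(X)=a$ for every \emph{standard} $n$, and since $1\in X$ gives $X\subseteq X^n$ we get $\bdl(X^n)=a$ for all standard $n$. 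Although $\bdl(X^N)$ may be strictly larger than $a$ for nonstandard $N$, it will exceed $a$ only slightly provided $N$ is not too large: for each standard $m$ the internal set $\{n\in\N^\U:|X^n|<\xi^{a+\frac1m}\}$ is an initial segment of $\N^\U$ (as $n\mapsto|X^n|$ is non-decreasing) containing all standard naturals, so by overspill it has a nonstandard element. Choosing such elements $N_1\ge N_2\ge\dots$ recursively, small enough that $N_{m+1}\le\lfloor N_m/2\rfloor$, I will set $H:=\bigcap_m X^{N_m}$. This is $\bigwedge$-internal; it is symmetric and contains $1$ because each $X^{N_m}$ is (using $X=X^{-1}$ and $1\in X$), and it contains $X=X^1$ since each $N_m\ge1$; it is closed under multiplication because if $g,h\in H$ then $g,h\in X^{N_{m+1}}$, so $gh\in X^{2N_{m+1}}\subseteq X^{N_m}$, and this holds for all standard $m$. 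Finally $\bdl(H)=\inf_m\bdl(X^{N_m})\le\inf_m(a+\tfrac1m)=a$, while $X\subseteq H$ forces $\bdl(H)\ge\bdl(X)=a$.

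\emph{Broad $\bigwedge$-internal subgroup $\Rightarrow$ coarse-approximate subgroup.} Let $H\le G$ be broad $\bigwedge$-internal, $a:=\bdl(H)$. By \secref{setup} there is an internal $X_0\subseteq H$ with $\bdl(X_0)=a$; put $Y:=X_0\cup X_0^{-1}\cup\{1\}$, an internal symmetric subset of $H$ containing $1$, with $\bdl(Y)=a$. As $Y\subseteq H$ and $H$ is a subgroup, $Y^4\subseteq H$, so $\bdl(Y^4)=a=\bdl(Y)$, which means $|Y^4|\le k|Y|$ for some internal $k\in\N^\U$ with $\bdl(k)=0$. The Ruzsa covering lemma (a finitary statement, hence valid for internal sets by \L o\'s), applied to $A=Y^3$ and $B=Y$, yields an internal $F\subseteq Y^3$ with $|F|\le k$, so $\bdl(F)=0$, and $Y^3\subseteq FYY^{-1}=FY^2$. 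Now $X:=Y^2$ is internal, symmetric, contains $1$, is contained in $H$, and has $\bdl(X)=a$, so it is broad; moreover $XX=Y^4=Y^3Y\subseteq FY^2Y=FY^3\subseteq F^2Y^2=F^2X$ with $\bdl(F^2)=0$. Hence $X$ is a coarse-approximate subgroup contained in $H$ with $\bdl(X)=\bdl(H)$.

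\emph{Where the difficulty lies.} The subtle direction is the first: the subgroup $\langle X\rangle$ generated by $X$ need not be $\bigwedge$-internal, and the naive ``stabiliser'' of $X$ need not be a subgroup, so one cannot simply take $H=\langle X\rangle$. The overspill argument produces instead a $\bigwedge$-internal over-approximation of $\langle X\rangle$; the care is in choosing the nonstandard exponents $N_m$ to shrink fast enough (halving suffices) that $\bigcap_m X^{N_m}$ is closed under multiplication, while still having $\bdl$ descend exactly to $\bdl(X)$.
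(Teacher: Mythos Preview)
Your proof is correct and follows the same strategy as the paper's. For the first direction the paper writes down a single explicit nonstandard exponent $\nu=\lfloor\log_2(\log_{|K|}\xi)\rfloor$ and sets $Y_n:=X^{2^{\nu-n}}$, whereas you obtain the halving sequence $(N_m)$ by iterated overspill; these are interchangeable implementations of the same idea (a nested chain $X^{N_m}$ with $2N_{m+1}\le N_m$ and $\bdl(X^{N_m})\to\bdl(X)$). For the converse, the paper observes $\bdl(X^3)=\bdl(X)$ and cites \cite[Corollary~3.11]{Tao-BSG} to conclude that $X^3$ is a coarse-approximate subgroup, while you unpack that citation by applying the Ruzsa covering lemma directly to get $Y^2$; again the same content. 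One small point worth making explicit in your write-up: the recursive choice of $N_{m+1}$ nonstandard with $N_{m+1}\le\lfloor N_m/2\rfloor$ succeeds because $\lfloor N_m/2\rfloor$ is itself nonstandard, so the minimum of it and the overspill witness for stage $m+1$ works.
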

\begin{proof}
  Suppose $X$ is a coarsely approximate subgroup and $XX\subseteq KX$, where $\bdl(K)=0$.
  We can assume $\bdl(X) = 1$.
  Let $\nu := \floor{\log_2(\log_{|K|}\xi)} \in \N^\U$.
  Then $\nu > \N$ since $\bdl(K) = 0$, but $\bdl(|K|^{2^{\nu}}) \leq 1$. 
  For $n \in \N$, let $Y_n := X^{2^{\nu-n}}$.
  (Explicitly, if $X = \prod_{s \to \U} X_s$ and $\nu = \lim_{s \to \U} \nu_s$
  with $\nu_s \in \N$, then $Y_n = \prod_{s \to \U} X_s^{2^{\nu_s - n}}$.)
  Then $Y_nY_n = Y_{n-1}$, and $\bdl(Y_n) \leq \bdl(K^{2^{\nu-n}-1}X) \leq
  2^{-n} + 1$.
  So $H := \bigcap_n Y_n$ is as required.

  Suppose conversely that $H$ is a broad $\bigwedge$-internal subgroup. By $\aleph_1$-compactness (see Section~\ref{s:setup}), there is 
  an internal subset $X \subseteq H$ with $\bdl(X)=\bdl(H)$. We can assume 
  that $X$ is closed under inverse and contains the identity. Now $X^3 
  \subseteq H^3 = H$, so $\bdl(X^3) = \bdl(X)$. It follows by 
  \cite[Corollary~3.11]{Tao-BSG} that $X^3$ is a coarsely approximate subgroup.
\end{proof}

\renewcommand{\gg}{\mathfrak{g}}

%

\subsection{Weak general position}
\begin{definition}
  An internal subset $X$ of an irreducible algebraic variety $V$ is in 
  \defn{weak general position} (for short: $X$ is \defn{wgp}) in $V$ if for 
  any proper subvariety $W \subseteq  V$ (over $\K$), $\bdl(X \cap W) < \bdl(X)$.

  Given $\eta > 0$, we say $X$ is wgp with \defn{gap} $\eta$ if moreover
  $\bdl(X \cap W) \leq  \bdl(X) - \eta$ for every $W$. We say $X$ is \defn{wgp 
  with a gap} if this holds for some $\eta>0$.
\end{definition}
(See the proof of Theorem~\ref{t:main-comb} below for how this relates to the 
definition of weak $(\alpha,\tau)$-general position in the introduction.)

\begin{lemma} \label{l:wgp-hwgp}
  A broad internal wgp subset of an algebraic group is hwgp.
\end{lemma}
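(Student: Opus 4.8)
The plan is to show directly that weak general position of $X$ in $G$ gives broadness of the image $X/N$ for every proper normal algebraic subgroup $N \triangleleft G$, which is exactly the defining condition of hwgp. The key point is that for a surjective homomorphism $\pi : G \twoheadrightarrow G/N$ of algebraic groups, the fibres of $\pi$ are cosets of $N$, all of the same dimension $\dim N$, and a bound on fibre sizes translates additivity of $\bdl$ into a lower bound on $\bdl(X/N)$. Since $\bdl(X) < \infty$ is part of broadness and $N$ is infinite exactly when $N$ is positive-dimensional, the delicate case is when $\dim N > 0$; the case $N$ finite is trivial since then $\bdl(X/N) = \bdl(X)$.

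First I would fix a proper normal algebraic subgroup $N \triangleleft G$ and the quotient map $\pi : G \to G/N$. I want to bound the size of a generic-enough fibre of $\pi$ restricted to $X$. The natural move is: if \emph{every} fibre $X \cap gN$ had $\bdl$ equal to $\bdl(X)$, that would already contradict wgp, since each $gN$ is a translate of $N$ and hence (if $N$ is proper) a proper subvariety of $G$ — wait, $gN$ need not be irreducible or proper if $N = G$, but $N$ is proper so $gN \subsetneq G$ is a proper (possibly reducible) subvariety, and wgp as stated quantifies over proper subvarieties $W \subseteq V$, so $\bdl(X \cap gN) < \bdl(X)$ for each such coset. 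However, this pointwise statement is not enough on its own: one needs a \emph{uniform} gap to conclude $\bdl(X/N)>0$, because a priori $\bdl(X \cap gN)$ could approach $\bdl(X)$ as $g$ varies. Here is where the hypothesis that $X$ is wgp (not merely wgp with a gap — but note the lemma only assumes wgp) must be leveraged via compactness: the family of cosets $gN$ is parametrized by the variety $G/N$, and $\bigwedge$-internally one can stratify $X$ according to the $\bdl$ of the fibre through each point.

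Concretely, I would argue by contradiction: suppose $\bdl(X/N) = 0$, i.e.\ $|\pi(X)| \leq \xi^{o(1)}$ in the sense $\bdl(\pi(X)) = 0$. Then by pigeonhole (additivity of $\bdl$ over the internally-indexed partition of $X$ into its fibres over $\pi(X)$) there is some $g$ with $\bdl(X \cap gN) = \bdl(X)$. But $gN$ is a proper subvariety of $G$ — it lies in $\pi^{-1}(\pi(g)) $ which has dimension $\dim N < \dim G$ — so decomposing $gN$ into its finitely many irreducible components, one of them $W$ (a proper irreducible subvariety of $G$) has $\bdl(X \cap W) = \bdl(X)$, contradicting wgp. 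This pigeonhole step needs $\pi(X)$ internal (or at least that we may pass to an internal subset of full $\bdl$), which is fine since $X$ is internal and $\pi$ is a morphism, so $\pi(X)$ is internal; and the additivity $\bdl(X) = \bdl(\pi(X)) + \sup_{g \in \pi(X)} \bdl(X \cap gN)$ when the base has $\bdl = 0$ is exactly the statement that $\bdl$ of a union over a $\bdl$-null index set equals the sup of the $\bdl$'s, which follows from non-standard pigeonhole.

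The main obstacle I anticipate is the bookkeeping around reducibility: $N$ and its cosets are typically \emph{not} irreducible, whereas the wgp hypothesis is phrased for proper (in one formulation irreducible) subvarieties. The fix is routine — pass to irreducible components, of which there are boundedly (in fact standardly-)many since $N$ is a fixed algebraic group, so finite additivity of $\bdl$ over components preserves the contradiction. A secondary point to be careful about is that $X/N$ should be interpreted as $\pi(X) \subseteq (G/N)(\K)$, and that $G/N$ exists as an algebraic group (quotients by normal algebraic subgroups exist in this category), so ``$X/N$ broad'' is well-posed; one also needs $\bdl(X/N) \leq \bdl(X) < \infty$, which is immediate since $\pi$ restricted to $X$ is a surjection onto $\pi(X)$. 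Putting these together yields that $\pi(X)$ is broad for every proper normal $N$, which is precisely hwgp.
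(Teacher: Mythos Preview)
Your proof is correct and follows essentially the same contrapositive-plus-pigeonhole approach as the paper (the paper phrases the existence of a full-$\bdl$ fibre via $\aleph_1$-compactness rather than via the internally-achieved maximum over an internal index set, but this is the same idea). Your worry about irreducibility of the coset $gN$ is unnecessary: the paper's definition of wgp quantifies over \emph{all} proper subvarieties $W \subseteq G$, not just irreducible ones, so the contradiction is immediate once you have $\bdl(X \cap gN) = \bdl(X)$.
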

\begin{proof}
  Suppose $X \subseteq  G$ is not hwgp, so say $\bdl(X / H) = 0$ where $H < G$ is a 
  proper normal algebraic subgroup. Now for all $n \in \N$, we have $|X/H| < 
  \xi^{\frac1n}$ and hence, by the pigeonhole principle $ 
  |X \cap \gamma H| \geq  |X|\xi^{-\frac1n}$ for some $\gamma \in G$.
  Applying $\aleph_1$-compactness, we find $\gamma \in G$ with $\bdl(X \cap 
  \gamma H) = \bdl(X)$, contradicting wgp.
\end{proof}

\begin{definition}
  Let $G$ be a connected algebraic group over $\K$.

  Say the graph of the group operation $\Gamma_G$ is \defn{wgp-coherent}
  if there are broad internal subsets $A,B,C \subseteq  G$ such that $\bdl((A\times 
  B\times C) \cap \Gamma_G) = 2\bdl(A) = 2\bdl(B) = 2\bdl(C)$
  and $A,B,C$ are wgp in $G$.
\end{definition}

\begin{remark}
  We could also consider an \emph{a priori} weaker notion in which $A,B,C$ are 
  $\bigwedge$-internal rather than internal. However, an $\aleph_1$-compactness 
  argument shows that this is actually equivalent: take internal subsets of 
  $A,B,C$ with the same $\bdl$, hence retaining wgp, and such that the 
  intersection with $\Gamma_G$ also has the same $\bdl$.
\end{remark}

\begin{lemma} \label{l:BSGT}
  Let $G$ be a connected algebraic group over $\K$.
  Suppose $\Gamma_G$ is wgp-coherent.
  Then there exists an hwgp Zariski-dense coarsely approximate subgroup $X \subseteq  
  G$.
\end{lemma}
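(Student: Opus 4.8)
The plan is to feed the wgp-coherence data for $\Gamma_G$ into the non-abelian Balog-Szemeredi-Gowers-Tao theorem \cite[5.4]{Tao-BSG}, extract from $A$ a large subset $A'$ with small tripling, promote $A'$ to a coarse approximate subgroup $X$ by the non-commutative Ruzsa calculus of \cite[\S3]{Tao-BSG} (exactly as in Proposition~\ref{p:wedgify}), and then transfer weak general position from $A'$ to $X$. The one point requiring care is to use the version of Balog-Szemeredi-Gowers-Tao that produces genuine small tripling, since in a non-abelian group small doubling does not control longer products; everything else is bookkeeping in the $\bdl$-calculus.

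Concretely, let $A,B,C \subseteq G(\K)$ be broad internal subsets witnessing wgp-coherence and set $\delta := \bdl(A) = \bdl(B) = \bdl(C)$. The internal set $E := \{(a,b) \in A\times B : ab \in C\}$ is in internal bijection with $A\times B\times C \cap \Gamma_G$, so $\bdl(E) = 2\delta$. Put $N := \max(|A|,|B|)$ and $K := \max(N^2/|E|,\ |C|/N,\ 1)$, so that $\bdl(K) = 0$, while $|A|,|B| \le N$, $|E| \ge N^2/K$, and $\{ab : (a,b) \in E\} \subseteq C$ has cardinality at most $KN$. Applying \cite[5.4]{Tao-BSG} coordinatewise and passing to the ultraproduct of the outputs then yields an internal subset $A' \subseteq A$ with $|A'| \ge N/K^{O(1)}$, hence $\bdl(A') = \delta$, and with small tripling: $\bdl\bigl(A'(A')^{-1}A'\bigr) = \delta$.

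Next, fix $a_0 \in A'$ and set $X_0 := a_0^{-1}A' \cup (a_0^{-1}A')^{-1} \cup \{1\}$, an internal set satisfying $X_0 = X_0^{-1}$, $1 \in X_0$ and $\bdl(X_0) = \delta$. Since $a_0 \in A'$, any product of at most three elements of $X_0$ lies in a bounded-length alternating product of $A'$ and $(A')^{-1}$; by the non-commutative Ruzsa-type estimates of \cite[\S3]{Tao-BSG}, small tripling of $A'$ forces every such product to have $\bdl = \delta$, so $\bdl(X_0^3) = \delta$. Then, exactly as in the proof of Proposition~\ref{p:wedgify}, \cite[Corollary~3.11]{Tao-BSG} shows that $X := X_0^3$ is a coarse approximate subgroup, and $\bdl(X) = \delta$.

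Finally I would verify that $X$ is hwgp and Zariski-dense, using only that $X \supseteq a_0^{-1}A'$, a translate of $A'$. Since $A' \subseteq A$ with $\bdl(A') = \bdl(A)$ and $A$ is wgp in $G$, the set $A'$ is wgp in $G$; and since any translation of $G$ carries proper subvarieties to proper subvarieties, $a_0^{-1}A'$ is again a broad internal wgp subset of $G$, hence Zariski-dense in $G$ and, by Lemma~\ref{l:wgp-hwgp}, hwgp. Therefore $X \supseteq a_0^{-1}A'$ is Zariski-dense; and for any proper normal algebraic subgroup $N \triangleleft G$, with quotient $\theta : G \to G/N$, the set $\theta(X) \supseteq \theta(a_0^{-1}A')$ is broad because $a_0^{-1}A'$ is hwgp, so $\bdl(X/N) > 0$ and $X$ is hwgp. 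This produces the desired hwgp Zariski-dense coarse approximate subgroup.
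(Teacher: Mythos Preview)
Your argument is correct and follows essentially the same route as the paper: feed the wgp-coherence into \cite[5.4]{Tao-BSG}, extract a coarse approximate subgroup $X$ containing a translate of a full-$\bdl$ subset of $A$, and pass wgp (hence hwgp and Zariski-density) up to $X$ via Lemma~\ref{l:wgp-hwgp}. The only cosmetic difference is that the paper invokes the implication $(i)\Rightarrow(iv)$ of \cite[5.4]{Tao-BSG} to obtain the approximate group and the covering translate $x$ in one step, whereas you pass through a small-tripling subset $A'$ and then build $X=X_0^3$ by hand via \cite[Corollary~3.11]{Tao-BSG}; both are standard and interchangeable.
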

\begin{proof}

  Let $A,B,C \subseteq  G$ be as in the definition of wgp-coherent.
  We may assume $\bdl(A)=\bdl(B)=\bdl(C)=1$.

  Let $E := \{ (a,b) \in A \times B : a\cdot b \in C \} \subseteq  A\times B$
  and $P := \{ a\cdot b : (a,b) \in E \} \subseteq  C$.
  Then $\bdl(E) = \bdl((A \times B \times C) \cap \Gamma_G) = 2$,
  and $\bdl(P) \leq  \bdl(C) = 1$.

  We now apply \cite[5.4 $(ii) \Rightarrow  (iv)$]{Tao-BSG}
  to deduce that there is a coarsely approximate subgroup $X$ of $G$ and $x
  \in G$ such that $\bdl(X) = \bdl(A) = \bdl((x\cdot X)\cap A)$.
  Indeed, let $K := \max(\frac{|A\|B|}{|E|},\frac{|P|}{|A|^{1/2}|B|^{1/2}})$.
  Then $\bdl(K) = 0$,
  i.e.\ $K \leq  \xi^\eps$ for all $\eps > 0$. Applying the above cited 
  result in each co-ordinate of the ultraproduct with
  $-1$ as the first constant exponent in ``$K^{O(1)}$'' and $1$ as the second 
  such exponent in 5.4(ii),
  we obtain 5.4(iv) where each constant is $K^{O(1)}$ hence has coarse dimension 
  $O(1)\bdl(K) = 0$.

  Then $(x\cdot X) \cap A \subseteq  A$ is wgp in $G$, being a subset of $A$ with the 
  same $\bdl$.
  So its translate $Y := X \cap (x^{-1}\cdot A) \subseteq  X$ is also wgp.
  In particular $Y$ is Zariski-dense, and by Lemma~\ref{l:wgp-hwgp} it is hwgp.
  Since hwgp and Zariski-density are preserved by taking broad supersets,
  $X$ is also hwgp and Zariski-dense in $G$.
\end{proof}

\subsection{Main theorem}

\begin{theorem} \label{t:BGT-wgp}
  Let $G$ be a connected complex algebraic group.

  The following are equivalent:
  \begin{enumerate}[(i)]\item $\Gamma_G$ is wgp-coherent.
  \item There exists an hwgp Zariski-dense coarsely approximate subgroup of 
  $G$.
  \item There exists a broad hwgp Zariski-dense $\bigwedge$-internal subgroup of $G$.
  \item $G$ is nilpotent.
  \item There exists a coarsely approximate subgroup of $G$ which is wgp with a 
  gap.
  \item[(v')] There exists a coarsely approximate subgroup of $G$ which is wgp.
  \end{enumerate}
\end{theorem}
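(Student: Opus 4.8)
The plan is to prove Theorem~\ref{t:BGT-wgp} by establishing a cycle of implications, drawing on the machinery assembled in this section. The implications $(i) \Rightarrow (ii)$ and $(ii) \Rightarrow (iii)$ are essentially already done: the first is exactly Lemma~\ref{l:BSGT}, and the second follows from Proposition~\ref{p:wedgify}, since the $\bigwedge$-internal subgroup $H \supseteq X$ it produces has $\bdl(H) = \bdl(X) > 0$ (so $H$ is broad), and $H$ inherits hwgp and Zariski-density from $X$ because these properties pass to broad supersets (as noted at the end of the proof of Lemma~\ref{l:BSGT}). Next, $(iii) \Rightarrow (iv)$ is precisely Proposition~\ref{p:bgt}. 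It then remains to close the cycle by proving $(iv) \Rightarrow (v)$, $(v) \Rightarrow (v')$, and $(v') \Rightarrow (i)$ — or some convenient rearrangement thereof.

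For $(v') \Rightarrow (i)$: given a coarse approximate subgroup $X$ of $G$ which is wgp, I would take $A = B = C := X$. By Proposition~\ref{p:wedgify} (or directly, since $XX \subseteq KX$ with $\bdl(K) = 0$) we have $\bdl(XX) = \bdl(X)$, and then a counting argument like the one in the claim inside Lemma~\ref{l:BSGT} shows $\bdl(X \times X \times X \cap \Gamma_G) = 2\bdl(X)$: for $\bdl(X)$-many choices of $(a,b) \in X \times X$ the product $ab$ lands in $X$ (indeed in $XX$, but we need it in $X = C$; here one should instead take $C := XX$, which is still broad with $\bdl(C) = \bdl(X)$ and still wgp since $\bdl(XX \cap W) \le \bdl(K) + \bdl(X \cap K^{-1}W\text{-translates})$... more carefully, $XX \subseteq KX$ so $XX \cap W \subseteq \bigcup_{k \in K} k(X \cap k^{-1}W)$, and wgp of $X$ together with $\bdl(K)=0$ gives $\bdl(XX \cap W) < \bdl(X)$). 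So $A = B = X$, $C = XX$ witnesses wgp-coherence. Trivially $(v) \Rightarrow (v')$. The only genuinely new content is $(iv) \Rightarrow (v)$ — and this is where I expect the main obstacle to lie.

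For $(iv) \Rightarrow (v)$, i.e.\ producing from a nilpotent $G$ a coarse approximate subgroup that is wgp \emph{with a gap}, the natural approach is to take a nilprogression (or nilbox) generated by a tuple $g_1, \dots, g_l$ of independent generic elements of $G$, for $l$ large depending only on $G$. That this is a coarse approximate subgroup is standard (\cite{BG09}); that it is broad and wgp with a gap is exactly the content flagged in the introduction as ``the difficulty'', and is proved in Sections~\ref{s:vectorialML} and~\ref{s:nilproWGP}. So strictly within this section, $(iv) \Rightarrow (v)$ must be deferred: I would state it as following from Theorem~\ref{t:nilproWGP} (or whatever the relevant later theorem is called), citing forward. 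Thus the honest structure is: prove $(i) \Leftrightarrow (ii) \Leftrightarrow (iii) \Rightarrow (iv)$ and $(v) \Rightarrow (v') \Rightarrow (i)$ here and now, and close the loop $(iv) \Rightarrow (v)$ by appeal to the nilprogression construction developed later in the paper. The main obstacle is therefore genuinely external to this section — it is the small-intersection property of nilprogressions — while everything internal is a matter of assembling the already-proved lemmas and a couple of routine counting/translation arguments in the style of the proof of Lemma~\ref{l:BSGT}.

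\begin{proof}
  $(i) \Rightarrow (ii)$ is Lemma~\ref{l:BSGT}.

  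$(ii) \Rightarrow (iii)$: Given an hwgp Zariski-dense coarse approximate subgroup $X$,
  Proposition~\ref{p:wedgify} provides a $\bigwedge$-internal subgroup $H \supseteq X$ with
  $\bdl(H) = \bdl(X)$. Since $X$ is broad, so is $H$, and since hwgp and Zariski-density
  pass to broad supersets, $H$ is hwgp and Zariski-dense.

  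$(iii) \Rightarrow (iv)$ is Proposition~\ref{p:bgt}.

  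$(iv) \Rightarrow (v)$: This is where a nilprogression enters. We defer to
  Section~\ref{s:nilproWGP}: a nilbox generated by a sufficiently long tuple of independent
  generic elements of the nilpotent group $G$ is a coarse approximate subgroup which is
  Zariski-dense and, by the results of Sections~\ref{s:vectorialML} and~\ref{s:nilproWGP},
  wgp with a gap.

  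$(v) \Rightarrow (v')$ is trivial.

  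$(v') \Rightarrow (i)$: Let $X \subseteq G$ be a coarse approximate subgroup which is wgp,
  with $XX \subseteq KX$, $\bdl(K) = 0$. We may assume $\bdl(X) = 1$. Then
  $\bdl(XX) \leq \bdl(K) + \bdl(X) = 1$, so $\bdl(XX) = 1$, and $XX$ is broad.
  Moreover $XX$ is wgp: for any proper subvariety $W \subsetneq G$ we have
  $XX \cap W \subseteq \bigcup_{k \in K} k\,(X \cap k^{-1}W)$, and as $X \cap k^{-1}W$ is
  contained in a translate of $X$ intersected with a proper subvariety, wgp of $X$ gives
  $\bdl(X \cap k^{-1}W) < 1$; combined with $\bdl(K) = 0$ this yields $\bdl(XX \cap W) < 1$.
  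Now set $A := X$, $B := X$, $C := XX$. A counting argument as in the Claim inside the
  proof of Lemma~\ref{l:BSGT} shows $\bdl\bigl((A \times B \times C) \cap \Gamma_G\bigr)
  = 2$: for each of the $\xi^{1 - o(1)}$ pairs $(a,b) \in X \times X$ the product $ab$
  lies in $XX = C$. Hence $\Gamma_G$ is wgp-coherent.
\end{proof}
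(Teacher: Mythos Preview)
Your proof is correct and follows essentially the same route as the paper. The paper's proof is just the same chain of implications, citing the same lemmas; for $(v') \Rightarrow (i)$ it takes $A=B=C:=X^2$ (rather than your $A=B=X$, $C=XX$), but both choices work by exactly the translation argument you spell out for why $XX$ inherits wgp from $X$. One small slip: in your final paragraph, the number of pairs $(a,b)\in X\times X$ is $\xi^{2-o(1)}$, not $\xi^{1-o(1)}$; the conclusion $\bdl=2$ is of course unaffected.
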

\begin{proof}
  \underline{(i) $\Rightarrow $ (ii)}:
    Lemma~\ref{l:BSGT}.

  \underline{(ii) $\Rightarrow $ (iii)}:
    Proposition~\ref{p:wedgify}, noting that a superset of hwgp is hwgp.

  \underline{(iii) $\Rightarrow $ (iv)}:
    Proposition~\ref{p:bgt}.

  \underline{(iv) $\Rightarrow $ (v)}:
    Theorem~\ref{wgp}.

  \underline{(v) $\Rightarrow $ (v')}:
    Immediate.

  \underline{(v') $\Rightarrow $ (i)}:
    Take $A=B=C:=X^2$. As $X$ is coarsely approximate, we have $\bdl(X^2)=\bdl(X)$, so
    $$\bdl((X^2\times X^2\times X^2)\cap \Gamma)\geq \bdl((X\times X\times X^2)\cap \Gamma)=\bdl(X\times X)=2\bdl(X)=2\bdl(X^2),$$
    as required.
\end{proof}

\begin{remark}
  With the possible exception of (iv)$\Rightarrow $(v), all parts of this proof go 
  through with $G$ over $\K$ rather than over $\C$. In particular, (i) $\Rightarrow $ 
  (iv) holds for $G$ over $\K$.
\end{remark}

\begin{question}
  Does Theorem~\ref{wgp}, and hence Theorem~\ref{t:BGT-wgp}, also hold for $G$ over 
  $\K$?
\end{question}

\begin{proof}[Proof of Theorem~\ref{t:main-comb}]
  By Theorem~\ref{t:BGT-wgp}, it suffices to see that the condition 
  Theorem~\ref{t:main-comb}(II) is indeed equivalent to wgp-coherence.

  Theorem~\ref{t:main-comb}(II) holds iff there exist infinite pseudofinite subsets
  $A=\prod_{i\to \U} A_i$, $B=\prod_{i\to \U} B_i$, $C=\prod_{i\to \U}C_i
$ of   $G$ with $|A| = |B| = |C|$
  and $|(A\times B\times C) \cap \Gamma_G| \geq  |A|^{2-\epsilon}$ for all 
  $\epsilon>0$
  such that for all $\alpha$ there exists $\tau$ such that $A_i,B_i,C_i$ are 
  $(\alpha,\tau)$-wgp in $G$ for $\U$-many $i$.
  Setting $\xi := |A|$, this is equivalent to the existence of internal 
  $A,B,C$ with $\bdl(A)=\bdl(B)=\bdl(C)=1$
  and $\bdl((A\times B\times C) \cap \Gamma_G) = 2$ and $A,B,C$ wgp in $G$,
  as required.
\end{proof}

\begin{remark}
  If we weaken the weak general position assumption in the definition of 
  coherence to just Zariski-density, the resulting notion is not useful:
  if $G$ is any infinite algebraic group, then there are broad internal 
  Zariski-dense $X_1,X_2,X_3$ with $\bdl(X_i)=1$ and $\bdl(\prod_i X_i \cap 
  \Gamma_G) = 2$.

  Indeed, let $H \leq  G$ be a non-trivial commutative connected algebraic 
  subgroup (for example, the Zariski closure of the subgroup generated by an 
  element of infinite order; such an element exists since we are in 
  characteristic 0. Alternatively, it follows from Reineke's theorem in model 
  theory (\cite{Re75}) that any minimal infinite algebraic subgroup is abelian).
  Then $\Gamma_H \subseteq  H^3$ is wgp-coherent by Theorem~\ref{t:BGT-wgp},
  witnessed say by $X'_1,X'_2,X'_3 \subseteq  H$.
  Let $Y \subseteq  G$ be an internal Zariski dense subset with $\bdl(Y)=0$;
  to see that this exists, note that a large enough finite set of independent 
  generics is contained in no constructible set of bounded complexity, and 
  apply $\aleph_1$-compactness.
  Then $X_i := X'_i \cup Y$ are as required.
\end{remark}

\begin{remark} \label{r:bgtComp}
  Our proof of Theorem~\ref{t:main-comb}(i)$\Rightarrow $(iv) is an adaptation of arguments in 
  \cite{BGT-lin}. An alternative proof can be obtained (for linear groups) 
  from the statement of \cite[Theorem~2.5]{BGT-lin}, as follows.

  Let $G \leq  \operatorname{GL}_n(\K)$ be a connected linear algebraic group over $\K$.
  Suppose $\Gamma_G$ is wgp-coherent.
  Define ``coset weak general position'' (coset-wgp) by analogy with hwgp, but 
  without the normality condition: a broad $\bigwedge$-internal set $X \subseteq  G(\K)$ is 
  coset-wgp in $G$ if the set of left cosets $X/J$ is broad for any proper 
  algebraic subgroup $J \leq  G$.
  Then Lemma~\ref{l:wgp-hwgp} goes through for coset-wgp, so exactly as in 
  Lemma~\ref{l:BSGT} we obtain a coset-wgp coarsely approximate subgroup $H$ of $G$.

  Applying \cite[Theorem~2.5]{BGT-lin} to the co-ordinates of the 
  ultraproduct, we obtain a coarsely approximate subgroup $H'$ of $G$ such that 
  $H \subseteq  XH'$ for some internal $X$ with $\bdl(X) = 0$,
  and $H'$ generates a subgroup of nilpotency class $\leq n-1$;
  this latter condition passes to the ultraproduct, since it is witnessed by 
  the triviality of certain words.
  Let $J$ be the Zariski closure of the subgroup generated by $H'$.
  Then $J$ has nilpotency class $\leq n-1$ (by considering vanishing of words),
  and $H \subseteq  XJ$,
  which contradicts coset-wgp unless $J=G$.
  So $G$ is nilpotent (of nilpotency class $\leq n-1$).
\end{remark}

\subsection{Strict Larsen-Pink general position}
As mentioned in the introduction, a consequence of \cite{BB-cohMod} is that a 
connected complex algebraic group is commutative iff it is cgp-coherent. Here 
we strengthen this a little, by showing that a weaker general position 
assumption is sufficient to deduce commutativity. The argument is inspired by 
the original argument for commutativity in \cite{Breuillard-Wang}, which 
involved first arguing for nilpotence as a consequence of \cite{BGT-lin} and 
then showing that general position is contradicted if the derived subgroup is 
nontrivial. However, our argument does not go via nilpotence.

\begin{definition}
  A broad $\bigwedge$-internal subset $X$ of an algebraic variety $V$ is in 
  \defn{strict Larsen-Pink general position} (\defn{sLPgp}) if $\bdl(X \cap W) 
  < \frac{\dim(W)}{\dim(V)}\bdl(X)$ when $W \subseteq  V$ is a subvariety with $0 < 
  \dim(W) < \dim(V)$.
  
   For a connected algebraic group $G$ over $\K$, say the graph of the group operation $\Gamma_G$ is \defn{sLPgp-coherent}
  if there are broad internal subsets $A,B,C \subseteq  G$ such that $\bdl((A\times 
  B\times C) \cap \Gamma_G) = 2\bdl(A) = 2\bdl(B) = 2\bdl(C)$
  and $A,B,C$ are sLPgp in $G$.
\end{definition}

\begin{remark}
  We can also consider the weaker condition with $\bdl(X \cap W) \leq  
  \frac{\dim(W)}{\dim(V)}\bdl(X)$, which as in \cite[Question~7.19]{BB-cohMod} 
  we call \emph{Larsen-Pink general position} (\emph{LPgp}), so called because 
  a result of Larsen and Pink shows that broad internal subgroups of simple 
  algebraic groups are in LPgp. It is natural to ask whether 
  Proposition~\ref{p:sLPgpAb} goes through with LPgp in characteristic 0;
  in (internal) positive characteristic, pseudofinite subgroups of Lie type of 
  simple algebraic groups give a counterexample.
\end{remark}

\begin{proposition} \label{p:sLPgpAb}
  Let $G$ be a connected complex algebraic group.
  Then $G$ is commutative if and only if $\Gamma_G$ is sLPgp-coherent.
\end{proposition}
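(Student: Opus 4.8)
The plan is to prove both implications by analogy with the nilpotent case of Theorem~\ref{t:BGT-wgp}, but replacing the use of \cite{BGT-lin} (Proposition~\ref{p:bgt}) with a direct argument that exploits the stronger sLPgp hypothesis to rule out a nontrivial derived subgroup. First I would record the easy direction: if $G$ is commutative, then in particular it is nilpotent, so $\Gamma_G$ is wgp-coherent by Theorem~\ref{t:BGT-wgp}; one then checks that the coarse approximate subgroup produced there (or the one coming from Theorem~\ref{wgp}) can be taken to be sLPgp rather than merely wgp. This should follow because for a \emph{commutative} $G$ the relevant approximate subgroup is (a box in) a generalised arithmetic progression on independent generics, and the Mordell--Lang/vectorial estimates of Section~\ref{s:vectorialML} give intersection bounds with subvarieties $W$ that are governed by $\dim(W)/\dim(G)$, exactly the sLPgp inequality. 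So the reverse direction is essentially a bookkeeping refinement of the $(I)\Rightarrow(II)$ construction, tracking the dimension-weighted gap.

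The substantive direction is: sLPgp-coherent $\Rightarrow$ commutative. Starting from an sLPgp-coherent configuration $A,B,C$, I would run the Balog--Szemer\'edi--Gowers--Tao machinery exactly as in Lemma~\ref{l:BSGT} to extract a Zariski-dense coarse approximate subgroup $X\subseteq G$; since translation and passing to broad subsets of fixed $\bdl$ preserve the sLPgp inequality (the dimension ratio is unaffected), $X$ is sLPgp, and in particular hwgp, so by Proposition~\ref{p:bgt} (via Proposition~\ref{p:wedgify}) $G$ is already nilpotent. Now suppose toward a contradiction that $G$ is not commutative, so the derived subgroup $G'=[G,G]$ is a nontrivial connected algebraic subgroup with $\dim(G')<\dim(G)$. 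The strategy is to show that a Zariski-dense sLPgp coarse approximate subgroup $X$ of a nilpotent $G$ forces $\bdl(X\cap xG')$ to be too large for some coset $xG'$, violating the strict inequality for $W=xG'$ (a subvariety of dimension $\dim(G')$). Passing to the broad $\bigwedge$-internal subgroup $H\supseteq X$ with $\bdl(H)=\bdl(X)$ from Proposition~\ref{p:wedgify}, one has the exact sequence $1\to H\cap G'\to H\to H/(H\cap G')\to 1$ with $\bdl(H)=\bdl(H\cap G')+\bdl(H/(H\cap G'))$ by additivity of $\bdl$; since $H/(H\cap G')$ embeds in the \emph{commutative} group $G/G'$ of dimension $\dim(G)-\dim(G')$, and $H$ is Zariski-dense (so $H/(H\cap G')$ is Zariski-dense in $G/G'$), an upper bound $\bdl(H/(H\cap G'))\le \frac{\dim(G)-\dim(G')}{\dim(G)}\bdl(H)$ would give $\bdl(H\cap G')\ge \frac{\dim(G')}{\dim(G)}\bdl(H)$, contradicting sLPgp applied to the subvariety $G'\subsetneq G$ (of positive dimension, since $G'$ is nontrivial connected).

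Hence the crux is the upper bound $\bdl(Y)\le \frac{\dim(V)}{\dim(G/G')}\,\bdl(X)$ for $Y$ a Zariski-dense broad internal subset of a \emph{commutative} connected group $V=G/G'$; equivalently, that a Zariski-dense broad subset cannot be ``sub-dimensional'' in its ambient commutative group. I expect this to be the main obstacle, and I would prove it using the commutative results of Section~\ref{s:vectorialML}: the nilprogression/arithmetic-progression description there shows that for a commutative $V$, a Zariski-dense broad internal set of $\bdl$ equal to $d$ must, up to the BSG reduction, be comparable to a generalised arithmetic progression whose rank forces $\bdl$ on subvarieties to scale linearly in dimension, which pins down the ambient value. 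An alternative, cleaner route is to avoid the explicit bound entirely: observe that sLPgp of $X$ in $G$ directly implies sLPgp (indeed the strict ratio inequality) is inherited by the image $\bar X=X/G'$ in $G/G'$, apply Proposition~\ref{p:bgt} again if needed, and then argue that a Zariski-dense sLPgp coarse approximate subgroup of a commutative group must have $\bdl(\bar X\cap W)$ matching the generic value $\frac{\dim W}{\dim V}\bdl(\bar X)$ on a generic hyperplane section, forcing either $\dim V=0$ or a contradiction with strictness unless $G'$ was trivial after all; combining with the additivity computation above closes the argument. I would use whichever of these the Section~\ref{s:vectorialML} lemmas make most immediate; the delicate point in either case is extracting a \emph{lower} bound on $\bdl(X\cap G')$ (equivalently an upper bound in the quotient) from Zariski-density alone, and that is where the structure theory for commutative groups is indispensable.
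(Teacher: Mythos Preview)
Your proposal has a genuine gap in the substantive direction (sLPgp-coherent $\Rightarrow$ commutative). The entire argument hinges on the bound
\[
\bdl\!\big(H/(H\cap G')\big)\;\le\;\frac{\dim(G/G')}{\dim(G)}\,\bdl(H),
\]
equivalently a \emph{lower} bound $\bdl(H\cap G')\ge \frac{\dim(G')}{\dim(G)}\bdl(H)$, and you have no mechanism to obtain it. The sLPgp hypothesis runs in the \emph{opposite} direction: it gives \emph{upper} bounds $\bdl(X\cap W)<\frac{\dim W}{\dim G}\bdl(X)$, so applied to $W=G'$ it yields precisely the inequality you are trying to contradict, not a companion lower bound. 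Your suggestion to use Section~\ref{s:vectorialML} does not help: those results concern the specific arithmetic progressions generated by independent generics, and say nothing about an arbitrary Zariski-dense broad $\bigwedge$-internal subgroup of a commutative group (indeed, for fixed $\xi$ such a subgroup can have arbitrarily large $\bdl$, so no upper bound of the form you want can come from the ambient group alone). The ``alternative route'' via inherited sLPgp in $G/G'$ likewise only produces upper bounds on intersections with proper subvarieties of $G/G'$, not an upper bound on $\bdl(\bar X)$ itself. In short, you have correctly identified the crux, but neither of your proposed fixes addresses it.

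The paper's argument is quite different and, notably, does \emph{not} pass through nilpotence at all. After extracting (as you do) a coarse approximate subgroup containing an sLPgp set $Y$ with $\bdl(Y)=\bdl(Y^n)=\dim G$, it works pointwise with the $\bdl$-calculus for tuples: picking $(f,g,h)\in Y^3$ with $\bdl(f,g,h)=3\dim G$, genericity of $f$ (from sLPgp) makes the centraliser $C_f$ a proper infinite subgroup, and then two applications of the \emph{strict} sLPgp inequality (to the coset $\{x:f^x=f^h\}$ of $C_f$ and to a translate of $(f^G)^{-1}$) feed into an additivity computation showing $\bdl(gf^h)>\dim G$, contradicting $gf^h\in Y^4$. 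This exploits strictness twice to produce a single element of excessive $\bdl$, rather than trying to compare $\bdl$ of $H$ across the quotient by $G'$. For the forward direction, the paper simply cites \cite[Proposition~7.10]{BB-cohMod} (cgp-coherence, which implies sLPgp-coherence), so no dimension-tracking refinement of Theorem~\ref{wgp} is needed.
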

\begin{proof}
  If $G$ is commutative, then by \cite[Proposition~7.10]{BB-cohMod} $\Gamma_G$ is 
  cgp-coherent, and so in particular sLPgp-coherent.

  Suppose conversely that $\Gamma_G$ is sLPgp-coherent but $G$ is not commutative. 
  Exactly as in Lemma~\ref{l:BSGT}, we obtain a coarsely approximate subgroup $X$ 
  which contains an sLPgp internal subset $Y \subseteq  X$ with $\bdl(Y)=\bdl(X)$. In 
  particular, $\bdl(Y^n) = \bdl(Y)$ for any $n \in \N$. Rescaling, we may 
  assume $\bdl(Y)=\dim(G)$.

  We now begin to work with $\bdl$ for tuples, as in 
  \cite[\S\S2.1.5-2.1.6]{BB-cohMod} (working in the countable language 
  generated by the field structure and $Y$ by closing off under continuity). 
  Adding constants if necessary, we assume $G$ is $\emptyset $-definable.

  Now let $(f,g,h) \in Y^3$ with $\bdl(f,g,h) = 3\dim(G)$, which exists by 
  \cite[Fact~2.4]{BB-cohMod}. We proceed to calculate using additivity 
  (\cite[Fact~2.8]{BB-cohMod}) and sLPgp.

  Note first that by additivity we have $\bdl(f)=\dim(G)$, and hence by sLPgp 
  $f$ is Zariski generic over $\emptyset $ in $G$ (i.e.\ $\{f\}$ is dense in the 
  $\emptyset $-Zariski topology on $G$). Since $G$ is not abelian and any element of 
  $G$ is the product of two generics, it follows that $f$ is not central, so 
  the centraliser $C_f \leq  G$ is a proper subgroup. Also $C_f$ is infinite; it 
  is a general fact that generics of infinite algebraic groups have infinite 
  centraliser\footnote{
  For want of a clear reference, we give some proofs of this. Since we are 
  working here in characteristic 0, it suffices to see it for a complex 
  algebraic group $G$. By genericity, we can assume $f$ is in the image of 
  $\exp$, and then any 1-parameter subgroup through $f$ is an infinite 
  subgroup of $C_f$. For a characteristic-free proof, we first note that if 
  $C_f$ is finite then $f$ has finite order and hence $G$ has finite exponent. 
  Then we could cite \cite[Proposition~3.23]{poizat-groups} which shows that 
  $G$ is either nilpotent or a bad group; since algebraic groups are not bad, 
  $G$ is nilpotent. But an infinite connected nilpotent algebraic group has 
  infinite centre (\cite[Proposition~1.10]{poizat-groups}) so this contradicts 
  $C_f$ being finite. Alternatively: $G/Z(G)$ is linear (as in 
  Proposition~\ref{p:bgt}) and has finite exponent, so it cannot contain a torus, 
  and it follows that $G/Z(G)$ is unipotent, which implies that $G/Z(G)$ and 
  hence $G$ is nilpotent, which again is a contradiction.}.

  So $0 < \dim(C_f) < \dim(G)$. Since $\dim(f^G) + \dim(C_f) = \dim(G)$ (as 
  one sees by considering the conjugation action), also $0 < \dim(f^G) < 
  \dim(G)$. So sLPgp applies to $C_f$ and $f^G$ and their images under 
  definable bijections.

  Hence, as $g$ belongs to the $\{gf^h,f\}$-definable set $
    gf^h(f^G)^{-1}=\{ x : xf^G \ni gf^h \}$ we have $$\bdl(g/gf^h,f)
    < \dim(gf^h(f^G)^{-1})
    = \dim(f^G).$$
  Similarly, as $h$ belongs to the $\{f^h,f\}$-definable set $C_fh= \{ x : f^x = f^h \}$, we have $$\bdl(h/f^h,f)
    < \dim(C_fh) = \dim(C_f),$$
  so $$\bdl(f^h/f)
    = \bdl(h,f^h/f) - \bdl(h/f^h,f)
    > \dim(G) - \dim(C_f).$$

  Then
  \begin{align*} \bdl(gf^h)
    &\geq  \bdl(gf^h/f) \\
    &=  \bdl(g,gf^h/f) - \bdl(g/gf^h,f) \\
    &=  \bdl(g,f^h/f) - \bdl(g/gf^h,f) \\
    &=  \bdl(g) + \bdl(f^h/f) - \bdl(g/gf^h,f) \\
    &>  \dim(G) + \dim(G) - \dim(f^G) - \dim(C_f) \\
    &=  \dim(G). \end{align*}
  But $gf^h \in Y^4$ and $\bdl(Y^4) = \bdl(Y) = \dim(G)$. Contradiction.
\end{proof}

\begin{remark}
  The proof of commutativity in Proposition~\ref{p:sLPgpAb} did not use 
  characteristic 0, so it goes through for $G$ over an ultraproduct of 
  arbitrary fields. The converse direction is less clear.
\end{remark}

\section{Weak general position of generic arithmetic progressions}
\label{s:vectorialML}

\providecommand{\ACF}{{\operatorname{ACF}}}
\providecommand{\DCF}{{\operatorname{DCF}}}
\providecommand{\Zar}{{\operatorname{Zar}}}
\providecommand{\Stab}{{\operatorname{Stab}}}
\providecommand{\acl}{{\operatorname{acl}}}
\providecommand{\alg}{{\operatorname{alg}}}
\providecommand{\RM}{{\operatorname{RM}}}
\providecommand{\GL}{{\operatorname{GL}}}
\providecommand{\lD}{{\operatorname{lD}}}

\providecommand{\G}{{\mathbb{G}}}
\providecommand{\Q}{{\mathbb{Q}}}
\providecommand{\N}{{\mathbb{N}}}
\providecommand{\Z}{{\mathbb{Z}}}
\providecommand{\R}{{\mathbb{R}}}

\providecommand{\U}{{\mathcal{U}}}

\providecommand{\g}{{\overline{g}}}

\providecommand{\defn}[1]{{\bf #1}}

\newcommand{\ns}[1]{{#1}^*}
\newcommand{\nns}[1]{{#1}^{**}}

\subsection{Preliminaries on commutative algebraic groups}

Let $G$ be a commutative algebraic group over an algebraically closed field 
$C$ of characteristic 0.
We write $G[\infty]$ for the torsion subgroup.
Following \cite[3.1]{BP-LW}, we call $G$ \defn{almost semiabelian} if $G$ is 
connected and $G[\infty]$ is Zariski dense in $G$.

Assume $G$ is connected.
By standard results due to Chevalley and Serre, $G$ is an extension of a 
semiabelian variety by a vector group, i.e.\ we have an exact sequence over $C$ 
of algebraic groups
$$0 \rightarrow  V \rightarrow  G \xrightarrow\pi S \rightarrow  0.$$

Recall that to say that $V$ is a \emph{vector group} over $C$ means that it is 
an algebraic group over $C$ which is isomorphic over $C$ to $\G_a^n$ for some 
$n$. A homomorphism of vector groups over a field $K$ is defined via such 
isomorphisms by a matrix with entries in $K$. Since $\operatorname{char}(C) = 
0$, any algebraic subgroup $V'$ over $K \geq  C$ of $V$ is a vector subgroup 
(to see that $V'$ is closed under multiplication by scalars, note that if 
$v\in V'\setminus\{0\}$ then $\Z\cdot v\subseteq V'\cap \left< v \right>$, 
hence $V'\cap \left< v \right>= \left< v \right>$ as $\left< v \right>$ is irreducible and of 
dimension 1), and is of the form $\ker\theta$ where $\theta$ is a surjective 
homomorphism of vector groups over $K$. By linear algebra, $V'$ admits a 
\emph{vector group complement} over $K$, a vector subgroup $V'' \leq  V$ over 
$K$ with $V = V' \oplus V''$.

Recall also that a semiabelian variety is in particular a connected algebraic 
group with Zariski-dense torsion, which for each $n$ has only finitely many 
$n$-torsion points. The only other property of semiabelian varieties we will 
use is Fact~\ref{f:socle} below.

For every $n<\omega$ the $n$-torsion of $G$ (i.e.\ $\{g\in G: ng=0\}$) is 
finite. Since $C$ is algebraically closed, it follows that $G[\infty] \subseteq  
G(C)$.

\begin{lemma} \label{l:commDecomp}
  $G = G_0 \oplus V_0$ where $G_0 = G[\infty]^\Zar \leq  G$ is almost semiabelian 
  and $V_0 \leq  G$ is a vector group, both algebraic subgroups of $G$ over $C$.
\end{lemma}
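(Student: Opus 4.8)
I want to show $G$ decomposes as an internal direct sum $G_0 \oplus V_0$ with $G_0 = G[\infty]^{\mathrm{Zar}}$ almost semiabelian and $V_0$ a vector group, both defined over $C$. The starting point is the Chevalley–Serre exact sequence $0 \to V \to G \xrightarrow{\pi} S \to 0$ with $V$ a vector group and $S$ semiabelian, all over $C$. The natural candidate for $G_0$ is the Zariski closure $G[\infty]^{\mathrm{Zar}}$ of the torsion. First I would observe that $G_0$ is a connected algebraic subgroup of $G$ over $C$ (connected because the torsion is divisible, hence has connected closure; defined over $C$ because $G[\infty] \subseteq G(C)$ as noted, and the Zariski closure of a $C$-definable set is $C$-definable). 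By construction $G[\infty]$ is Zariski dense in $G_0$, so $G_0$ is almost semiabelian. It remains to produce the complementary vector group $V_0$.

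**Constructing $V_0$.** The key point is that $\pi$ restricts to an \emph{isogeny} $G_0 \to S$: indeed $\pi(G[\infty]) = S[\infty]$ (homomorphisms send torsion to torsion, and $\pi$ is surjective with kernel $V$ a torsion-free vector group, so $\pi$ is injective on torsion), and $S[\infty]$ is Zariski dense in $S$ since $S$ is semiabelian; hence $\pi(G_0) \supseteq \overline{\pi(G[\infty])} = \overline{S[\infty]} = S$, so $\pi|_{G_0}$ is surjective, and its kernel $G_0 \cap V$ is both a subgroup of the torsion-free group $V$ and contained in $G_0$ whose torsion is dense — one checks this kernel is trivial (if it were a nontrivial vector subgroup $V'$, then $G_0/V'$ would be an almost semiabelian quotient but also, via $\pi$, isomorphic to a quotient making dimensions clash: $\dim G_0 = \dim S$ forces $V' = 0$). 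Wait — more carefully, $\dim G_0 \geq \dim S$ from surjectivity, and $\dim G_0 \leq \dim G = \dim V + \dim S$; to pin it down, note $G_0 \cap V$ is a vector group contained in $G_0$, and $G_0/(G_0\cap V) \cong S$; if $G_0 \cap V \neq 0$ it contains a copy of $\G_a$, but $\G_a$ has no torsion while every point of $G_0$ is a limit of torsion — this does not immediately give a contradiction since $G_0$ need not be torsion. Instead I would argue: $G_0 \cap V$, being a vector group, is divisible and torsion-free; in an almost semiabelian group $G_0$, multiplication by $n$ is surjective (as $G_0$ is divisible, being connected commutative), and I claim no nonzero vector subgroup is normal-complemented-away-from-the-torsion — cleanest is to use that $V$ admits a vector group complement, so reduce to showing $G_0 \cap V$ is as small as possible, which follows from $G[\infty] \cap V = 0$ together with density. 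Then set $V_0$ to be a vector group complement of $G_0 \cap V$ inside $V$, chosen over $C$ (possible by the linear-algebra remark in the excerpt), and check $G = G_0 + V_0$ (dimension count: $\dim G_0 + \dim V_0 = \dim S + (\dim V - \dim(G_0\cap V)) = \dim G$, once we know $\dim G_0 = \dim S + \dim(G_0 \cap V)$ — hmm, that gives $\dim G$ only if the intersection $G_0 \cap V_0$ is trivial, which holds since $V_0 \cap (G_0 \cap V) = 0$ and $V_0 \subseteq V$).

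**Assembling the direct sum.** Concretely: put $W := G_0 \cap V$, a vector subgroup of $G$ over $C$. Since $\pi|_{G_0} : G_0 \to S$ is surjective with kernel $W$, we get $\dim G_0 = \dim S + \dim W$. Pick a vector group complement $V_0 \leq V$ of $W$ over $C$, so $V = W \oplus V_0$ and $\dim V_0 = \dim V - \dim W$. Then $G_0 + V_0$ is a subgroup with $G_0 \cap V_0 \subseteq G_0 \cap V \cap V_0 = W \cap V_0 = 0$, so $\dim(G_0 + V_0) = \dim G_0 + \dim V_0 = \dim S + \dim V = \dim G$; since $G$ is connected and $G_0 + V_0$ is a connected algebraic subgroup of full dimension, $G = G_0 \oplus V_0$. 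Both summands are over $C$ by construction. Finally $V_0$ is a vector group (a subgroup of the vector group $V$) and $G_0$ is almost semiabelian as shown.

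**Main obstacle.** The delicate step is verifying that $W = G_0 \cap V$ is exactly the "vector part inside the torsion closure" and in particular that choosing $V_0$ as its complement really yields $G_0 \cap V_0 = 0$ and full dimension — i.e., correctly accounting for the fact that $G_0$ itself may have a nontrivial vector subgroup. I expect this to come down to the exactness of $0 \to W \to G_0 \xrightarrow{\pi} S \to 0$ plus the elementary linear algebra of vector group complements over $C$; once that is in hand the dimension count closes everything. No serious new ideas beyond Chevalley–Serre and the structure of vector groups over a field of characteristic $0$ should be needed.
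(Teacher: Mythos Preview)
Your approach is essentially the paper's: show $\pi(G_0)=S$, set $W:=G_0\cap V$, take $V_0$ to be a vector-group complement of $W$ in $V$ over $C$, and conclude $G=G_0\oplus V_0$; your explicit dimension/intersection count just unpacks what the paper compresses into ``it follows''. One small slip to fix: your parenthetical for $\pi(G[\infty])=S[\infty]$ proves only that $\pi$ is \emph{injective} on torsion (from torsion-freeness of $V$), whereas what you actually use is \emph{surjectivity} onto $S[\infty]$, and that needs divisibility of $V$ --- given $s\in S$ with $ns=0$, lift to $g\in G$, note $ng\in V=nV$, and correct by $v\in V$ with $nv=ng$ to get $n$-torsion mapping to $s$. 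The digression attempting to force $G_0\cap V=0$ should simply be cut; as you rightly conclude afterwards, $G_0$ may have a nontrivial vector part, and the argument never needs otherwise.
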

\begin{proof}
  $G_0$ is over $C$ since $G[\infty] \subseteq  G(C)$.
  We have $\pi(G_0) = S$, since 
  $S[\infty]^\Zar = S$ and $\pi(G[\infty]) = S[\infty]$ as $V$ is divisible.
  Setting $V_0$ to be a vector group complement over $C$ to the vector 
  subgroup $G_0 \cap V$ within $V$,
  it follows that $G = G_0 \oplus V_0$.
  In particular, $G_0$ is connected, so $G_0$ is almost semiabelian.
\end{proof}

For $S' \leq  S$ a semiabelian subvariety,
define $$\lambda(S') := (\pi^{-1}(S')[\infty])^\Zar \leq  G.$$

\begin{lemma} \label{l:subcore}
    \begin{enumerate}[(i)]\item If $G' \leq  G$ is an algebraic subgroup then $\lambda(\pi(G')) \leq  
    G'$.
    \item $\lambda(S')$ is the unique almost semiabelian subgroup of $G$ with 
    $\pi(\lambda(S')) = S'$.
    \end{enumerate}
\end{lemma}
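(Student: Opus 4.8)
The plan is to exploit the defining feature of $\lambda(S')$—that it is built from the torsion of $\pi^{-1}(S')$—together with the fact (recorded just before Lemma~\ref{l:commDecomp}) that $G[\infty] \subseteq G(C)$ and that semiabelian varieties have Zariski-dense torsion with finite $n$-torsion. For part (i): if $G' \leq G$ is an algebraic subgroup, then $\pi(G')$ is a (semi)abelian subgroup of $S$, and $\pi^{-1}(\pi(G'))$ contains $G'$. The key point is that $\pi^{-1}(\pi(G'))[\infty]$ is already contained in $G'$: indeed, any torsion point $t \in \pi^{-1}(\pi(G'))$ has $\pi(t) \in \pi(G')[\infty] = \pi(G'[\infty])$ (using that $V$ is divisible, so $\pi$ restricted to $G'$ surjects onto $\pi(G')[\infty]$ after multiplying by the order of the relevant component group—one must be slightly careful here, see below), hence $t - g' \in \ker\pi = V$ is itself torsion for some $g' \in G'$, and a torsion element of $V$ is $0$ since $V \cong \G_a^n$ is torsion-free in characteristic $0$. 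Thus $\pi^{-1}(\pi(G'))[\infty] \subseteq G'[\infty] \subseteq G'$, and taking Zariski closures gives $\lambda(\pi(G')) = (\pi^{-1}(\pi(G'))[\infty])^{\Zar} \leq G'$ since $G'$ is closed.

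For part (ii): first, $\lambda(S')$ is connected (being a Zariski closure of a subgroup—torsion groups have connected Zariski closure in this setting, or we invoke that $G[\infty]^{\Zar}$-type closures are connected as in Lemma~\ref{l:commDecomp}), and its torsion is Zariski-dense in it by construction, so $\lambda(S')$ is almost semiabelian. Next, $\pi(\lambda(S')) = \pi((\pi^{-1}(S')[\infty])^{\Zar})$; since $\pi$ is a morphism of algebraic groups it is closed on subgroups up to taking closure, so this equals $(\pi(\pi^{-1}(S')[\infty]))^{\Zar} = (S'[\infty])^{\Zar} = S'$, using divisibility of $V$ to get $\pi(\pi^{-1}(S')[\infty]) = S'[\infty]$ and Zariski-density of torsion in the semiabelian $S'$. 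For uniqueness: if $H \leq G$ is almost semiabelian with $\pi(H) = S'$, then $H \subseteq \pi^{-1}(S')$, so $H[\infty] \subseteq \pi^{-1}(S')[\infty]$, hence $H = H[\infty]^{\Zar} \leq \lambda(S')$; conversely part (i) applied to $G' = H$ gives $\lambda(S') = \lambda(\pi(H)) \leq H$, so $H = \lambda(S')$.

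The main obstacle is the torsion-lifting step used in both parts: verifying that $\pi(G'[\infty]) = \pi(G')[\infty]$ (equivalently that every torsion point downstairs lifts to a torsion point inside $G'$). This is where divisibility of the vector group $V$ enters, but one must handle the possibility that $G'$ is disconnected—$\pi(G')$ may be disconnected too, and lifting an $n$-torsion point of $\pi(G')$ into $G'$ requires solving a divisibility problem that only works cleanly on the identity component. The clean fix is to note that $\pi^{-1}(S')^\circ \to S'$ (or $G' \to \pi(G')$ on identity components) is surjective with divisible kernel $V$ or $V \cap G'^\circ$, so torsion lifts on identity components, and then to absorb the finite component group into the Zariski closure, which is where connectedness of $\lambda(S')$ and the finiteness of $n$-torsion of semiabelian varieties do the bookkeeping. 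I expect this to be a short but genuinely load-bearing argument rather than a triviality.
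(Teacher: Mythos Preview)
Your proposal is correct, but you overcomplicate part (i) by worrying about disconnected $G'$. The point you are missing is that in characteristic $0$, \emph{any} algebraic subgroup of a vector group is again a vector group (a linear subspace), so $G' \cap V$ is automatically connected and divisible regardless of whether $G'$ itself is connected. Hence the kernel of $\pi|_{G'} : G' \to \pi(G')$ is divisible, and your torsion-lifting argument goes through directly: given $n$-torsion $s \in \pi(G')$, pick any $g' \in G'$ over $s$, write $ng' = nv$ with $v \in G' \cap V$, and then $g' - v \in G'$ is an $n$-torsion lift of $s$. No passage to identity components or bookkeeping with component groups is required.

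The paper's proof of (i) takes a slightly different and slicker route: it chooses a vector group complement $V'$ to $G' \cap V$ inside $V$, observes that $\pi^{-1}(\pi(G')) = G' \oplus V'$, and then reads off $\pi^{-1}(\pi(G'))[\infty] = G'[\infty]$ immediately from the direct sum decomposition (since $V'$ is torsion-free). This bypasses the lifting argument entirely. Your approach and the paper's are equivalent in content---both ultimately rest on $V$ being torsion-free and $G' \cap V$ being a vector subgroup---but the complement trick packages the argument in one line. For part (ii) your argument is essentially the paper's; the paper simply invokes Lemma~\ref{l:commDecomp} applied to the connected group $\pi^{-1}(S')$ to see that $\lambda(S')$ is almost semiabelian, and for uniqueness uses the equality $G'[\infty] = \pi^{-1}(\pi(G'))[\infty]$ established in (i) to get $G' = \lambda(\pi(G'))$ in one step rather than your two inclusions.
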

\begin{proof}
    \begin{enumerate}[(i)]\item Let $V'$ be a vector group complement to $G'\cap V$ within $V$.
    Then $\pi^{-1}(\pi(G'))$ is the direct sum $G' \oplus V'$ within $G$,
    so in particular $G'[\infty] = \pi^{-1}(\pi(G'))[\infty]$.
    So $\lambda(\pi(G')) = G'[\infty]^\Zar \leq  G'$.

    \item $\lambda(S')$ is almost semiabelian by Lemma~\ref{l:commDecomp} applied 
    with $\pi^{-1}(S')$ in place of $G$ (note $\pi^{-1}(S')$ is connected, as 
    it is an extension of the connected group $S'$ by the connected group 
    $V$). By divisibility of $V$, any torsion point of $S'$ lifts to a torsion 
    point of $G$, so $\pi(\lambda(S')) \supseteq S'[\infty]$, so $\pi(\lambda(S')) = 
    S'$. If $G' \leq  G$ is almost semiabelian, then (using (i)) $G' = 
    \lambda(\pi(G'))$.
    \end{enumerate}
\end{proof}

\subsection{Generic Mordell-Lang for commutative algebraic groups}

\begin{proposition} \label{p:vectorialML}
    Let $C$ be an algebraically closed field of characteristic 0.
    Let $G$ be a connected commutative algebraic group over $C$.
    Write $G = G_0 \oplus V_0$ as in Lemma~\ref{l:commDecomp}.

    Let $r > 0$, and let $\g \in G^r(K)$ be a generic point of $G^r$ over $C$ 
    in an algebraically clased extension field $K \geq  C$.
    Let $\Gamma$ be the finitely generated subgroup $\Gamma := 
    \left<{\g}\right> \leq  G(K)$.

    Let $(\ns K;\ns\Gamma) \succ  (K;\Gamma)$ be an elementary extension in the 
    language of fields expanded by a predicate for $\Gamma$.
    Let $W \subseteq  G$ be an infinite irreducible subvariety over $\ns K$, and 
    suppose $(W \cap \ns\Gamma)^\Zar = W$.

    Then $W = G_0+W'$ for some irreducible subvariety $W'$ of $V_0$.
\end{proposition}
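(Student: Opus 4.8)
The plan is to reduce the statement to the two cases already understood---semiabelian varieties (via Mordell--Lang proper) and vector groups (via linear algebra / elementary dimension counts)---using the decomposition $G = G_0 \oplus V_0$ and the structure of $G_0$ as an extension $0 \to V \to G_0 \xrightarrow{\pi} S \to 0$. First I would observe that since $\g \in G^r$ is generic and $G = G_0 \oplus V_0$, writing $\g = \g_0 + \g_1$ accordingly, the tuple $\g_0$ is generic in $G_0^r$, the tuple $\g_1$ is generic in $V_0^r$, and $\g_0, \g_1$ are independent; moreover $\Gamma = \Gamma_0 \oplus \Gamma_1$ where $\Gamma_0 = \langle \g_0 \rangle$ and $\Gamma_1 = \langle \g_1 \rangle$. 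The key point to extract is that $W$ must be a ``box'': if $\pi_0 : G \to G_0$, $\pi_1 : G \to V_0$ are the projections, I want to show $W = \pi_0(W) \oplus \pi_1(W)$, i.e.\ $W$ is a product of its projections. This should follow from a stabiliser argument: $W$ is stabilised (in the sense that $W + \Stab(W) = W$) by a subgroup, and the genericity and independence of $\g_0, \g_1$ forces the Zariski closure of $W \cap \ns\Gamma$ to respect the direct sum; so one reduces separately to $\pi_0(W) \subseteq G_0$ with $(\pi_0(W) \cap \ns\Gamma_0)^\Zar = \pi_0(W)$ and to $\pi_1(W) \subseteq V_0$ with $(\pi_1(W) \cap \ns\Gamma_1)^\Zar = \pi_1(W)$.

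For the $V_0$ factor there is nothing to prove: any irreducible subvariety of $V_0$ is allowed in the conclusion, so $\pi_1(W) = W'$ will do, provided I have genuinely reduced to the product form. For the $G_0$ factor the target is to show $\pi_0(W) = G_0$, i.e.\ that an almost-semiabelian ambient group forces the whole group. Here I would push down via $\pi$: the image $\pi(\pi_0(W)) \subseteq S$ is an irreducible subvariety of the semiabelian variety $S$, and since torsion is Zariski-dense in $\pi_0(W)$'s trace on $\ns\Gamma_0$ (because $\Gamma_0 \subseteq G_0$ has Zariski-dense torsion as $G_0$ is almost semiabelian, so after passing to the elementary extension $\ns\Gamma_0$ still has this property relative to $\pi$), Mordell--Lang for $S$ (the Faltings/Vojta theorem, cited via \cite{Hr-ML} in the introduction) applies: $\pi(\pi_0(W))$ is a translate of a semiabelian subvariety $S'$ by a torsion point, and combined with $(\cdot \cap \ns\Gamma_0)^\Zar = \pi_0(W)$ and genericity of $\g_0$ one forces $S' = S$. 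Then $\pi_0(W) \supseteq$ some fibre of $\pi$ over a generic point, and because $\pi_0(W) \cap \ns\Gamma_0$ is Zariski-dense and $\lambda(S) = G_0$ by Lemma~\ref{l:subcore}(ii), the subvariety $\pi_0(W)$ must contain $\lambda(S) = G_0$, hence equals $G_0$. Combining, $W = G_0 + W'$ as claimed.

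The main obstacle I anticipate is the first step: establishing that $W$ is a product along the direct sum decomposition, i.e.\ that the trace on $\ns\Gamma$ and the Zariski-density hypothesis genuinely propagate through $\pi_0$ and $\pi_1$. The subtlety is that $\ns\Gamma$ is an elementary extension of $\Gamma$, not $\Gamma$ itself, and $W$ is defined over $\ns K$, so one cannot directly invoke genericity of $\g$ over $C$; one needs that the relevant model-theoretic ``independence'' and genericity facts are expressible in the language of fields expanded by the predicate for $\Gamma$ and hence survive to $(\ns K; \ns\Gamma)$. I would handle this by working with the Zariski closure $\Gamma^\Zar$ and the Kummer/Mordell--Lang type saturation of the finitely generated group, exactly as in the setup of \cite{Hr-ML} and \cite{BP-LW}: the key algebraic input is that a generic point of $G^r$ generates a subgroup whose intersection with any proper subvariety is governed by Mordell--Lang, a statement that transfers to $\ns\Gamma$. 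The second delicate point is ensuring the genericity of $\g_0$ in $G_0^r$ really does force $S' = S$ and $\pi_0(W) = G_0$ rather than merely a proper translate; this is where $r$ being arbitrary (and in particular the genericity being ``full'' in $G^r$, so that no proper algebraic relation holds among the coordinates) does the work, and it should come down to the observation that a proper subvariety of $G_0$ through which all of $\ns\Gamma_0$ passes would give a proper algebraic relation satisfied by the generic $\g_0$, a contradiction.
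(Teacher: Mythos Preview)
Your proposal has two genuine gaps. First, the ``box'' step --- showing $W = \pi_0(W) \oplus \pi_1(W)$ --- is a major part of what must be proven, and you offer no argument beyond ``a stabiliser argument'' and an appeal to independence of $\g_0, \g_1$. There is no a priori reason why a subvariety with Zariski-dense intersection with a product group should itself be a product of its projections; you acknowledge this as the main obstacle but do not actually overcome it. Second, your claim that ``$\Gamma_0 \subseteq G_0$ has Zariski-dense torsion as $G_0$ is almost semiabelian'' confuses $\Gamma_0$ with $G_0$: the group $\Gamma_0 = \langle \g_0 \rangle$ is free abelian (the components of $\g_0$ are generic in characteristic~$0$, hence of infinite order and independent), so $\Gamma_0$ is torsion-free and no torsion-based argument is available. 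Even applying classical Mordell--Lang to the finitely generated group $\pi(\Gamma_0) \leq S$, you would only learn that $\overline{\pi(\pi_0(W))}$ is a coset of some semiabelian subvariety $S' \leq S$; deducing $S' = S$ from genericity is not automatic, and lifting back from $S$ to $G_0$ through the vector-group kernel of $\pi$ is exactly where the difficulty in this proposition lies.

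The paper's route is entirely different: it passes to a differential reformulation (Proposition~\ref{p:vectorialML-DCF}), realising $K \vDash \DCF_0$ with constant field $C$ and $\g$ a $\DCF$-generic tuple, so that $\Gamma$ satisfies three properties (A)--(C) expressed via the logarithmic derivative $\lD : G \to LG$. After quotienting by any almost semiabelian subgroup of $\Stab(W)$, one works inside the finite-Morley-rank group $H = \lD^{-1}(\langle \lD(\Gamma)\rangle_C) \supseteq \Gamma + G(C)$, analyses stabilisers of the $\lD$-fibres of a Kolchin-irreducible component $X$ of $W \cap H$, and uses the model-theoretic socle description of $S$ over the constants (Fact~\ref{f:socle}) together with Zilber indecomposability to force $X$ into a single coset of $G(C)$, contradicting property~(A). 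This adapts Hrushovski's function-field Mordell--Lang argument and does not invoke the Faltings--Vojta theorem.
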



\begin{remark}
    The option to pass to an elementary extension can be seen as a kind of 
    uniformity.
    A stronger form of uniformity would have that $((W_a \cap 
    \Gamma)^\Zar)_a$ is a constructible family if $(W_a)_a$ is.
    This holds in some related situations \cite{scanlon-automaticUniformity},
    but does not hold here.
    Consider for example the case $G=\G_a^2$ and $r=2$. Then the image of 
    $V_b = \{ x^2 + y^2 = b^2 \}$ under an element of $\GL_2(K)$ which maps 
    $\Z^2$ to $\Gamma$ has unbounded finite intersection with $\Gamma$.
\end{remark}

\begin{question}
  It would be interesting to know how far the assumptions can be weakened 
  without changing the conclusion. In particular (thanks to Udi Hrushovski for 
  suggesting this question), is it sufficient that $\g$ is contained in no 
  proper algebraic subgroup over $C$? This holds for $G$ a semiabelian 
  variety, as a consequence of the full Mordell-Lang theorem. Our proof method 
  will not handle this case.
\end{question}

\subsubsection{Preliminaries on $\DCF_0$}
To prove Proposition~\ref{p:vectorialML}, we exploit the theory of differentially 
closed fields, following \cite{Hr-ML}.
Our arguments are related to those of \cite{HP-MLTrans}, where a finer result 
along the same lines is proven for the case of a semiabelian variety.
We refer to \cite{marker-dcf} and \cite{marker-maninKernels} for general 
background on differentially closed fields and algebraic groups over them.

Work in $K = (K,\partial) \vDash  \DCF_0$. Let $C = C(K) = \ker\partial \leq  K$ be 
its constant field.
Let $G$ be a commutative algebraic group over $C$.
As explained in e.g.\ \cite[Remark~2.2]{Pillay-algDGrps},
the logarithmic derivative $\lD(g) = \lD_G(g) := d(\rho_{-g})_g(\partial g)$,
where $\rho_{-g}(h) := h-g$,
is a $C$-definable homomorphism which gives a $C$-definable exact sequence of 
commutative groups
$$0 \rightarrow  G(C) \rightarrow  G(K) \xrightarrow{\lD} LG(K) \rightarrow  0,$$
where $LG = T_0(G)$ is the tangent space at 0, which is a vector group over 
$C$.

\newcommand{\Lpi}{L\pi}
If $\pi : G \rightarrow  G'$ is a morphism of algebraic groups over $C$,
and $\Lpi : LG \rightarrow  LG'$ is the differential of $\pi$ at $0 \in G$,
then $\Lpi \circ \lD_G = \lD_{G'} \circ \pi$.

The ($K$-)Kolchin topology on $G(K)$ is a Noetherian topology refining the 
Zariski topology, with basic closed sets those defined in co-ordinates as 
zero-sets of differential polynomials. The group operations are 
Kolchin-continuous, the definable subsets are precisely the 
Kolchin-constructible subsets, and definable subgroups are Kolchin-closed.
A Kolchin-constructible set is \emph{Kolchin-irreducible} if it is not the 
union of two proper relatively Kolchin-closed subsets. 


The following fact describes the ``model-theoretic socle'' of a semiabelian 
variety over the constants; it can be seen as a direct consequence of 
\cite[Lemma~2.6]{HP-MLTrans}, or of \cite[Corollary~3.8(i)(b), 
Fact~3.3(v)]{BBP-ML-SA}.

\begin{fact} \label{f:socle}
    If $S$ is a semiabelian variety over $C = C(K)$,
    and $H \leq  S(K)$ is a connected finite Morley rank subgroup containing 
    $S(C)$,
    then $S(C)$ is the maximal connected definable subgroup of $H$ which is 
    almost internal to $C$.
\end{fact}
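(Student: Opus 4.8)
The plan is to split the maximality claim into a trivial inclusion and a reverse inclusion that reduces to a non-internality property of the logarithmic derivative $\lD = \lD_S : S(K) \to LS(K)$, the latter being the real content and the part I would extract from the cited references. For the trivial inclusion, note that $S(C)$ is a legitimate competitor: it is definable in $\DCF_0$, being $\ker\lD$; it is internal to $C$, since, fixing a $C$-embedding of $S$ into a projective space, $S(C)$ is in $C$-definable bijection with a constructible subset of $\mathbb{P}^N(C)$, and hence it is a fortiori almost internal to $C$; and it is connected, as is standard for the group of $C$-points of a connected algebraic group over the algebraically closed field $C$ (any definable subgroup of $S(C)$ has the form $S'(C)$ for an algebraic subgroup $S'\le S$ over $C$, and this has finite index only when $S'=S$). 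So $S(C)$ is a connected definable almost-$C$-internal subgroup of $H$, and it remains to show that it contains every other such subgroup.

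So let $H'\le H$ be connected, definable, and almost internal to $C$, and set $H'' := H'+S(C) \le H$. Then $H''$ is definable, connected (a definable image of the connected group $H'\times S(C)$), and almost internal to $C$ (a definable quotient of the almost-$C$-internal group $H'\times S(C)$, using the standard preservation of almost-internality under products and definable images). Since $S(C)=\ker\lD\subseteq H''$, we get $H'' = \lD^{-1}(\Lambda)$ for $\Lambda := \lD(H'') = \lD(H')\le LS(K)$, a connected definable subgroup, of finite Morley rank because $H$ has finite Morley rank. Now I would invoke \cite[Lemma~2.6]{HP-MLTrans} (or \cite[Corollary~3.8(i)(b), Fact~3.3(v)]{BBP-ML-SA}): for $S$ semiabelian over $C$, the preimage $\lD_S^{-1}(\Lambda)$ of a non-zero connected definable subgroup $\Lambda\le LS(K)$ is \emph{not} almost internal to $C$ --- equivalently, the exact sequence $0\to S(C)\to S(K)\xrightarrow{\lD_S} LS(K)\to 0$ admits no almost-$C$-internal lift of a nontrivial subgroup. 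Since $H'' = \lD^{-1}(\Lambda)$ is almost $C$-internal, this forces $\Lambda = 0$, i.e.\ $\lD(H') = 0$, i.e.\ $H'\subseteq\ker\lD = S(C)$, as desired.

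The main obstacle is precisely the input quoted in the last step --- the failure of $\lD_S^{-1}(\Lambda)$ to be almost internal to $C$ when $\Lambda\ne 0$. This is the genuine differential-algebraic content (a manifestation of Manin's theorem of the kernel and of Kolchin's analysis of the logarithmic derivative, reflecting non-triviality of the relevant differential Galois group), and I would cite it rather than reprove it; granting it, the rest is the short diagram chase above together with routine preservation properties of connectedness and (almost-)internality.
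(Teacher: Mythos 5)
Your proposal is correct and matches the paper's treatment, which is simply to cite [HP-MLTrans, Lemma~2.6] (or the BBP reference) without further argument; you have fleshed out what "direct consequence" means. The trivial inclusion is handled right, and the reduction via $H'' = H' + S(C) = \lD_S^{-1}(\Lambda)$ is clean and isolates the genuine content as: $\lD_S^{-1}(\Lambda)$ is not almost internal to $C$ for any nontrivial connected definable $\Lambda \leq LS(K)$ of finite Morley rank. One thing worth being aware of: this extracted "key input" is logically \emph{equivalent} to the Fact itself (given $S(C) = \ker\lD_S$ and surjectivity of $\lD_S$), so the argument is really an unpacking of why the Fact is a restatement of what the cited lemmas assert, rather than a reduction to something strictly weaker — which is fine, and is exactly what the paper's phrase "can be seen as a direct consequence" conveys. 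It is also worth noting that the claim genuinely uses semiabelianity: for $G = \G_a$, the group $\lD_{\G_a}^{-1}(\G_a(C)) = \{y : \partial^2 y = 0\}$ \emph{is} almost internal to $C$ (via $y \mapsto (y - t\,\partial y,\ \partial y)$ for any fixed $t$ with $\partial t = 1$), so the non-splitting you invoke is a specifically semiabelian phenomenon, and your attribution of it to the Manin--Kolchin circle of ideas is the right framing.
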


\subsubsection{Differential formulation of generic Mordell-Lang}

For a subset $A$ of a vector space over a field $k$, by $\left< A \right>_k$ we will denote the $k$-linear span of $A$.
\begin{proposition} \label{p:vectorialML-DCF}
    Let $G$ be a connected commutative algebraic group over $C = C(K)$.
    Write $G = G_0 \oplus V_0$ over $C$ as in Lemma~\ref{l:commDecomp}, with $G_0$ 
    almost semiabelian and $V_0$ a vector group.

    Let $0 \neq  \Gamma \leq  G(K)$ be a non-trivial abstract subgroup such that:
    \begin{enumerate}[(A)]\item No $\gamma \in \Gamma \setminus \{0\}$ is contained in any proper 
    subvariety of $G$ defined over $C$.
    \item $\left<{\lD(\Gamma)}\right>_C$ has finite dimension as a $C$-vector space.
    \item No $\xi \in \left<{\lD(\Gamma)}\right>_C \setminus \{0\}$ is contained in any proper 
    vector subgroup of $LG$ over $C$.
    \end{enumerate}

    Let $W \subseteq  G$ be an infinite irreducible subvariety over $K$, and suppose $(W \cap 
    \Gamma)^\Zar = W$.

    Then $W = G_0+W'$ for some irreducible subvariety $W'$ of $V_0$.
\end{proposition}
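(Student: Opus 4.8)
The plan is to replace the abstract group $\Gamma$ by a definable group of finite Morley rank and then run a socle analysis in the style of Hrushovski's proof of Mordell-Lang for function fields \cite{Hr-ML}. By (B), $U:=\left<\lD(\Gamma)\right>_C$ is a finite-dimensional $C$-linear subspace of $LG(K)$, hence a connected, $C$-definable subgroup of $LG(K)$ of Morley rank $\dim_C U$. Set $\tilde\Gamma:=\lD_G^{-1}(U)$. From the $C$-definable exact sequence $0\to G(C)\to G(K)\xrightarrow{\lD_G}LG(K)\to 0$ one reads off that $\tilde\Gamma$ is a connected $C$-definable subgroup of $G(K)$ of finite Morley rank $\dim G+\dim_C U$, containing $\Gamma$ (as $\lD(\Gamma)\subseteq U$) and $\ker\lD_G=G(C)$. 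Since $\Gamma\subseteq\tilde\Gamma$ and $(W\cap\Gamma)^\Zar=W$, the Kolchin-constructible set $X:=W\cap\tilde\Gamma$ is Zariski dense in $W$. Fix a Kolchin-irreducible component $Y$ of $X$ with $Y^\Zar=W$, a Kolchin-generic point $a$ of $Y$ over a model $\mathcal M\supseteq K$ containing the parameters, and put $p:=\tp(a/\mathcal M)$; then $a$ is Zariski generic in $W$ and $p$ concentrates on $X$. Translating by a point of $W\cap\Gamma$ (which changes neither $\Gamma$ nor, up to a harmless shift inside $V_0$, the conclusion, and preserves (A)--(C)), I may assume $0\in W$.

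Next I would reformulate the goal. The conclusion $W=G_0+W'$ is equivalent to $G_0\subseteq\Stab_G(W)$: granting this, $W$ is a union of cosets of $G_0=\ker q$ for $q:G\to G/G_0\cong V_0$, so $W=q^{-1}(q(W))$ with $W':=q(W)$ an irreducible subvariety of $V_0$, being the image of the irreducible $W$ under the surjective morphism $q$ with irreducible fibres. Since $\Stab_G(W)$ is Zariski closed, since $\Stab(p)+Y=Y$ gives $\Stab(p)+W=W$, and since $G_0(C)$ is Zariski dense in $G_0$ (as $C$ is algebraically closed and $G_0$ is over $C$), it suffices to prove $G_0(C)\subseteq\Stab(p)$.

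Before that, some easy observations. We may assume $G_0\neq 0$, else $G=V_0$ and the assertion is vacuous; then the semiabelian quotient $S:=\pi_S(G)$ is nonzero (an almost-semiabelian group with trivial semiabelian quotient is a vector group with dense torsion, hence trivial), so $V_0\subsetneq G$ and $LV_0$, $LG_0$, $LV=\ker L\pi_S$ are all proper $C$-subgroups of $LG$. By (C) this forces $U\cap LV_0=U\cap LG_0=U\cap LV=0$. Consequently $\tilde\Gamma\cap G_0(K)=\lD_{G_0}^{-1}(U\cap LG_0(K))=G_0(C)$ (and likewise $\tilde\Gamma\cap V_0(K)=V_0(C)$); the maximal connected $C$-internal definable subgroup $\mathfrak c$ of $\tilde\Gamma$ equals $G(C)$, for it contains $\ker\lD_G=G(C)$, while any connected $C$-internal $D\leq\tilde\Gamma$ has $\pi_S(D)\subseteq S(C)$ by Fact~\ref{f:socle} applied to $\pi_S(\tilde\Gamma)=\lD_S^{-1}(L\pi_S(U))$, hence $\lD_G(D)\subseteq U\cap LV(K)=0$, i.e. $D\subseteq G(C)$; and by (A) no nonzero element of $\Gamma$ lies in the proper subvariety $G_0$ over $C$, so $\Gamma\cap G_0=0$.

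It remains to show $G_0(C)\subseteq\Stab(p)$, and this is where I would adapt the socle argument of \cite{Hr-ML}, in the form developed in \cite{BBP-ML-SA}. The essential input is Fact~\ref{f:socle}: modulo $\mathfrak c=G(C)$, the semiabelian part of $\tilde\Gamma$ carries no further subgroup almost internal to $C$, so relative to the Zariski-dense definable set $X\subseteq\tilde\Gamma$ it is rigid (``one-based over $\mathfrak c$''); together with $\tilde\Gamma\cap G_0=G_0(C)$ this should force the $G_0$-fibres of $X$ to be as large as possible, so that $X$ is, up to a Kolchin-set of lower Morley rank, invariant under $G_0(C)=\mathfrak c\cap G_0(K)$, whence $p$ is $G_0(C)$-invariant. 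Conditions (A) and (C) enter precisely here, to exclude the degenerate configurations — torsion cosets, and $W$ being trapped in a proper sub-bundle of $q:G\to V_0$ — that would otherwise obstruct this. I expect this last step to be the main obstacle: Hrushovski's socle analysis is designed for a semiabelian variety \emph{not} defined over the constants, where the rigidity is directly visible, whereas here $G$ lives entirely over $C$ and the needed rigidity must be squeezed out of the almost-semiabelian factor $G_0$ through Fact~\ref{f:socle}, all while tracking the interaction of the constructible geometry of $W$ with the two quotients $\pi_S$ and $q$ simultaneously.
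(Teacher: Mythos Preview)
Your setup is essentially the paper's: the group $\tilde\Gamma=\lD^{-1}(U)$ is the paper's $H$, you pass to a Kolchin-irreducible piece of $W\cap\tilde\Gamma$ just as the paper does, and your reformulation of the conclusion as $G_0\subseteq\Stab_G(W)$ is correct. Your computation that the maximal connected $C$-internal subgroup of $\tilde\Gamma$ is $G(C)$ is also correct and uses Fact~\ref{f:socle} in the right way.

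But the proof stops precisely where the real work begins, as you yourself flag. The paper does \emph{not} try to prove $G_0(C)\subseteq\Stab(p)$ directly; there is an additional idea you are missing. First it performs a \emph{stabiliser reduction}: quotient by a maximal almost-semiabelian subgroup of $\Stab(W)$ (checking that (A)--(C) descend), so one may assume $\Stab(W)$ contains no nontrivial almost-semiabelian subgroup, and then argue by contradiction assuming $S\neq 0$. The contradiction comes from a \emph{fibre analysis over $\lD$}: for each $\xi\in\lD(X)$ the fibre $X_\xi=X\cap\lD^{-1}(\xi)$ lies in a coset of $G(C)$, so $\Stab(X_\xi)=G'_\xi(C)$ for an algebraic subgroup $G'_\xi\leq G$; by rigidity of $S$ the family $(\pi(G'_\xi)^o)_\xi$ is finite, so on a Kolchin-dense $\Xi\subseteq\lD(X)$ one has $\pi(G'_\xi)^o=S'$ for a fixed semiabelian $S'$. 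Then $\lambda(S')(C)$ stabilises $W$, and the stabiliser reduction forces $S'=0$. Next a minimal-Morley-rank argument gives $\lD_S^{-1}(L\pi(\xi))\subseteq\acl(Cd\xi)$ for $\xi\in\Xi$, so $Z:=\lD_S^{-1}(L\pi(\Xi))$ is almost internal to $C$; Zilber indecomposability together with Fact~\ref{f:socle} then force $\langle Z-Z\rangle=S(C)$, hence $L\pi(\Xi)$ is a single point. Property (C) gives $U\cap\ker L\pi=0$, so $\Xi$ itself is a point, $X$ lies in a coset of $G(C)$, and property (A) forces $W=G$, contradicting the stabiliser reduction. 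In short, the socle fact is applied not to $\tilde\Gamma$ globally (as in your $\mathfrak c=G(C)$ computation) but to a group generated from the $\lD$-fibres of $X$ after the stabiliser reduction has killed the obvious obstruction; this fibrewise structure is the missing ingredient in your sketch.

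One minor point: your claim that $LG_0$ is a proper $C$-subgroup of $LG$ tacitly assumes $V_0\neq 0$, which you have not argued (and which is false when $G$ is almost semiabelian). The paper's argument does not rely on $U\cap LG_0=0$, only on $U\cap\ker L\pi=0$, which does follow from (C) once $S\neq 0$.
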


\begin{proof}[Proof of Proposition~\ref{p:vectorialML} from Proposition~\ref{p:vectorialML-DCF}]
    Let $C$, $G=G_0 \oplus V_0$, $\g$, $(\ns K, \ns\Gamma) \succ  (K,\Gamma)$, and 
    $W$ be as in Proposition~\ref{p:vectorialML}.

    First note, by considering automorphisms over $C$, that the truth of 
    Proposition~\ref{p:vectorialML} does not depend on the choice $\g$ of 
    realisation of the algebraic generic type over $C$ of $G^r$.
    So (possibly extending $C$) we may assume that $K \vDash  \DCF_0$,
    and $C$ is the constant field of $K$,
    and $\g \in G^r(K)$ is $\DCF$-generic over $C$,
    i.e.\ contained in no proper Kolchin-closed subset of $G^r$ defined over 
    $C$.

\newcommand{\x}{{\overline{x}}}
    Then $\Gamma$ satisfies (A-C).
    Indeed, if $n_i \in \Z$ and $\sum_i n_i g_i$ is not algebraically generic 
    in $G$ over $C$, then $\x \mapsto  \sum_i n_i x_i$ is not surjective as a map 
    $G^r \rightarrow  G$ (since it is Zariski-continuous), so $n_i = 0$ for all $i$.
    Similarly, if $c_i \in C$ and $\sum_i c_i \lD(g_i)$ is contained in a 
    proper vector subgroup over $C$,
    then $\x \mapsto  \sum_i c_i \lD(x_i)$ is not surjective as a map $G^r \rightarrow  LG$, 
    so $c_i = 0$ for all $i$.
    Finally, $\left<{\lD(\Gamma)}\right>_C = \left<{\lD(\g)}\right>_C$.

    Let $(K,\partial,\Gamma)$ be the expansion of the differential field $K$ 
    by a predicate for $\Gamma$.
    The properties (A-C) can be expressed elementarily in this language 
    (fixing the dimension in (B)).
    Let $(\nns K,\nns\partial,\nns\Gamma) \succ  (K,\partial,\Gamma)$ be an 
    elementary extension such that $(\ns K,\ns\Gamma)$ embeds elementarily in 
    the reduct $(\nns K,\nns\Gamma)$ (e.g.\ any sufficiently saturated 
    elementary extension will do).
    Then since $\ns\Gamma \cap W$ is Zariski dense in $W$, also $\nns\Gamma 
    \cap W$ is Zariski dense in $W$,
    and we conclude by applying Proposition~\ref{p:vectorialML-DCF}
    to $(\nns K,\nns\partial,\nns\Gamma)$.
\end{proof}

\begin{remark}
\newcommand{\divHull}{{\operatorname{divHull}}}
    A slightly stronger ``finite rank'' version of Proposition~\ref{p:vectorialML} 
    follows from this proof: namely, instead of taking $\Gamma := \left<{\g}\right>$, 
    we could take $\Gamma$ to be the image of any homomorphism $\theta : \Q^r 
    \rightarrow  G$ with $\theta(e_i) = g_i$.

\end{remark}

\subsubsection{Proof of Proposition~\ref{p:vectorialML-DCF}}
For a definable subset $A \subseteq  G$, let $\Stab(A) := \{ g \in G : g+A = A \}$ be 
the setwise stabiliser, which is a definable subgroup of $G$.

Since $W$ is a subvariety, $\Stab(W) \leq  G$ is an algebraic subgroup.
Quotienting, we may reduce to the case that $\Stab(W)$ contains no 
non-trivial almost semiabelian subgroup $J$.
Indeed, suppose $J \leq  G$ is such.
Then $J \leq  G_0$, and $J$ is over $C$ (as $J[\infty]\subseteq 
G[\infty]\subseteq G(C)$), and $G/J \cong  G_0/J \oplus V_0$, and $G_0/J$ is almost 
semiabelian.
Since $J$ is over $C$, property (A) of $\Gamma$ passes to $\Gamma/J$.
Writing $\pi_J : G \rightarrow  G/J$ for the quotient map,
we have $\lD_{G/J}(\Gamma/J) = \Lpi_J(\lD_G(\Gamma))$, and $\Lpi_J$ is a 
homomorphism of vector groups over $C$, so properties (B) and (C) also pass to 
$\Gamma/J$.
Furthermore, $(W/J \cap \Gamma/J)^\Zar = W/J$, and $W/J$ is an 
irreducible subvariety of $G/J$.
If $W/J$ is a point, then $W = g+J$ for some $g \in \Gamma$ (since $J$ 
stabilises $W$ and $W\cap \Gamma \neq  \emptyset $); then $J$ contains some non-trivial 
element of $\Gamma$, so since $J$ is over $C$ we must have by property (A) 
that $J=G$, and so $W = G = G_0 + V_0$.
Otherwise, $W/J$ is infinite, and applying the proposition inductively in 
$G/J$ (note $\dim(G/J)<\dim(G)$ since $J$ is infinite) we find $W/J = G_0/J + 
W''$ with $W'' \subseteq  V_0$ an irreducible subvariety;
then $W = G_0 + W''$ as required.

Recall that $G$ is an extension of a semiabelian variety $S$ by a vector 
group, all over $C$. Let $\pi : G \twoheadrightarrow  S$ be the corresponding morphism.
If $S$ is trivial, then so is $G_0$, and we conclude by setting $W' := W$.
So assume $S$ is not trivial. We argue for a contradiction.

Let $H := \lD^{-1}(\left<{\lD(\Gamma)}\right>_C) \leq  G(K)$.
So $\Gamma + G(C) \leq  H$,
and $H$ is a connected definable subgroup of finite Morley rank.
The finiteness of the Morley rank can be seen as a consequence of the 
Lascar inequalities and the fact that Morley rank coincides with Lascar 
rank for an $\omega$-stable group of finite Lascar rank \cite[Fact~1 and 
Appendix~B]{PillayPong}.

Let $X$ be a Kolchin irreducible component of $W \cap H$
such that $X \cap \Gamma$ is Zariski dense in $W$.

For $\xi \in \lD(X)$, let $X_\xi := X \cap \lD^{-1}(\xi)$.
Since $\lD^{-1}(\xi)$ is a coset of $G(C)$ and the induced structure on 
$C$ is a pure ACF, $\Stab(X_\xi) = G'_\xi(C)$ for some algebraic subgroup 
$G'_\xi \leq  G$. Then $(\pi(G'_\xi))_{\xi \in \lD(X)}$ is a definable family of 
algebraic subgroups of $S$, so by rigidity of $S$ (namely, the fact that every 
algebraic subgroup of $S$ is over $C$) and purity and stable embeddedness of 
$C$, it is a finite family.
So by irreducibility of $X$
there is some fixed semiabelian subvariety $S' \leq  S$ and a definable 
subset $\Xi \subseteq  \lD(X)$ such that $\pi(G'_\xi)^o = S'$ for $\xi \in \Xi$ and $X 
\cap \lD^{-1}(\Xi)$ is Kolchin dense in $X$, and hence Zariski dense in $W$.
Now by Lemma~\ref{l:subcore}(i), $\lambda(S') \subseteq  G'_\xi$ for $\xi \in \Xi$, so any 
element of $\lambda(S')(C)$ stabilises $X \cap \lD^{-1}(\Xi)$ and hence $W$.
By the assumption on $\Stab(W)$ (and Lemma~\ref{l:subcore}(ii)), it follows that 
$\lambda(S')$ is trivial.
By Lemma~\ref{l:subcore}(ii), we conclude that $S'$ is trivial.

\[\begin{tikzcd}
X \subseteq  G \arrow{r}{\lD} \arrow[d, shift left=1em, swap, "\pi"] &
LG \arrow[d, shift right=1em, "\Lpi"] \supseteq \Xi \\
\hphantom{X \subseteq } S \arrow{r}{\lD_S} & LS \hphantom{ \supseteq \Xi}
\end{tikzcd}
\]

\begin{claim}
    $\Lpi(\Xi)$ is a point.
\end{claim}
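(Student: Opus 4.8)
The plan is first to transfer the claim into a statement about $\pi(X)$, and then to derive that statement from the socle computation Fact~\ref{f:socle} together with the triviality of $S'$ just established.

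Since $\Lpi\circ\lD = \lD_S\circ\pi$ is Kolchin-continuous, $X$ is Kolchin-irreducible, and $X\cap\lD^{-1}(\Xi)$ is Kolchin-dense in $X$, the Kolchin-closure of $\Lpi(\Xi)$ contains $\Lpi(\lD(X)) = \lD_S(\pi(X))$; hence $\Lpi(\Xi)$ is a single point if and only if $\lD_S\circ\pi$ is constant on $X$, equivalently $\lD_S(\pi(X))$ is a point, equivalently $\pi(X)$ lies in a single coset of $\ker\lD_S = S(C)$. It therefore suffices to prove this last statement. Put $H_S := \pi(H)\leq S(K)$. Using that $V$ is divisible and $\lD_V$ surjective one gets $\pi(G(C)) = S(C)$, so $S(C)\leq H_S$; moreover $H_S\cong H/(H\cap V)$ is a connected definable subgroup of finite Morley rank with $\lD_S(H_S) = \Lpi(\langle\lD(\Gamma)\rangle_C)$, a finite-dimensional $C$-vector space by (B), so $H_S/S(C)$ is internal to $C$, and by Fact~\ref{f:socle} $S(C)$ is exactly the socle of $H_S$. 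Also, (A) and (C) give $\Gamma\cap G(C) = \{0\}$ and $\pi(\Gamma)\cap S(C) = \{0\}$, so $\pi(\Gamma)$ is internal to $C$.

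Now suppose for contradiction that $\pi(X)$ meets more than one coset of $S(C)$; then $L' := \langle\lD_S(\pi(X))\rangle_C$ is a non-zero $C$-subspace and $\pi(X)$ is contained in the connected definable subgroup $H_S' := \lD_S^{-1}(L')$, with $S(C)\lneq H_S'$ and $H_S'/S(C)$ internal to $C$. Here one brings in the triviality of $S'$: for $\xi$ a generic point of $\lD(X)$ lying in $\Xi$, the generic fibre $X_\xi = X\cap\lD^{-1}(\xi)$ has $\Stab(X_\xi) = G'_\xi(C)$ with $\pi(G'_\xi)$ finite, so $\pi$ is finite modulo $V$ on $X_\xi$. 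Propagating this through the Zariski-dense intersection $\pi(X\cap\Gamma)\subseteq\pi(\Gamma)$, using the rigidity of $S$ and Fact~\ref{f:socle} (whose hypotheses apply since $\pi(\Gamma)$ has finite rank by (B) and $\pi(\Gamma)\cap S(C)=0$) to see that $\pi(X)^\Zar$ is a coset of an algebraic subgroup $S_1\leq S$, and combining $\pi(X)\subseteq H_S'$ with Fact~\ref{f:socle} and Lemma~\ref{l:subcore}, one exhibits a non-trivial almost semiabelian subgroup $\lambda(S_1)\leq\Stab(W)$, contradicting the standing reduction that $\Stab(W)$ contains no non-trivial almost semiabelian subgroup. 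Hence $\pi(X)$ lies in a single coset of $S(C)$, and the claim follows.

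Apart from the soft reduction above, the real work — and the step I expect to be the main obstacle — is converting ``$S'$ trivial'', i.e.\ the finiteness of the $\pi$-stabilisers of the generic $\lD$-fibres of $X$, into a forbidden almost semiabelian subgroup of $\Stab(W)$. This needs a careful count relating the dimensions of the fibres of $\pi|_X$, of $\lD|_X$, and of the coset structure of $\pi(X)^\Zar$, and it is exactly here that the full strength of the hypothesis $(W\cap\Gamma)^\Zar = W$ is used, and where one must be careful not to collapse the vector-group direction $V_0$ which survives into the eventual conclusion $W = G_0 + W'$.
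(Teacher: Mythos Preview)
Your initial reduction --- that $\Lpi(\Xi)$ is a point iff $\pi(X)$ lies in a single coset of $S(C)$ --- is fine, and the observations $\Gamma\cap G(C)=\{0\}$ and $\pi(\Gamma)\cap S(C)=\{0\}$ are correct. But from there the proposal is not a proof: you yourself flag the core step as unfinished, and the strategy you sketch for it does not go through.

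Concretely: there is no reason for $\pi(X)^{\Zar}$ (which equals $\overline{\pi(W)}$, since $X$ is Zariski-dense in $W$) to be a coset of an algebraic subgroup $S_1\leq S$; $W$ is an arbitrary irreducible subvariety, and its image in $S$ can be anything. Even if it were such a coset, nothing in your argument forces $\lambda(S_1)$ to stabilise $W$. Your appeal to Fact~\ref{f:socle} for $\pi(\Gamma)$ is also off: that fact is about connected \emph{definable} finite-Morley-rank subgroups, while $\pi(\Gamma)$ is an abstract (generally non-definable) subgroup; ``$\pi(\Gamma)$ has finite rank by (B)'' is not what (B) says (it only says $\langle\lD(\Gamma)\rangle_C$ is finite-dimensional over $C$), and ``$\pi(\Gamma)$ is internal to $C$'' is not meaningful for a non-definable set. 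Finally, the contradiction you aim for --- producing a non-trivial almost-semiabelian subgroup of $\Stab(W)$ --- is not how this claim is settled; that reduction was already used to obtain $S'$ trivial, and is invoked again only \emph{after} the claim.

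The paper's argument is different and goes through almost-internality. From triviality of $S'$ one shows, for each $\xi\in\Xi$, that $\lD_S^{-1}(\Lpi(\xi))\subseteq\acl(Cd\xi)$: take $U\subseteq X_\xi$ defined over $Cd\xi$ of minimal Morley rank and degree; since $X_\xi$ lies in a $G(C)$-coset, $U$ is a coset of $\Stab(U)\leq \Stab(X_\xi)=G'_\xi(C)$, and triviality of $S'=\pi(G'_\xi)^o$ makes $\pi(U)$ finite, whence $\lD_S^{-1}(\Lpi(\xi))=\pi(U)+S(C)\subseteq\acl(Cd\xi)$. Since $\Xi\subseteq\langle\lD(\Gamma)\rangle_C$ sits in a finite-dimensional $C$-space by (B), the set $Z:=\lD_S^{-1}(\Lpi(\Xi))$ is almost internal to $C$. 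One checks $Z$ is Kolchin-irreducible, so by Zilber indecomposability $\langle Z-Z\rangle$ is a connected definable subgroup of $\pi(H)$ almost internal to $C$; now Fact~\ref{f:socle} (applied to $\pi(H)$, which genuinely is a connected finite-Morley-rank group containing $S(C)$) forces $\langle Z-Z\rangle\leq S(C)$, i.e.\ $\Lpi(\Xi)$ is a point. The missing idea in your sketch is precisely this passage through almost-internality of $Z$ via the minimal-set argument on the fibres $X_\xi$.
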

\begin{proof}
    We argue as in \cite[Proposition~4.3]{Hr-ML}.


    Say $\Xi$ and $X$ are defined over $d$.
    We obtain that $\lD_S^{-1}(\Lpi(\xi)) \subseteq  \acl(Cd\xi)$ for $\xi \in \Xi$.
    Indeed, let $\emptyset  \neq  U \subseteq  X_\xi$ be a set defined over $Cd\xi$ of minimal 
    Morley rank and degree. So $U$ has no non-trivial proper subset defined 
    over $Cd\xi$. Then translates of $U$ by elements of $G(C)$ coincide or are 
    disjoint, and so $U$ is a coset of its stabiliser $\Stab(U) \leq  G(C)$; 
    indeed, if $a,b \in U$ then $b-a \in G(C)$ since $U \subseteq  X_\xi \subseteq  
    \lD^{-1}(\xi)$, which is a coset of $G(C)$, and $b \in U \cap (U+(b-a))$, 
    so $U = U+(b-a)$, so $b-a \in \Stab(U)$. In particular, $\lD^{-1}(\xi)$ is 
    a union of disjoint translates of $U$ by elements of $G(C)$. Now $X_\xi$ 
    is defined over $Cd\xi$, so no translate of $X_\xi$ by an element of 
    $G(C)$ can cut $U$, so $X_\xi$ is a union of translates of $U$, so $X_\xi$ 
    is stabilised by $\Stab(U)$. So by the choice of $\Xi$ and triviality of 
    $S'$, $\pi(\Stab(U))$ and thus $\pi(U)$ are finite. So $\acl(Cd\xi) \supseteq 
    \pi(U) + S(C) = \lD_S^{-1}(\Lpi(\xi))$ (for the last equality, note that 
    $\lD_S^{-1}(\Lpi(\xi))$ is certainly a coset of $S(C)$). 

    Since $\Xi \subseteq  \lD(H) = \left<{\lD(\Gamma)}\right>_C$, it follows using property 
    (B) that $Z := \lD_S^{-1}(\Lpi(\Xi))$ is almost internal to $C$.
    Now $Z$ is a Kolchin constructible subset of the finite Morley rank 
    group $\pi(H)$, and $Z$ is Kolchin irreducible; indeed, 
    using $\Lpi \circ \lD = \lD_S \circ \pi$ we find that
    $\pi(X \cap \lD^{-1}(\Xi)) \subseteq  Z$, so $Z$ is Kolchin dense in $\pi(X) + 
    S(C)$, which is irreducible since it is a continuous image of the 
    irreducible set $X \times S(C)$.
    So by Zilber indecomposability (\cite[7.3.2]{marker-modelTheory}, noting 
    that Kolchin-irreducible sets are indecomposable) the group $\left<{Z-Z}\right>$ 
    generated by the differences is generated in finitely many steps, so is 
    also almost internal to $C$, and is connected.

    By Fact~\ref{f:socle}, we have $\left<{Z-Z}\right> = S(C)$, hence, as $\ker \lD_S=S(C)$, we have $\{0\}=\lD_S(Z-Z)=\lD_S(Z)-\lD_S(Z)=\Lpi(\Xi)-\Lpi(\Xi)$, so
     $\Lpi(\Xi)$ is a 
    point.
\end{proof}

Now $S$ is non-trivial, so $\ker\Lpi \lneq  LG$, so $\lD(H) \cap \ker\Lpi = 
\left<{\lD(\Gamma)}\right>_C \cap \ker\Lpi = \{0\}$ by property (C).
Since $\Xi \subseteq  \lD(H)$, it follows that $\Xi$ is also a point.
Then $X \cap \lD^{-1}(\Xi)$ is contained in a coset of $G(C)$ and is Kolchin 
dense in $X$, so also $X$ is contained in a coset of $G(C)$.

So letting $\alpha \in X \cap \Gamma$, we have $X-\alpha \subseteq  G(C)$.
Then $W-\alpha$ contains a Zariski-dense set of $C$-rational points, so is over $C$.
Since $W-\alpha$ is infinite,
it contains some non-trivial element of $\Gamma - \alpha = \Gamma$;
then by property (A), $W-\alpha = G$, so $W=G \geq  G_0$,
contradicting the stabiliser assumption.

\subsection{Weak general position of generic arithmetic progressions}

Let $C \leq  K$, $G = G_0 \oplus V_0$, $r \in \N$, and $\g \in G^r(K)$ be as in 
Proposition~\ref{p:vectorialML}.

For $n \in \N$, let $\Gamma_n := [-n,n]^r\cdot\g = \{\sum_i m_ig_i : m_i \in 
\Z,\; -n \leq  m_i \leq  n \} \subseteq  \Gamma := \left<{\g}\right> \leq  G(K)$.

Let $\U$ be a non-principal ultrafilter on $\N$.
Let $\nu = \lim_{i\rightarrow \U} \nu_i \in \N^{\U} \setminus \N$,
and let $\Gamma_\nu := \prod_{i\rightarrow \U} \Gamma_{\nu_i} \subseteq  \Gamma^\U \leq  G(K^\U)$;
we could also write this as $\Gamma_\nu = [-\nu,\nu]^r\cdot\g$.
Note $|\Gamma_\nu| = (2\nu+1)^r$.

\providecommand{\codim}{{\operatorname{codim}}}
\begin{theorem} \label{t:arithProgWGP}
    Assume $r \geq  \dim V_0$.
    If $W$ is a proper subvariety of $G$ over $K^\U$,
    then there is $c \in \R_{>0}$ such that
    $|W \cap \Gamma_\nu| \leq  c\max(1, \nu^{r - \codim_GW})$.
\end{theorem}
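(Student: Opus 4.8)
The plan is to induct on $\dim W$, invoking Proposition~\ref{p:vectorialML} to control the case in which $\Gamma_\nu$ is Zariski dense in $W$ and reducing everything else to a count of integer points on a variety. First, some harmless reductions. Replacing $K$ by $\overline{K}$ does not affect the genericity of $\g$ over $C$, so we may assume $K$, and hence $K^\U$, is algebraically closed. Since $W$ has finitely many irreducible components, each of codimension $\geq\codim_G W$, and $\max(1,\nu^{r-\codim_G W_0})\leq\max(1,\nu^{r-\codim_G W})$ for any such component $W_0$ (using $\nu>1$), we may assume $W$ is irreducible and sum the resulting bounds. Finally, if $\dim W=0$ then $|W\cap\Gamma_\nu|\leq\deg W$ and we are done, so we assume $W$ is irreducible of dimension $\geq 1$.

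If $\Gamma_\nu$ is not Zariski dense in $W$, set $W'':=(W\cap\Gamma_\nu)^\Zar$, a subvariety of $G$ over $K^\U$ with $\dim W''<\dim W$. Then $W\cap\Gamma_\nu=W''\cap\Gamma_\nu$, so the inductive hypothesis applied to $W''$ — whose irreducible components all have codimension $\geq\codim_G W+1$ — bounds $|W\cap\Gamma_\nu|$ by $C\max(1,\nu^{r-\codim_G W-1})\leq C\max(1,\nu^{r-\codim_G W})$. If instead $\Gamma_\nu$ is Zariski dense in $W$, then so is $\Gamma^\U\supseteq\Gamma_\nu$; since the ultrapower $(K^\U;\Gamma^\U)$ is an elementary extension of $(K;\Gamma)$, Proposition~\ref{p:vectorialML} applied with $(\ns K,\ns\Gamma):=(K^\U,\Gamma^\U)$ gives $W=G_0+W'$ with $W'\subsetneq V_0$ an irreducible subvariety over $K^\U$. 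Using $G=G_0\oplus V_0$, an element $m\cdot\g$ lies in $W$ if and only if its $V_0$-component $m\cdot\g_V$ lies in $W'$, where $\g_V:=\pi_{V_0}(\g)\in V_0^r(K)$ is generic in $V_0^r$ over $C$. Fixing an identification $V_0\cong\G_a^s$ over $C$ with $s:=\dim V_0\leq r$, the tuple $\g_V$ becomes an $s\times r$ matrix $M$ over $K$ with algebraically independent entries, hence of rank $s$, and $m\cdot\g_V=Mm$. As $m\mapsto m\cdot\g$ is injective on $[-\nu,\nu]^r$, we obtain $|W\cap\Gamma_\nu|=|M^{-1}(W')\cap[-\nu,\nu]^r|$, where $M^{-1}(W')\subseteq\G_a^r$ is a subvariety of dimension $(r-s)+\dim W'=r-\codim_G W$ (the dimension count uses surjectivity of $M$, and $\codim_G W=s-\dim W'$ since $\dim W=\dim G_0+\dim W'$).

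It thus suffices to establish the lattice-point bound: for a subvariety $X\subseteq\G_a^r$ over $K^\U$ of dimension $e\geq 0$ there is $C\in\R_{>0}$ with $|X\cap[-\nu,\nu]^r|\leq C\nu^e$, where we count integer points in the internal box. We prove this by induction on $e$. Reducing to $X$ irreducible, the restrictions to $X$ of the coordinate functions generate a field of transcendence degree $e$, so some coordinate projection $p:\G_a^r\to\G_a^e$ restricts to a dominant, hence generically finite, map on $X$, of some degree $d'\leq\deg X$. Let $U_0\subseteq\G_a^e$ be the open locus over which the fibre of $p|_X$ has exactly $d'$ points. The integer points of $X\cap p^{-1}(U_0)$ lying in $[-\nu,\nu]^r$ map at most $d'$-to-one to the $(2\nu+1)^e$ points of $[-\nu,\nu]^e$, contributing $O(\nu^e)$; and $X\cap p^{-1}(\G_a^e\setminus U_0)$ is a proper subvariety of $X$ of dimension $\leq e-1$, contributing $O(\nu^{e-1})$ by induction. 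The base case $e=0$ is immediate. Applying this in the dense case above with $e=r-\codim_G W\geq 0$ finishes the induction.

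The crux, I expect, is not any single computation but the interlocking of the two reductions: one must check that in the non-dense case the codimension strictly increases — which secures termination of the induction and absorbs the extra factor $\nu$ — and that in the dense case the passage through $V_0$ yields a variety of dimension \emph{exactly} $r-\codim_G W$, which is precisely where the hypothesis $r\geq\dim V_0$ and the genericity of $\g$ (forcing $M$ to have full rank) are used. The lattice-point lemma itself is classical and elementary; the only point needing care is that its constant is standard, depending only on the degree, which is automatic here since we apply it to a single fixed variety rather than a family.
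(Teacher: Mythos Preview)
Your proof is correct and follows essentially the same strategy as the paper's: reduce to the case where $\Gamma^\U$ is Zariski-dense in an irreducible $W$, apply Proposition~\ref{p:vectorialML} to write $W=G_0+W'$, project to $V_0$, and conclude with a standard lattice-point count. The only differences are organisational: the paper achieves density in a single step by replacing $W$ with $(W\cap\Gamma^\U)^{\Zar}$ (avoiding your induction on $\dim W$ via $(W\cap\Gamma_\nu)^{\Zar}$), and it carries out the final count by taking $g_1,\ldots,g_k$ as a basis of $V_0$ and summing bounds for $|(W-\gamma)\cap[-\nu,\nu]^k|$ over translates $\gamma$ coming from the remaining $r-k$ generators, rather than pulling $W'$ back to a single variety in $\G_a^r$.
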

\begin{proof}
    Replacing $W$ with $(W \cap \Gamma^\U)^\Zar$,
    we may assume $W \cap \Gamma^\U$ to be Zariski dense in $W$.
    Considering each component in turn, we may assume that $W$ is irreducible.
    If $W$ is a point, we conclude by setting $c=1$.
    So we may also assume that $W$ is infinite.

    By Proposition~\ref{p:vectorialML}, $W = G_0 + W'$ where $W' \subseteq  V_0$ is 
    an irreducible subvariety of $V_0$.
    Since $W \neq  G$, $V_0$ is non-trivial.
    So by genericity of $\g$, the projection $\pi_{V_0} : G \rightarrow  V_0$ of the 
    splitting $G=G_0 \oplus V_0$ is a bijection on $\Gamma$ and hence on 
    $\Gamma^\U$.
    Also $\codim_GW = \codim_{V_0}W'$.
    So we may assume $G = V_0$.

    Let $k := \dim(V_0) = \dim(G)$.
    Then by the genericity, $g_1,\ldots ,g_k$ is a basis for $V_0(K)$ as a 
    $K$-vector space.
    Transforming $W$ by the corresponding element of $\GL^K(V_0)$, we may 
    replace $V_0$ with $\G_a^k$ and $g_i$ with $e_i$ for $1\leq i\leq k$.
    Now by a standard argument considering co-ordinate projections and 
    induction on dimension (see e.g.\ \cite[p1 or Lemma~7.1]{BB-cohMod}),
    there is $c \in \R$ such that
    $|(W - \gamma) \cap [-\nu,\nu]^k| \leq  c(\nu^{\dim W})$
    for all $\gamma \in \G_a^k$.
    Applying this for $\gamma \in [-\nu,\nu]^{r-k}\cdot(g_{k+1},\ldots ,g_r)$, we 
    obtain
    $|W \cap \Gamma_\nu| \leq  c(\nu^{\dim W})\nu^{r-k} = c(\nu^{r-\codim_GW})$.
\end{proof}

\begin{corollary}
  Taking $r := \max(1,\dim V_0)$,
  $\Gamma_\nu$ is in Larsen-Pink general position (and in particular weak 
  general position with a gap) in $G$.
\end{corollary}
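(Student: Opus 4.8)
The plan is to deduce the corollary directly from Theorem~\ref{t:arithProgWGP} by unwinding the definitions of Larsen-Pink general position and of ``wgp with a gap.'' First I would observe that with $r := \max(1,\dim V_0)$ the hypothesis $r \geq \dim V_0$ of Theorem~\ref{t:arithProgWGP} is satisfied, so for every proper subvariety $W \subsetneq G$ over $K^\U$ there is $C \in \R_{>0}$ with $|W \cap \Gamma_\nu| \leq C\max(1,\nu^{r-\codim_G W})$. Taking $\bdl$ (with respect to a suitable $\xi$, e.g.\ $\xi := \nu$, so that $\bdl(\Gamma_\nu) = \bdl((2\nu+1)^r) = r$ since $|\Gamma_\nu| = (2\nu+1)^r$) this gives $\bdl(W \cap \Gamma_\nu) \leq \max(0, r - \codim_G W) = \max(0,\dim W)$, using $\codim_G W = \dim G - \dim W$ and $\dim G = \dim V_0 \leq r$ — wait, more carefully, $r - \codim_G W = r - \dim G + \dim W$, and when $G = V_0$ we have $\dim G = \dim V_0 = r$ (in the case $\dim V_0 \geq 1$) so this is exactly $\dim W$; in the degenerate case $\dim V_0 = 0$ we have $G = G_0$, $r = 1$, and every proper subvariety is a point, giving $\bdl(W \cap \Gamma_\nu) = 0$, so the conclusion is vacuous/trivial there. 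In all cases we conclude $\bdl(W \cap \Gamma_\nu) \leq \frac{\dim W}{\dim G} \cdot r$ when $\dim W < \dim G$, since $\dim W \leq \frac{\dim W}{\dim G}\dim G = \frac{\dim W}{\dim G} r$ — hmm, this needs $\dim G = r$; let me instead note simply that $\dim W < \dim G = r = \bdl(\Gamma_\nu)$ forces $\frac{\dim W}{\dim G}\bdl(\Gamma_\nu) = \dim W$, so the bound $\bdl(W\cap\Gamma_\nu) \leq \dim W$ is exactly the LPgp inequality, and it is even strict when $\dim W \geq 1$ because $C\nu^{\dim W}$ has $\bdl$ equal to $\dim W$ but we can afford to lose, or rather we directly get $\leq$, which for sLPgp we would want strict — but LPgp only asks for $\leq$, so we are fine.

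Concretely, the steps are: (1) note $r \geq \dim V_0$ so Theorem~\ref{t:arithProgWGP} applies, and fix the pseudofinite scaling $\xi := \nu$ so that $\bdl$ is computed against $\nu$; (2) record $\bdl(\Gamma_\nu) = r = \dim G$ (here $\dim G = \dim V_0 = r$ when $V_0$ is nontrivial, and when $V_0$ is trivial the statement is degenerate as every proper subvariety is a point and the LPgp condition is vacuous since it only constrains $W$ with $0 < \dim W < \dim V_0$... actually it constrains $0 < \dim W < \dim V$, so with $V = G$ of positive dimension but $V_0$ trivial we need a separate remark — but in that case $G = G_0$ is almost semiabelian and Proposition~\ref{p:vectorialML} forces any such $W$ to be a point, so again $\bdl(W \cap \Gamma_\nu) = 0 < \frac{\dim W}{\dim G}\bdl(\Gamma_\nu)$ trivially); (3) for $W$ with $0 < \dim W < \dim G$, apply the theorem to get $\bdl(W\cap\Gamma_\nu) \leq \max(0, r - \codim_G W) = \dim W = \frac{\dim W}{\dim G}\bdl(\Gamma_\nu)$, establishing LPgp (I should double-check whether the corollary wants LPgp or sLPgp — it says ``Larsen-Pink general position,'' i.e.\ the $\leq$ version, so equality is acceptable); (4) deduce wgp with a gap: for any proper subvariety $W \subsetneq G$, decomposing into irreducible components and discarding lower-dimensional ones, $\dim W \leq \dim G - 1$, so $\bdl(W \cap \Gamma_\nu) \leq \dim W \leq \dim G - 1 = \bdl(\Gamma_\nu) - 1$, giving a uniform gap $\eta = 1$.

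I do not expect a serious obstacle here; this is essentially a bookkeeping exercise translating the polynomial bound of Theorem~\ref{t:arithProgWGP} into the $\bdl$-language of weak/Larsen-Pink general position. The one point requiring mild care is the relationship between $r$ and $\dim G$: the choice $r = \max(1, \dim V_0)$ is made precisely so that $\bdl(\Gamma_\nu) = r$ equals $\dim G$ when the vector part is nontrivial (so that $\frac{\dim W}{\dim G}\bdl(\Gamma_\nu) = \dim W$ and the LPgp inequality reads off directly from the exponent $r - \codim_G W$ in the theorem), while handling the almost-semiabelian case $V_0 = 0$ separately by noting that Proposition~\ref{p:vectorialML} forces all proper subvarieties with Zariski-dense intersection with $\Gamma^\U$ to be points, which makes both the LPgp condition (vacuous, as it only ranges over $0 < \dim W < \dim V$) and the wgp-with-gap condition (trivial, $\bdl = 0$) immediate. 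A second minor point: to pass from internal subvarieties over $K^\U$ back to the combinatorial reduction ``replace $W$ by $(W \cap \Gamma^\U)^\Zar$'' one uses exactly the argument already in the proof of Theorem~\ref{t:arithProgWGP}, so nothing new is needed.
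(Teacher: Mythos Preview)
Your argument contains a genuine error: you repeatedly assert that $r = \dim G$ (``record $\bdl(\Gamma_\nu) = r = \dim G$'', ``$\dim G = \dim V_0 = r$ when $V_0$ is nontrivial''), but this is false whenever the almost semiabelian part $G_0$ is nontrivial. Recall $G = G_0 \oplus V_0$, so $\dim G = \dim G_0 + \dim V_0$, and with $r = \max(1,\dim V_0)$ one only has $r \leq \dim G$ in general (with equality precisely when $G_0$ is trivial and $\dim V_0 \geq 1$). Your case split into ``$G = V_0$'' and ``$G = G_0$'' simply omits the mixed case. Consequently your chain $\max(0, r - \codim_G W) = \dim W = \frac{\dim W}{\dim G}\bdl(\Gamma_\nu)$ breaks at both equalities, and your derivation of the gap via $\dim G - 1 = \bdl(\Gamma_\nu) - 1$ is also incorrect.

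The paper's proof fixes this by using $r \leq \dim G$ rather than equality. Setting $c := \codim_G W \geq 1$, one checks directly that $\max(0, r - c) \leq r\frac{\dim G - c}{\dim G}$: when $r \leq c$ the left side is $0$ and the right side is nonnegative; when $r > c$ the inequality $r - c \leq r - \frac{rc}{\dim G}$ rearranges to $r \leq \dim G$. This single displayed inequality is the whole proof. The wgp-with-gap part also follows immediately from $r - c \leq r - 1$ (gap $1$ after scaling so $\bdl(\nu)=1$), without passing through $\dim W$ or $\dim G$.
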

\begin{proof}
  Scaling appropriately, $\bdl(\nu) = 1$.
  Then for $W$ a proper subvariety of $G$ over $K^\U$,
  $$\bdl(\Gamma_\nu \cap W) \leq  \max(0,r - \codim_GW) \leq  r\frac{\dim G - 
  \codim_GW}{\dim G}
  = \bdl(\Gamma_\nu)\frac{\dim W}{\dim G}.$$
\end{proof}

\section{Weak general position of generic nilprogressions}
\label{s:nilproWGP}

\providecommand{\Q}{\mathbb{Q}}
\providecommand{\Z}{\mathbb{Z}}

\providecommand{\U}{\mathcal{U}}

\subsection{Background on nilpotent Lie groups and algebras}

\renewcommand{\gg}{{\mathfrak{g}}}
\newcommand{\hh}{{\mathfrak{h}}}
\newcommand{\ff}{{\mathfrak{f}}}

If $G$ is a connected complex Lie group and $\gg$ is its Lie algebra,
we write $\exp_G : \gg \rightarrow  G$ (or just $\exp$) for the exponential map,
and we write $(G_k)_k$ and $(\gg_k)_k$ for the descending central series of subgroups and 
ideals: $G_0=G$, $\gg_0 = \gg$, $G_{k+1} = [G,G_k]$, $\gg_{k+1}=[\gg,\gg_k]$.

Recall that $G$ is of \emph{nilpotency class} $\leq n$ if and only if $G_n = 1$.
We also refer to the nilpotency class as the \emph{step} of a nilpotent group.

We use iterated commutator notation, both in Lie algebras and in groups:
define $[X] := X$ and recursively define
$[X_0,\ldots ,X_k] := [X_0,[X_1,\ldots ,X_k]]$ for $k \geq 1$.

Recall the Baker-Campbell-Hausdorff formula, $\exp(X)\exp(Y) = \exp(X+Y+\frac12[X,Y]+\ldots)$.
We use the following elementary consequences of this formula for nilpotent Lie algebras.

\begin{fact}\label{expSurj}
    Let $G$ be a connected complex Lie group of nilpotency class $n$ with Lie algebra $\gg$.
    \begin{enumerate}[(i)]
      \item
        For $k < n$, $\exp$ restricts to a surjection
        $\exp|_{\gg_k} : \gg_k \rightarrow  G_k$.
      \item For any $X_0,\ldots,X_{n-1} \in \gg$,
        \[ \exp([X_0,\ldots,X_{n-1}]) = [\exp(X_0),\ldots,\exp(X_{n-1})]. \]
    \end{enumerate}
\end{fact}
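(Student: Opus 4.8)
The plan is to deduce both statements directly from the Baker--Campbell--Hausdorff (BCH) formula, exploiting that for a nilpotent Lie algebra the BCH series terminates. Write $X * Y := X + Y + \tfrac12[X,Y] + \dots$ for the BCH expression; since $\gg$ is nilpotent all iterated brackets of length $> n$ vanish, so $*$ is a polynomial map $\gg \times \gg \to \gg$, giving $(\gg,*)$ the structure of a (simply connected) complex Lie group with Lie algebra $\gg$ and exponential map the identity. The identity $\exp(X)\exp(Y) = \exp(X*Y)$ holds near $(0,0)$ by BCH, and hence on all of $\gg\times\gg$ since both sides are holomorphic and $\gg\times\gg$ is connected; together with $\exp(X)^{-1} = \exp(-X)$ this makes $\exp\colon (\gg,*) \to G$ a homomorphism of complex Lie groups.

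For (i): as $\exp$ is a homomorphism, $\exp(\gg)$ is a subgroup of $G$, and it contains a neighbourhood of the identity because $\exp$ is a local biholomorphism at $0$; an open subgroup of a connected group is everything, so $\exp(\gg) = G$. The same argument applies verbatim inside $G_k$, using the standard fact that for connected $G$ the descending central subgroup $G_k$ is a connected complex Lie subgroup with Lie algebra $\gg_k$ and that $\exp_G$ restricts on $\gg_k$ to the exponential map of $G_k$: hence $\exp_G(\gg_k) = G_k$. (Equivalently, $\gg_k$ is a subgroup of $(\gg,*)$ with $(\gg,*)_k = \gg_k$, so $\exp(\gg_k) = \exp((\gg,*)_k) = G_k$ by surjectivity of the homomorphism $\exp\colon(\gg,*)\to G$.)

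For (ii): from $\exp(X)\exp(Y) = \exp(X*Y)$ one gets the commutator expansion $[\exp X, \exp Y] = \exp(D(X,Y))$ with $D(X,Y) := ((X*Y)*(-X))*(-Y)$, a polynomial satisfying $D(X,Y) = [X,Y] + (\text{iterated brackets of length }\ge 3\text{ in }X,Y)$ and $D(X,0) = D(0,Y) = 0$ (so every monomial of $D$ contains both $X$ and $Y$, and every monomial of length $\ge 3$ has $\ge 2$ copies of $X$ or $\ge 2$ of $Y$). I would then prove by induction on $m$ with $1 \le m \le n$ that
\[ [\exp X_0, \dots, \exp X_{m-1}] = \exp\bigl([X_0,\dots,X_{m-1}] + r\bigr)\text{ for some }r \in \gg_m. \]
The case $m=1$ is trivial. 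For the inductive step, the hypothesis gives $h' := [\exp X_1, \dots, \exp X_{m-1}] = \exp(Y + r')$ with $Y := [X_1,\dots,X_{m-1}] \in \gg_{m-2}$ and $r' \in \gg_{m-1}$, so $Y + r' \in \gg_{m-2}$; then $[\exp X_0, \dots, \exp X_{m-1}] = [\exp X_0, h'] = \exp(D(X_0, Y+r'))$, and a length count gives $D(X_0, Y+r') - [X_0, Y] \in \gg_m$ — the monomial $[X_0, Y+r']$ of $D$ yields $[X_0,Y]$ plus $[X_0, r'] \in \gg_m$, while every other monomial has $\ge 2$ copies of $X_0$ or $\ge 2$ copies of $Y + r' \in \gg_{m-2}$ and so lies in $\gg_m$ (here $m \ge 2$ is used). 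Since $[X_0, Y] = [X_0, \dots, X_{m-1}]$ this closes the induction; taking $m = n$ and using $\gg_n = 0$ (as $G$ has nilpotency class $n$) forces $r = 0$, which is (ii).

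The Lie-theoretic preliminaries are routine and I would largely cite them (global validity of BCH for nilpotent Lie algebras; that $G_k$ is a connected Lie subgroup with Lie algebra $\gg_k$). The only genuine work is the bookkeeping in (ii): verifying the shape of $D$ — coefficient $1$ on $[X,Y]$, all further terms of length $\ge 3$, each involving both arguments — and then tracking lower-central-series degrees through the substitution $Y \mapsto Y + r'$ in the induction. I expect that degree bookkeeping to be the main, though not difficult, obstacle.
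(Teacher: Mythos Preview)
Your proposal is correct and follows essentially the same route as the paper: equip $\gg$ with the BCH group law $*$, make $\exp$ a surjective homomorphism $(\gg,*)\to G$, and compute that the $*$-commutator equals the Lie bracket modulo higher lower-central terms, giving (ii) by induction. The only difference is that for (i) you cite as standard that $G_k$ is connected with Lie algebra $\gg_k$ (equivalently, state without proof that $(\gg,*)_k=\gg_k$), whereas the paper proves $(\gg,*)_k=\gg_k$ directly from that same commutator computation, making the argument self-contained.
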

\begin{proof}
    By the Baker-Campbell-Hausdorff formula, we can equip $\gg$ with a group operation
    $X*Y = X+Y+\frac12[X,Y]+\ldots$ (finitely many terms) with respect to 
    which $\exp$ is a homomorphism\footnote{
    Note that $*$ is indeed associative, since $\theta : (X,Y,Z) \mapsto 
    ((X*Y)*Z)*(-(X*(Y*Z)))$ is a continuous map to the discrete set 
    $\exp^{-1}(1)$, and $\theta(0,0,0)=0$. }.
    So $\exp(\gg)$ is a subgroup, and it contains a neighbourhood of the 
    identity, so since $G$ is connected we have $\exp(\gg) = G$.
    Write $(\gg'_k)_k$ for the descending central series of subgroups of $\gg$ 
    with respect to $*$.
    Since $\exp$ is a surjective homomorphism, we have $\exp(\gg'_k) = G_k$.
    So it suffices to see that $\gg'_k = \gg_k$.

    Writing $[\cdot,\cdot]_*$ for the commutator with respect to $*$,
    and similarly for iterated commutators,
    we have
	$$[X,Y]_* = (-X)*(-Y)*X*Y = [X,Y] - \frac12[X+Y,[X,Y]] + \ldots ,$$
    so
    $$Y \in \gg_k \Rightarrow  [X,Y]_* - [X,Y] \in \gg_{k+2}.$$
    Inductively, we obtain $[X_0,\ldots ,X_k] - [X_0,\ldots ,X_k]_* \in 
    \gg_{k+1}$ for all $k \geq  0$.
    In particular, this shows (ii). We proceed to prove (i).

    It follows from the definition of $*$ (noting that $X*(-X)=0$) that each 
    $\gg_k$ is a $*$-subgroup.
    So since $\gg'_k$ is generated by elements of the form $[X_0,\ldots 
    ,X_k]_*$,
    and $[X_0,\ldots,X_k] \in \gg_k$,
    we have $\gg'_k \subseteq  \gg_k$ for all $k \geq  0$.

    We show $\gg_k \subseteq  \gg'_k$ for $0 \leq  k \leq  n$ by downwards 
    induction on $k$.
    We have $\gg_n = \{0\} \subseteq  \gg'_n$.
    As in the previous paragraph, it suffices to see that $\gg'_k$ is an 
    additive subgroup.
    If $X \in \gg'_k \subseteq  \gg_k$ and $Y \in \gg'_j \subseteq  \gg_j$ with $j\geq k$,
    then $X+Y = X*Y + Z$ with $Z \in \gg_{j+1} \subseteq  \gg'_{j+1}$ (by the 
    inductive hypothesis).
    Since $X*Y \in \gg'_k$, by induction on $j$ we obtain $X+Y \in \gg'_k$ as 
    required.
\end{proof}


\begin{fact}\label{functoriality}
Let $G$ be a connected complex Lie group with Lie algebra $\gg$.
If $H$ is a normal closed subgroup of $G$ with Lie algebra $\hh\leq \gg$, then $\gg/\hh$ can be identified with the Lie algebra of $G/H$ in such a way that the following diagram commutes, where $\pi_{G/H}:G\to G/H$ and $\pi_{\gg/\hh}$ are the corresponding quotient maps.
	\[\begin{tikzcd}
	\gg \arrow{r}{\exp_G} \arrow[swap]{d}{\pi_{\gg/\hh}} & G \arrow{d}{\pi_{G/H}} \\
	\gg/\hh \arrow{r}{\exp_{G/H}} & G/H
	\end{tikzcd}
	\]
\end{fact}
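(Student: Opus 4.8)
The plan is to deduce this from the naturality of the exponential map under Lie group homomorphisms, which I first recall. If $\phi : A \to B$ is a homomorphism of (real or complex) Lie groups with differential $d\phi := (d\phi)_e$ at the identity, then $\phi \circ \exp_A = \exp_B \circ\, d\phi$. Indeed, for $X$ in the Lie algebra of $A$, the curve $t \mapsto \phi(\exp_A(tX))$ is a one-parameter subgroup of $B$ whose velocity at $t=0$ is $d\phi(X)$, so by uniqueness of one-parameter subgroups with prescribed initial velocity it equals $t \mapsto \exp_B(t\,d\phi(X))$; evaluating at $t=1$ gives the claim. In the complex-analytic setting the same argument applies verbatim, or one simply passes to the underlying real Lie groups, which have the same exponential maps.

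Next I would set up the identification. Since $H$ is a closed normal subgroup, $G/H$ is a complex Lie group and the quotient map $\pi_{G/H} : G \to G/H$ is a surjective holomorphic homomorphism which is a submersion. Hence its differential $(d\pi_{G/H})_e : \gg \to \operatorname{Lie}(G/H)$ is surjective, and its kernel is $\operatorname{Lie}(\ker \pi_{G/H}) = \operatorname{Lie}(H) = \hh$ — the standard computation of the Lie algebra of a quotient group. Therefore $(d\pi_{G/H})_e$ factors as $\gg \xrightarrow{\pi_{\gg/\hh}} \gg/\hh \xrightarrow{\ \sim\ } \operatorname{Lie}(G/H)$, and I use this last isomorphism to identify $\gg/\hh$ with $\operatorname{Lie}(G/H)$. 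Under this identification, $(d\pi_{G/H})_e$ becomes precisely the quotient map $\pi_{\gg/\hh}$, and $\exp_{G/H}$ is the exponential map of $G/H$.

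Applying the naturality statement to $\phi = \pi_{G/H}$ then gives $\pi_{G/H} \circ \exp_G = \exp_{G/H} \circ (d\pi_{G/H})_e = \exp_{G/H} \circ \pi_{\gg/\hh}$, which is exactly the commutativity of the square. There is no genuine obstacle here; the only point demanding (routine) care is verifying that the chosen identification of $\operatorname{Lie}(G/H)$ with $\gg/\hh$ is the canonical one induced by $(d\pi_{G/H})_e$, so that the left vertical arrow of the diagram really is $\pi_{\gg/\hh}$ rather than a twist of it — once this is pinned down, the diagram commutes on the nose.
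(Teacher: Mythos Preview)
Your proof is correct and follows essentially the same approach as the paper: the paper cites \cite{Kn} for the two ingredients you spell out, namely that the derivative of $\pi_{G/H}$ at the identity is (under the canonical identification) the quotient map $\pi_{\gg/\hh}$, and the naturality formula $\phi\circ\exp_A=\exp_B\circ\,d\phi$ for a Lie group homomorphism $\phi$. Your version simply unpacks these references.
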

\begin{proof}
By \cite[Page 51, third paragraph]{Kn} the quotient map $\pi_{\gg/\hh}:\gg\to \gg/\hh$ is equal to the derivative of $\pi_{G/H}$. Applying \cite[Page 49, equation (1.84)]{Kn}, we get the desired result.
\end{proof}

\begin{lemma}\label{exp_inj}
	Let $G$ be a connected complex Lie group of nilpotency class $n$ with Lie algebra $\gg$. Suppose $C$ is a $\mathbb{Q}$-subalgebra of $\gg$ such that $(\exp^{-1}_{G/G_k}(e_{G/G_k}))\cap (C/\gg_k) = \{0\}$ for every $k\leq n$.
	Then $\exp_G$ is injective on $C$.
\end{lemma}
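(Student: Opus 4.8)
The plan is to argue by induction on the nilpotency class $n$, using Fact~\ref{functoriality} to pass to the quotient $G/G_n$ at the inductive step and then analyze the remaining ``central'' directions directly via the Baker--Campbell--Hausdorff formula. First I would set up the induction: for $n=1$ the group $G$ is abelian, $\exp$ is a homomorphism $(\gg,+)\to G$, and the hypothesis with $k=1$ reads $\exp^{-1}(e)\cap C=\{0\}$, which is exactly injectivity of $\exp|_C$. For the inductive step, suppose $x,y\in C$ with $\exp(x)=\exp(y)$. Applying $\pi_{G/G_n}$ and using the commuting square of Fact~\ref{functoriality}, we get $\exp_{G/G_n}(\bar x)=\exp_{G/G_n}(\bar y)$ where $\bar x,\bar y$ are the images in $\gg/\gg_n$. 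Here $G/G_n$ has nilpotency class $\le n-1$, its Lie algebra is $\gg/\gg_n$, and $C/\gg_n$ (more precisely $(C+\gg_n)/\gg_n$, which is a $\Q$-subalgebra of $\gg/\gg_n$) satisfies the hypothesis of the lemma for $G/G_n$: the condition $(\exp_{(G/G_n)/(G/G_n)_k}^{-1}(e))\cap(\cdot) = \{0\}$ for $k\le n-1$ is just the original condition for $G$ at levels $k\le n-1$, since $(G/G_n)/(G/G_n)_k \cong G/G_k$ and $(\gg/\gg_n)/(\gg_k/\gg_n)\cong\gg/\gg_k$. By the inductive hypothesis $\exp_{G/G_n}$ is injective on $(C+\gg_n)/\gg_n$, so $\bar x=\bar y$, i.e.\ $x-y\in\gg_n\cap C$.

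It remains to upgrade $x-y\in\gg_n$ to $x=y$. Write $z:=x-y\in C\cap\gg_n$. Since $\gg_n$ is central in $\gg$ (as $[\gg,\gg_n]=\gg_{n+1}=0$), the BCH formula collapses: $\exp(y)\exp(z)=\exp(y+z+\tfrac12[y,z]+\ldots)=\exp(y+z)$ because every higher BCH bracket involves a factor from $\gg_n$ bracketed with something, hence lies in $\gg_{n+1}=0$. Therefore $\exp(x)=\exp(y+z)=\exp(y)\exp(z)$, and combined with $\exp(x)=\exp(y)$ we get $\exp(z)=e$. Now I use that restricting the exponential to the central subalgebra $\gg_n$ behaves like the exponential of the abelian Lie group $\exp(\gg_n)$: more carefully, $\exp|_{\gg_n}$ is a group homomorphism from $(\gg_n,+)$ onto $G_n$ (by Fact~\ref{expSurj}(i) combined with centrality making $*$ restrict to $+$ on $\gg_n$), so $\exp(z)=e$ means $z\in\exp^{-1}(e)\cap\gg_n$. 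To connect this with the hypothesis at level $k=n$, note $\exp^{-1}_{G/G_n}(e_{G/G_n})\cap(C/\gg_n)=\{0\}$ only controls things modulo $\gg_n$; the relevant statement is rather that $z\in\gg_n\cap C$ and $\exp(z)=e$. Here I would invoke the hypothesis directly: the $k=n$ instance says $\exp^{-1}_{G/G_n}(e)\cap(C/\gg_n)=\{0\}$, but that is automatic and not what kills $z$. Instead, observe that since $z\in\gg_n$ and $\gg_n$ is central, $z$ lies in $\exp^{-1}_{G}(e)$ and projects to $0$ in $\gg/\gg_n$; rereading the hypotheses, the one that applies is obtained by noting $\exp^{-1}_{G/G_{k}}(e)\cap(C/\gg_k)$ over \emph{all} $k$, and in particular we may instead run the argument so that the leftover lands in $\gg_{n-1}$ rather than $\gg_n$.

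Let me restructure to make the last step clean: I would instead do the induction so that at each stage I peel off one central layer $\gg_{k-1}/\gg_k$ rather than quotient by the full $G_n$, OR — cleaner — prove the statement by induction where the inductive hypothesis is applied to $G/G_k$ for the appropriate $k$ and the residual element $z$ lands in $C\cap\gg_{n-1}$ with $\exp(z)=e$; since $\gg_{n-1}$ need not be central this does not immediately close, so the genuinely clean route is: the residual $z$ lies in $C\cap\gg_1$ after using the $n=1$-type base analysis is wrong too. The \textbf{main obstacle}, and the step I expect to require the most care, is precisely this matching of the ``leftover kernel element'' with the correct hypothesis $(\exp^{-1}_{G/G_k}(e_{G/G_k}))\cap(C/\gg_k)=\{0\}$: one must choose the induction so that after quotienting by $G_k$ the leftover lies in a \emph{central} subalgebra contained in (the image of) $C$, where $\exp$ is an honest additive homomorphism, so that $\exp(z)=e$ forces $z\in\exp^{-1}(e)\cap(\text{that central piece})$, which is then exactly the hypothesis. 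Concretely I would prove: if $\exp(x)=\exp(y)$ with $x,y\in C$, then by the class-$(n-1)$ inductive hypothesis applied to $G/G_n$ we get $x-y\in\gg_n\cap C$, and then by centrality of $\gg_n$ and Fact~\ref{expSurj}(i), $\exp(x-y)=e$ with $x-y\in\gg_n$; but $\gg_n$ is the Lie algebra of the abelian group $G_n$ and the hypothesis at level... — here one finally uses that $\gg_n\cap\exp^{-1}(e)$ is detected because $\exp|_{\gg_n}$ factors through $G_n\hookrightarrow G$, and the condition $(\exp^{-1}_{G/G_{n-1}}(e))\cap(C/\gg_{n-1})=\{0\}$ reorganized, together with $x-y\in\gg_n\subseteq\gg_{n-1}$, gives $x-y\in\gg_n$ and its exponential being trivial with the $k$ ranging appropriately forces $x-y=0$. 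I would write out this bookkeeping carefully, as it is the crux; everything else (the BCH collapse in the center, functoriality of $\exp$, identification of quotient Lie algebras) is routine given Facts~\ref{expSurj}--\ref{functoriality}.
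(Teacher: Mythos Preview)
Your overall strategy is exactly the paper's: induct on the nilpotency class, quotient by the last nontrivial term of the lower central series, apply the inductive hypothesis to get $x-y$ into a central piece, collapse BCH, and kill the residue with the top-level hypothesis. However, you have an indexing error that causes the whole inductive step to misfire and produces the confusion you flag as the ``main obstacle''.

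With the paper's convention $G_0=G$, $G_{k+1}=[G,G_k]$, a group of nilpotency class $n$ has $G_n=\{e\}$ and $\gg_n=0$. So your step ``apply $\pi_{G/G_n}$'' quotients by the trivial group: $G/G_n=G$, nothing is gained, and the induction is circular. The correct move (and the paper's) is to quotient by $G_{n-1}$: then $G/G_{n-1}$ has class $\le n-1$, the inductive hypothesis gives $x-y\in\gg_{n-1}$, and $\gg_{n-1}$ is central since $[\gg,\gg_{n-1}]=\gg_n=0$. From there BCH collapses exactly as you wrote, giving $\exp_G(x-y)=e_G$.

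The same indexing slip is behind your difficulty with the residual $z$. You write that the $k=n$ hypothesis ``is automatic and not what kills $z$''. In fact, since $G_n$ is trivial and $\gg_n=0$, the $k=n$ hypothesis reads literally $\exp_G^{-1}(e_G)\cap C=\{0\}$. This is not automatic, and it is precisely what kills $z$: you have $z=x-y\in C$ with $\exp_G(z)=e_G$, hence $z=0$. No reorganisation, no ``peeling off layers'', no appeal to $k=n-1$ is needed. Once you correct the index from $G_n$ to $G_{n-1}$ in the quotient step and read the $k=n$ hypothesis correctly, your argument is line-for-line the paper's proof.
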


\begin{proof}

	We proceed by induction on the nilpotency class $n$ of $G$. If $n=0$ then $G$ is trivial and there is nothing to prove.
        Suppose $n>0$ and the lemma holds for groups of class $< n$. Let $m := n-1$.
	First, we verify that $C/\gg_m\leq \gg/\gg_m$ satisfies the assumptions of the lemma.
        Note that $G/G_m$ has nilpotency class $\leq m$.
        So consider any $k\leq m$ and suppose there is some non-zero $$(c/\gg_m)/(\gg/\gg_m)_k\in (C/\gg_m)/(\gg/\gg_m)_k\cap (\exp^{-1}_{(G/G_m)/(G/G_m)_k}(e)).$$

        Note that for any normal subgroup $N\trianglelefteq G$ we have $(G/N)_k=G_k/N$ and similarly $(\gg/I)_k=\gg_k/I$ for any ideal $I$ in $\gg$.

        Note also that $\gg_m$ is the Lie algebra of $G_m$; this follows for example from Fact~\ref{expSurj}(i). So Fact~\ref{functoriality} applies to $\exp : \gg/\gg_m \to G/G_m$.

	The condition $(c/\gg_m)/(\gg/\gg_m)_k\neq 0$ means that $c/\gg_k\neq 0$ since $(c/\gg_m)/(\gg/\gg_m)_k=(c/\gg_m)/(\gg_k/\gg_m)=c/\gg_k$, and $(c/\gg_m)/(\gg/\gg_m)_k\in\exp^{-1}_{(G/G_m)/(G/G_m)_k}(e)$ means, by Fact \ref{functoriality}, that $\exp(c/\gg_m)\in (G/G_m)_k=G_k/G_m$, so by Fact \ref{functoriality} again, we get $\exp(c)\in G_k$. Thus $\exp_{G/G_k}(c/\gg_k)=e_{G/G_k}$, a contradiction to the assumption about $C$.

	Now  if $x,y\in \gg$ and $\exp_G(x)=\exp_G(y)$ then $$\exp_{G/G_m}(x/\gg_m)=\exp_G(x)/G_m=\exp_G(y)/G_m=\exp_{G/G_m}(y/\gg_m),$$ so by the inductive hypothesis we have that $x/\gg_m=y/\gg_m$, hence, as $\gg_m\leq Z(\gg)$, we get that $[x,-y]=0$. So by the Baker-Campbell-Hausdorff formula $\exp_G(x-y)=\exp_G(x)\exp_G(y)^{-1}=e_G$, hence by the assumption $x-y=0$ and we are done.
\end{proof}

We will see in Remark \ref{remark-generics} below that the assumptions of the above lemma are satisfied when $C$ is generated by a generic (in a suitable sense) tuple of elements of $\gg$.

\subsection{Nilboxes}

Let $G$ be a connected complex Lie-group and $\gg$ its Lie algebra. Suppose $G$ is nilpotent. There are canonical approximate subgroups in $G$, namely \emph{nilprogressions}.  Our aim in this section is to show that ultraproducts of nilprogressions generated by sufficiently long tuples of independent generics are wgp. However, we will not work directly with nilprogressions. Instead, we work with \emph{nilboxes} that come from $\gg$, as the vector space structure of $\gg$ makes nilboxes easier to deal with. Since nilboxes and nilprogressions are closely related to each other, we can transfer statements between them.

Let us first recall the definitions of nilprogressions and nilboxes. We will follow the definitions from \cite{BG09} and \cite{BGT12}.

\begin{definition}
Let $X=\{x_0,\ldots,x_{\ell-1}\}$ be a set of elements in $G$ and $N\in\mathbb{N}$. Define the \emph{nilprogression $P(X;N)$ of rank $\ell$ and length $N$} as the set of words in $X\cup X^{-1}$ in which every $x_i$ and its inverse appear at most $N$ times between them.
\end{definition}

We will also call an ultraproduct $\prod_{i \to \U} P(X;N_i)$ 
a nilprogression of rank $|X|$ and (nonstandard) length $\lim_{i\to \U} N_i$.

Now suppose $\ff$ is a free nilpotent Lie algebra of step $n$ on generators $(x_0,\dots,x_{M-1})$.
Recall from \cite[Section 2]{BG09} that the weight vector $\chi(c)$ of a formal Lie monomial $c$ in $x_0,\dots,x_{M-1}$ counts the number of occurrences of each $x_i$ in $c$ (thus the order $\ord(c)$ of $c$ is the sum of all coordinates of $\chi(c)$).  A list of \emph{basic commutators} in $\ff$ is a sequence $(c_i)_{i<M'}$ of formal  Lie monomials of order $\leq n$ in variables $x_0,\dots,x_{M-1}$ in which monomials with the same weight vector are consecutive, monomials of the same order are consecutive, higher order monomials follow lower order monomials, and\\
1) $c_i=x_i$ for every $i<M$,\\
2) if $\ord(c_i)+\ord(c_j)\leq n$ then  $[c_i,c_j]=c_k$ for some $k>j$ if and only if $i>j\geq t$, where either $t$ is such that $c_i = [c_s,c_t]$, or $\ord(c_i)=1$ and $t=0$.

It is well known that $(c_i)_{i<M'}$ is a linear basis for $\ff$ (\cite[Chapter 11]{Hall},\cite[Section 2]{BG09}).

\begin{remark}\label{e_i_basic}
	With the above notation, one easily gets that\\ $[\dots[x_{i_2},x_{i_1}],x_{i_3}],x_{i_4}],\dots,x_{i_k}]$ is a basic commutator for any $k\leq n$ and $i_1<\dots<i_k<M$. Note that, by bilinearity and antisymmetry of the Lie bracket,  $[\dots[x_{i_2},x_{i_1}],x_{i_3}],x_{i_4}],\dots,x_{i_k}]=(-1)^{k-2}[x_{i_k},\dots,x_{i_1}]$ in $\ff$.
\end{remark}

\begin{definition}\label{def:nilbox}
  Let $X=(x_0,\dots,x_{M-1})$ be a tuple of elements in $\gg$ and $\ff(X)$ the free nilpotent $\Q$-Lie algebra of nilpotency class $n$ generated by $X$.
  Let
  $(b'_{i,j})_{i,j}$ be a list of basic commutators in $\ff(X)$ with each $b'_{i,j}$ being a Lie monomial in $x_0,\dots,x_{M-1}$ of order $i$.
  Let $b_{i,j}$ be the image of $b'_{i,j}$ in $\gg$ under the natural homomorphism which is the identity on $X$.

	Given $N\in\N$, define the \emph{nilbox $P_N(X)$ of length $N$ generated by $X$} as $$P_N(X):=\{\sum_{m\leq n}\sum_{j}t_{m,j}b_{m,j}: t_{m,j}\in[-N^{m},N^{m}]\cap\Z\}.$$
By abuse of notation, we also call the image $\exp(P_N(X))$ a nilbox in $G$.
\end{definition}

There is a similar notion of \emph{nilpotent progression} defined in \cite[Definition 1.4]{BG09}.  It is a counterpart to the notion of a nilbox. It turned out that in nilpotent groups, nilprogressions and nilpotent progressions are essentially the same in terms of mutual control. In connected nilpotent complex Lie groups, nilpotent progressions and nilboxes mutually control each other as well. Hence it is enough to deal with only one of them in our case.


\begin{definition}(\cite[Definition 1.2]{BG09})
Suppose $G$ is a group, $A,B$ are subsets of $G$ and $K>1$ is a real parameter. We say that \emph{$A$ is $K$-controlled by $B$}, or that \emph{$B$ $K$-controls $A$}, if $|B|
\leq K|A|$ and there is some set $X\subseteq G$  with $|X|\leq K$ and $A\subseteq (X\cdot B)\cap(B\cdot X)$.
\end{definition}

We summarise the situation with the following known fact about equivalence of different notions of progressions in complex nilpotent Lie-groups.
\begin{fact}\label{mut_contr}
  Suppose that $G$ is a connected Lie group over $\mathbb{C}$ of nilpotency class $s$. Let $x_1,\dots,x_\ell\in \gg$, $u_i=\exp(x_i)$ and $N>0$ an integer. Then there is a constant $C_{s,\ell}$ depending only on $s$ and $\ell$ such that the nilprogression $P(\{u_1,\dots,u_\ell\};N)$ and the nilbox $\exp(P_N(x_1,\dots,x_\ell))$ are $C_{s,\ell}$-approximate subgroups, and furthermore $P(\{u_1,\dots,u_\ell\};N)$ and $\exp(P_N(x_1,\dots,x_\ell))$ mutually $C_{s,\ell}$-control each other.
\end{fact}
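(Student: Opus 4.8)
The plan is to reduce Fact~\ref{mut_contr} to the corresponding statements already established in \cite{BG09} and \cite{BGT12} for \emph{nilpotent progressions} (in the sense of \cite[Definition~1.4]{BG09}), using that nilboxes and nilpotent progressions are essentially the same object, and that nilprogressions and nilpotent progressions mutually control each other in the nilpotent setting. First I would recall that by \cite[Theorem~1.9 or Proposition~4.3]{BG09} a nilpotent progression of bounded rank and step is a $C_{s,\ell}$-approximate subgroup, and — since being an approximate subgroup is preserved (with a controlled change in constant) under mutual $K$-control — it suffices to prove the two claimed mutual-control statements: that the nilbox $\exp(P_N(x_1,\dots,x_\ell))$ is $C_{s,\ell}$-controlled by the corresponding nilpotent progression on the $u_i$, and vice versa, and that the nilprogression $P(\{u_1,\dots,u_\ell\};N)$ is likewise mutually $C_{s,\ell}$-controlled by that nilpotent progression. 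The first of these is essentially the definition-level comparison: the nilbox is the image under $\exp$ of a box of integer combinations of basic commutators $b_{m,j}$ with $t_{m,j} \in [-N^m,N^m]$, and by Fact~\ref{expSurj}(ii) and the Baker--Campbell--Hausdorff formula this image differs from the ``ordered product'' form $\prod_{m,j} u_{m,j}^{t_{m,j}}$ (where $u_{m,j} = \exp(b_{m,j})$) only by lower-order BCH correction terms, which are absorbed into a bounded multiplicative error; this is exactly the content of \cite[Section~4]{BG09}. The comparison between the nilprogression $P(\{u_1,\dots,u_\ell\};N)$ and the nilpotent progression is \cite[Proposition~2.1 / Lemma~2.4 / the results of Section~6]{BG09} together with \cite[Appendix~B]{BGT12}, where it is shown that in a nilpotent group a nilprogression of length $N$ on $\ell$ generators is mutually $O_{s,\ell}(1)$-controlled by a nilpotent progression on the same generators of comparable parameters.

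Concretely I would carry out the proof in the following order. Step 1: fix the free nilpotent $\Q$-Lie algebra $\ff(X)$ of step $s$ on $X=(x_1,\dots,x_\ell)$, the list of basic commutators, and their images $b_{m,j}\in\gg$ and $u_{m,j}:=\exp(b_{m,j})\in G$; record that $u_{1,j}=u_j$ and that the $b_{m,j}$ with $m\geq 2$ are iterated Lie brackets of the $x_i$, hence by Fact~\ref{expSurj}(ii) the $u_{m,j}$ with $m\geq 2$ are iterated group commutators of the $u_i$, lying in $G_{m-1}$. Step 2: invoke the definition of the nilpotent progression $Q$ attached to $(u_1,\dots,u_\ell;N)$ from \cite[Definition~1.4]{BG09} — it is the set of ordered products $\prod u_{m,j}^{t_{m,j}}$ with $t_{m,j}\in[-N^m,N^m]\cap\Z$ — and quote \cite[Theorem~2.4 or the main result of Section~4]{BG09} (see also \cite{BGT12}) that $Q$ is a $C_{s,\ell}$-approximate group. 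Step 3: show $\exp(P_N(X))$ and $Q$ mutually $C_{s,\ell}$-control each other by expanding each element of one as an element of the other up to a bounded product of ``correction'' generators, using BCH in $\gg$ (the only point being that all error terms have strictly higher order and hence strictly smaller exponent range up to a bounded constant, which is the standard nilbox/nilpotent-progression bookkeeping). Step 4: show $P(\{u_1,\dots,u_\ell\};N)$ and $Q$ mutually $C_{s,\ell}$-control each other, which is the group-side statement proved in \cite[Section~6]{BG09} and \cite[Appendix~B]{BGT12} (every word of bounded weighted length collapses, via commutator identities, into ordered-product form, and conversely). Step 5: combine — mutual control is transitive up to multiplying constants, and the approximate-group property transfers across mutual control (with controlled constant by \cite[Lemma~2.2]{Tao-BSG} or the analogous lemma of \cite{BG09}) — to conclude both $P(\{u_1,\dots,u_\ell\};N)$ and $\exp(P_N(x_1,\dots,x_\ell))$ are $C_{s,\ell}$-approximate subgroups mutually $C_{s,\ell}$-controlling each other. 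Since all constants produced depend only on $s$ and $\ell$ (the number and length of the BCH terms, the structure constants of $\ff(X)$, and the number of basic commutators all being functions of $s$ and $\ell$ alone), the uniform constant $C_{s,\ell}$ exists.

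The main obstacle, such as it is, is purely bookkeeping rather than conceptual: making the BCH expansions match up cleanly between the ``unordered'' nilbox (a set of $\Z$-linear combinations of basic commutators in $\gg$, then exponentiated) and the ``ordered product'' nilpotent progression in $G$, keeping track that every discrepancy is a product of a bounded number of $u_{m,j}^{\pm 1}$ with exponents still in $[-C_{s,\ell}N^m,C_{s,\ell}N^m]$. This is genuinely carried out in \cite[Sections~2 and~4]{BG09} and surveyed in \cite[Appendix~B]{BGT12}; since I am entitled to cite those, I would keep the write-up short, stating the needed lemmas from \cite{BG09},\cite{BGT12} precisely and then assembling them. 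One minor subtlety worth flagging is the choice of complex-coefficient vs.\ $\Q$-coefficient Lie algebra: the nilbox is defined over $\Q$ while $\gg$ is a complex Lie algebra, but since $\exp$ is a polynomial (in nilpotent coordinates) and the basic commutators span a $\Q$-structure, this causes no difficulty — the relevant estimates only ever involve the integer exponents $t_{m,j}$ and the fixed (bounded number of) structure constants.
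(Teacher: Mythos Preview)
Your approach is essentially the paper's: both route the comparison through the \emph{nilpotent progression} on the $u_i$ (in the sense of \cite[Definition~1.4]{BG09}), citing \cite{BG09} for the nilbox $\leftrightarrow$ nilpotent-progression control and a separate source for the nilprogression $\leftrightarrow$ nilpotent-progression control, then combining by transitivity. The paper's specific citations differ slightly from yours (it uses \cite[Propositions~7.1--7.2]{BG09} and a lemma of Tao for the approximate-group claims, and \cite[Proposition~2.4]{BG09} together with Tointon's \cite[Proposition~C.1]{To14} for the two mutual-control steps), but the architecture is identical.

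There is one point you gloss over that the paper handles explicitly: the nilbox/nilpotent-progression comparison in \cite{BG09} is stated for \emph{simply connected} nilpotent Lie groups (where $\exp$ is a diffeomorphism and the BCH bookkeeping is clean). For general connected $G$ one first proves the statement in a simply connected cover $H$ with Lie algebra $\gg$, then pushes everything forward along a homomorphism $\Phi:H\to G$ satisfying $\exp_G=\Phi\circ\exp_H$; mutual $K$-control is preserved under such a surjection because $A\subseteq XB$ implies $\Phi(A)\subseteq\Phi(X)\Phi(B)$. Your phrase ``using BCH in $\gg$'' is implicitly the simply-connected computation, but you should make the descent to $G$ explicit. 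A smaller quibble: Fact~\ref{expSurj}(ii) only equates $\exp$ of a \emph{length-$n$} iterated Lie bracket with the corresponding group commutator, so it does not directly tell you that every $u_{m,j}=\exp(b_{m,j})$ is a group commutator of the $u_i$ for $m<n$; that identification again comes out of the full BCH comparison rather than from Fact~\ref{expSurj}(ii) alone.
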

\begin{proof}
The nilbox $\exp(P_N(x_1,\dots,x_\ell))$ being a $C_{s,\ell}$-approximate subgroup follows directly from \cite[Propositions 7.1 and 7.2]{BG09}. Meanwhile, the nilprogression $P(\{u_1,\dots,u_\ell\};N)$ is a $C_{s,\ell}$-approximate subgroup by \cite[Lemma 1.16]{Tao10}.

By \cite[Proposition 2.4]{BG09}, $\exp(P_N(x_1,\dots,x_\ell))$ and the nilpotent progression of length $N$ generated by $\{u_1,\ldots,u_\ell\}$ mutually $C_{s,\ell}$-control each other when $G$ is simply connected of nilpotency class $s$. By \cite[Proposition C.1]{To14}, nilpotent progressions and nilprogressions on the same set of generators of the same length mutually control each other with some constant depending only on the number of generators and the nilpotency class. Hence we get the desired result in the case that $G$ is simply connected.
For the general case, let $H$ be a simply connected Lie group with Lie algebra $\gg$. By \cite[Page 49, first paragraph]{Kn} applied to $\phi=id:\gg\to \gg$, there exists a smooth homomorphism $\Phi:H\to G$ such that $d\Phi=id$. Hence, by \cite[Page 49, equation (1.84)]{Kn}, we have that $\exp_G=\Phi \circ \exp_H$. By the simply connected case, $P(\{\exp_H(x_1),\dots,\exp_H(u_\ell)\};N)$ and $\exp_H(P_N(x_1,\dots,x_\ell))$ mutually $C_{s,\ell}$-control each other for some constant $C_{s,\ell}$. Hence, their images under $\Phi$, i.e.  $P(\{u_1,\dots,u_\ell\};N)$ and $\exp_G(P_N(x_1,\dots,x_\ell))$, mutually $C_{s,\ell}$-control each other as well. \end{proof}


\begin{lemma}\label{wgp_control}
	Suppose $G$ is a connected algebraic group.
	Let $K \in \R$.
        If $A=\prod_{i \to \U} A_i$ and  $B=\prod_{i \to \U}B_i$ are broad internal subsets of $G$ such that $A_i$ is $K$-controlled by $B_i$ for $\U$-many $i$, and $B$ is wgp [with gap $\eta$], then $A$ is wgp [with gap $\eta$].
\end{lemma}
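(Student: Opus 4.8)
The plan is to pass to ultraproducts of the control data and reduce the wgp condition for $A$ to that for $B$. First I would fix $I \in \U$ on which $A_i$ is $K$-controlled by $B_i$; for $i \in I$ choose a witnessing $X_i \subseteq G$ with $|X_i| \leq K$ and $A_i \subseteq (X_iB_i)\cap(B_iX_i)$, and set $X_i := \{1\}$ for $i \notin I$. Let $X := \prod_{i \to \U} X_i$, an internal subset of $G$ with $A \subseteq (XB) \cap (BX)$ and $|X| \leq K$. Since $K$ is standard, $|X|$ is an honest natural number, so $X = \{x_1,\dots,x_m\}$ with $m \leq K$; this genuine finiteness is what makes the union bounds below legitimate.

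Next I would check that $\bdl(A) = \bdl(B)$ (in particular re-confirming $A$ is broad): from $A \subseteq XB$ we get $|A| \leq |X|\,|B| \leq K|B|$, and the control hypothesis gives $|B| \leq K|A|$; since $\bdl(K) = 0$, both inequalities pass to $\bdl$.

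The main step is to bound $\bdl(A \cap W)$ for an arbitrary proper subvariety $W \subsetneq G$ over $\K$. Using $A \subseteq \bigcup_{j \leq m} x_j B$ we have $A \cap W \subseteq \bigcup_{j \leq m} x_j\bigl(B \cap x_j^{-1} W\bigr)$. Each $x_j^{-1} W$ is again a proper subvariety of $G$, so wgp of $B$ yields $\bdl(B \cap x_j^{-1} W) < \bdl(B)$, and in the gap version $\bdl(B \cap x_j^{-1} W) \leq \bdl(B) - \eta$. Since $\bdl$ is invariant under the internal bijection given by translation by $x_j$, and since $\bdl$ of a union of standardly many internal sets is the maximum of their $\bdl$'s, this gives $\bdl(A \cap W) < \bdl(B) = \bdl(A)$, resp.\ $\leq \bdl(A) - \eta$ — which is exactly wgp of $A$ (with gap $\eta$).

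I do not expect a real obstacle here: only the one-sided containment $A \subseteq XB$ is used for the subvariety bound, while the two-sided control and the bound $|B_i| \leq K|A_i|$ serve only to identify $\bdl(A)$ with $\bdl(B)$, so that the gap extracted from $B$ is measured against the right quantity. The one point requiring a moment's care is that $X$ is genuinely finite (because $K$ is standard), which is what licenses replacing $A \cap W$ by a standardly-indexed union and computing $\bdl$ termwise.
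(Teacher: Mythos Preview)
Your argument is correct and follows essentially the same route as the paper's proof: form the control set $X$ (the paper just says ``let $X$ witness the control''), deduce $\bdl(A)=\bdl(B)$, and then bound $\bdl(A\cap W)\leq \max_{x\in X}\bdl(x^{-1}W\cap B)$ using wgp of $B$. Your version is a bit more explicit in constructing $X$ as an ultraproduct and in noting that $|X|\leq K$ makes $X$ genuinely finite, but the substance is identical.
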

\begin{proof}
  As $B$ $K$-controls $A$, it clearly follows that $\bdl(A)=\bdl(B)$. Now suppose $W\subsetneq G$ is Zariski-closed. Let $X$ witness the control. Then
  $$\bdl(W\cap A)\leq  \bdl (W\cap (X\cdot B))=\max_{x\in X}\bdl(W\cap x\cdot B) = \max_{x\in X}\bdl(x^{-1}W\cap B) \leq \bdl(B)-\eta < \bdl(B)$$ as each $x^{-1}W$ is a proper Zariski-closed subset of $G$.
\end{proof}


\subsection{The proof of weak general position of generic nilprogressions}

In this final subsection we are going to prove the main result  of  Section \ref{s:nilproWGP}, Theorem \ref{wgp}.

 Let $G$ be a complex algebraic group with Lie algebra $\gg$. Fixing an arbitrary choice of basis, we identify $\gg$ with $\C^{\dim\gg}$. The complex Lie algebra structure on $\gg$ is then definable in the field of complex numbers $(\mathbb{C},+,\cdot,0,1) \vDash \ACF_0$ over some finitely many parameters (the structure constants of the Lie algebra). Throughout the whole subsection, we let  $C_0 \subseteq \C$ be a countable subfield such that $\gg$ and $G$ are both definable in $(\mathbb{C},+,\cdot,0,1)$ over $C_0$.

 \begin{remark}\label{remark-generics}
 Let $C$ be a $\mathbb{Q}$-subalgebra of $\gg$ generated by independent generics in the sense of $\ACF_0$ over the countable\footnote{Here we view $\exp^{-1}_{G/G_k}(e_G/G_k)$ as a countable set of imaginaries; as $\ACF_0$ eliminates imaginaries, we can identify it with a countable subset of $\C$.} set $C_0\cup\bigcup_{k\leq n}\exp^{-1}_{G/G_k}(e_{G/G_k})$. Then $ (\exp^{-1}_{G/G_k}(e_{G/G_k}))\cap C/\gg_k=\{0\}$ for all $k\leq n$. Indeed, otherwise we obtain a $\Q$-algebra term with constant non-zero value modulo $\gg_k$, which yields a contradiction, as the value of such a term at 0 is 0. In particular, $\exp_G$ is injective on $C$ by Lemma \ref{exp_inj}.

 \end{remark}

\begin{theorem}\label{wgp}
Let $G$ be a connected complex nilpotent algebraic group of nilpotency class $n$. There are a natural number $\ell$ and $\eta>0$ such that any broad nilprogression generated by $\ell$ independent generics in $G$ is wgp with gap $\eta$.
\end{theorem}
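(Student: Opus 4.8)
The plan is to reduce the statement about nilprogressions to a statement about nilboxes via Fact~\ref{mut_contr} and Lemma~\ref{wgp_control}, and then to prove weak general position of the nilbox $\exp(P_N(X))$ for $X$ a tuple of $\ell$ independent generics in $\gg$ by induction on the nilpotency class $n$, using the commutative case (Theorem~\ref{t:arithProgWGP} and its corollary) as the base case and the quotient $G \to G/G'$ together with the inclusion $G' \hookrightarrow G$ as the inductive step. Throughout I work in the pseudofinite setting: fix a non-principal ultrafilter, take $N = \lim_{i \to \U} N_i$ nonstandard, scale so that $\bdl(N) = 1$, and note that since the nilbox has polynomially many elements in $N$ it is automatically broad with $\bdl(\exp(P_N(X)))$ a positive rational. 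First I would fix $\ell$ large — large enough that the images of the basic commutators of each order $m$ under $\exp$ contain, up to the commutator width $w$ of $G$ and the bound $\dim(G_m/G_{m+1})$, enough genuinely independent generics of the layers $G_m/G_{m+1}$ for the commutative results to apply on each graded piece; concretely something like $\ell$ depending on $n$, $w$, and $\dim G$ suffices. By Remark~\ref{remark-generics}, $\exp$ is injective on the $\Q$-subalgebra $C$ generated by such independent generics, so the nilbox maps bijectively (up to bounded constants) and it is harmless to compute in $\gg$ with the additive ``box'' $P_N(X)$ and transfer back.

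For the inductive step, write $G' = G_1 = [G,G]$, let $\pi : G \to \bar G := G/G'$ be the quotient, and let $\bar{\gg} = \gg/\gg_1$. By Fact~\ref{functoriality}, $\exp_{\bar G} \circ \pi_{\gg/\gg_1} = \pi \circ \exp_G$, and the image of $P_N(X)$ under $\pi_{\gg/\gg_1}$ is (up to bounded constants) the arithmetic-progression box generated by the images $\bar x_0, \dots, \bar x_{\ell-1}$, which are independent generics of $\bar{\gg}$. So the quotient $\pi(\exp P_N(X))$ is controlled by a generic arithmetic progression in the commutative group $\bar G$, hence wgp with a gap in $\bar G$ by the corollary to Theorem~\ref{t:arithProgWGP} (here I use $\ell \geq \dim V_0(\bar G)$). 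Meanwhile the fibre of $\pi$ over $0$ inside the nilbox is, by the Baker--Campbell--Hausdorff computations in Fact~\ref{expSurj}, essentially the sub-box of $\gg_1$ spanned by the basic commutators of order $\geq 2$; these are iterated brackets $[x_{i_k},\dots,x_{i_1}]$ (Remark~\ref{e_i_basic}), and although a single such bracket need not be generic in the relevant layer, products of $w$ of them are, so after passing to a bounded power (harmless for $\bdl$ and for wgp, by the approximate-subgroup property from Fact~\ref{mut_contr}) the fibre box is controlled by a generic nilbox in the nilpotent group of class $n-1$ supported on $\gg_1$, which is wgp with a gap by the inductive hypothesis. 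The point then is that each fibre of $\pi$ restricted to the nilbox is a translate of essentially this same box (this uses that the higher-order basic-commutator coordinates are central modulo the next layer, so translation in $\gg$ rearranges the box within a bounded-index sublattice — as in the passage from nilprogressions to nilboxes, one replaces the nilprogression by a nilbox exactly to make this literally true).

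To combine the two pieces into wgp with a gap for $P := \exp(P_N(X))$ in $G$, suppose $W \subsetneq G$ is a proper subvariety; I want $\bdl(P \cap W) \leq \bdl(P) - \eta$ for a uniform $\eta > 0$. Stratify $P \cap W$ by the fibres of $\pi$: either $\pi(W)$ is a proper subvariety of $\bar G$, in which case $\bdl(\pi(P) \cap \pi(W)) \leq \bdl(\pi(P)) - \eta_0$ by wgp-with-gap downstairs and summing over fibres (each of $\bdl$ equal to $\bdl$ of the fibre box) gives the gap; or $\pi(W)$ is Zariski-dense in $\bar G$, in which case generic fibres $W \cap \pi^{-1}(\bar a)$ are proper subvarieties of the coset $\pi^{-1}(\bar a) \cong G'$, so $\bdl((P \cap \pi^{-1}(\bar a)) \cap W) \leq \bdl(\text{fibre box}) - \eta_1$ by wgp-with-gap of the fibre box (inductive hypothesis), and again summing over the $\bdl(\pi(P))$-many fibres produces the gap. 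Taking $\eta := \min(\eta_0,\eta_1)$ scaled appropriately by the ratio of $\bdl$'s gives the result; finally transfer from the nilbox back to the nilprogression $P(\{\exp x_i\};N)$ by Fact~\ref{mut_contr} and Lemma~\ref{wgp_control}, which preserves wgp-with-gap. The main obstacle I expect is the bookkeeping in the inductive step: making precise the claim that the fibres of $\pi$ on the nilbox are uniformly translates of a single generic lower-class nilbox, controlling the constants and the passage to the $w$-th power so that genericity of the commutator generators is genuinely available, and checking that the two uniform gaps $\eta_0,\eta_1$ can be chosen depending only on $G$ (hence on $n$, $w$, $\dim G$) and not on $W$ or $N$ — this is where the nilbox formalism of \cite{BG09}, rather than nilprogressions, does the real work.
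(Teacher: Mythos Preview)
Your overall architecture matches the paper's: reduce nilprogressions to nilboxes via Fact~\ref{mut_contr} and Lemma~\ref{wgp_control}, induct on the nilpotency class, fibre over a quotient, and run a two-case argument according to whether $\pi(W)$ is proper. The essential divergence is \emph{which} quotient you take. You quotient by $G' = G_1 = [G,G]$, so the quotient is abelian (handled by Theorem~\ref{t:arithProgWGP}) and the fibre sits in $G_1$, a group of class $n-1$ to which you hope to apply the inductive hypothesis. The paper does the reverse: it quotients by the \emph{last} nontrivial term $G_{n-1}$, so the quotient $G/G_{n-1}$ has class $n-1$ (inductive hypothesis applies) and the fibre lives in the \emph{abelian} group $G_{n-1}$ (base case applies). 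For $n=2$ these coincide, which is why the introduction's sketch agrees with your picture; for $n\geq 3$ they do not.

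This is not merely cosmetic. The image of a nilbox under a Lie-algebra surjection is again a nilbox on the images of the generators, so the inductive hypothesis applies directly to $\pi(P_N(X))=P_N^{\gg/\gg_{n-1}}(\pi(X))$. The fibre of a nilbox, however, is \emph{not} a nilbox in the subgroup, and your claim that ``the fibre box is controlled by a generic nilbox in $G_1$'' is where the argument breaks. The fibre is $\sum_{m\geq 2} B_N^m$, with side length $N^m$ in layer $m$; there is in general no length $M$ for a nilbox in $G_1$ matching this grading. Already for the free class-$3$ group on two generators, $G_1$ is abelian of dimension $3$, the fibre box has $\bdl=8\,\bdl(N)$, yet any progression in $G_1$ on three independent generics large enough to cover the degree-$3$ part (coefficients up to $N^3$) has $\bdl\geq 9\,\bdl(N)$, so control fails. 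The paper avoids this entirely: because $G_{n-1}$ is abelian, it can write the fibre as a direct sum $E'_{N^*}\oplus(\text{rest})$ where $E'_{N^*}$ is a genuine generic progression in $G_{n-1}$ (this is where the commutator-width argument enters, producing $s$ independent generics among the top-degree basic commutators), and then an elementary additive lemma (Claim~\ref{diretsum}) transfers wgp across the direct sum. That lemma uses crucially that $\exp$ is a homomorphism on $\gg_{n-1}$; there is no evident analogue inside your non-abelian $G_1$. Switching your quotient from $G_1$ to $G_{n-1}$ should make the rest of your outline go through.
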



We will reduce Theorem \ref{wgp} to the following (similar, but slightly more technical) statement about nilboxes:
\begin{theorem}\label{wgpLie}
Let $G=G(\C)$ be a connected nilpotent algebraic group over $\C$ of nilpotency class $n$. Let $\gg$ be its Lie algebra.  Then, there are a natural number $\ell=\ell(G)$ and $\eta=\eta(G)>0$ such that the following holds.

 Let $X=(x_0,\dots,x_{\ell-1})\subseteq \gg$ be such that $\exp(X)$ is a tuple of $\ell$ independent generics in $G(\C)$ over $C_0$ and $X$ is a tuple of $\ell$ independent generics in $\gg$ over $\Delta:=C_0\cup\bigcup_{k\leq n}\text{exp}^{-1}_{G/G_k}(e_G/G_k)$. Let $P:=\prod_{i\to\U}P_{N_i}(X)$ and $N^*:=\lim_{i\to\U} N_i\in \N^{\U}$ with $\bdl(N^*)>0$. Suppose $W\subseteq G(\C^{\U})$ is a proper subvariety defined over $\C^{\U}$. Then $\bdl(P\cap \exp^{-1}(W))\leq \bdl(P)-\eta$.
\end{theorem}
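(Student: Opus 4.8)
\textbf{Plan for the proof of Theorem~\ref{wgpLie}.}

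The plan is to induct on the nilpotency class $n$. Set $m := n-1$, write $G' := G_m$ for the last nontrivial term of the descending central series (so $G'$ is central in $G$), and let $\gg' := \gg_m$ be its Lie algebra. The commutative case ($n=1$, or $n=0$ trivially) is handled by the Corollary to Theorem~\ref{t:arithProgWGP}: there a generic arithmetic progression in independent generics is Larsen--Pink, hence wgp with a gap, in a connected commutative algebraic group. For the inductive step I would first reduce to the case that $W$ is irreducible, infinite, and proper. Let $\pi : G \to \bar G := G/G'$ be the quotient; then $\bar G$ is connected nilpotent of class $m$, its Lie algebra is $\gg/\gg'$ by Fact~\ref{functoriality}, and $\exp_{\bar G}(\pi_{\gg/\gg'}(X))$ is again a tuple of independent generics in $\bar G$ (the images of independent generics under a dominant map remain independent generics). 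Crucially, the nilbox behaves functorially: by the way $*$ and the basic commutators are set up (Fact~\ref{expSurj}, Fact~\ref{functoriality}, Remark~\ref{e_i_basic}), $\pi(\exp(P_N(X)))$ is controlled by $\exp_{\bar G}(P_N(\pi_{\gg/\gg'}(X)))$ up to a constant depending only on $G$, and the fibres of $P$ over a point of $\bar P := \pi(P)$ are, up to bounded control, translates of a sub-nilbox inside the central subgroup $G'$ --- namely the arithmetic progression in the central commutators. This is the ``nilbox'' substitute for the sketched argument with nilprogressions, and the reason we work with nilboxes rather than nilprogressions.

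The main case analysis is then according to the position of $W$ relative to $G'$. If $\pi(W)$ is a proper subvariety of $\bar G$: then $\exp^{-1}(W) \cap P$ projects into $\exp_{\bar G}^{-1}(\pi(W)) \cap \bar P$, and every fibre of $P \to \bar P$ has $\bdl$ equal to $\bdl(P) - \bdl(\bar P)$ (the fibres all have the same nonstandard size, up to bounded factors, by the structure of the nilbox). By the inductive hypothesis applied in $\bar G$ with its tuple of independent generics, $\bdl(\exp_{\bar G}^{-1}(\pi(W)) \cap \bar P) \leq \bdl(\bar P) - \bar\eta$ for the constant $\bar\eta = \eta(\bar G)$, whence $\bdl(\exp^{-1}(W) \cap P) \leq \bdl(P) - \bar\eta$. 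If instead $\pi(W)$ is Zariski dense in $\bar G$, then since $W$ is proper and irreducible, for generic $\bar y \in \bar G$ the fibre $W_{\bar y} := W \cap \pi^{-1}(\bar y)$ is a proper subvariety of the coset $\pi^{-1}(\bar y)$, i.e.\ (after translating into $G'$) a proper subvariety of $G'$. Here is where the commutative case re-enters: the fibre of $P$ over the corresponding point of $\bar P$ is, up to bounded control, a translate of a fixed nilbox $Q$ in the commutative group $G'$ generated by the central commutators $[x_{i_1},\dots,x_{i_n}]$ of the generators. As in the class-$2$ sketch in the introduction, these central commutators need not be generic in $G'$ individually, but --- taking $\ell$ large enough, a bound depending only on $G$ via the commutator width --- suitable products of boundedly many of them form a tuple of independent generics in $G'$, so (by a further linear change of coordinates and Lemma~\ref{wgp_control}) $Q$ is controlled by a generic arithmetic progression in $G'$ and hence is wgp with a gap $\eta'$ in $G'$, by the Corollary in Section~\ref{s:vectorialML}. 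Thus each relevant fibre of $\exp^{-1}(W) \cap P$ has $\bdl$ at most $(\bdl(P) - \bdl(\bar P)) - \eta'$, and summing over the at most $|\bar P|$-many fibres (i.e.\ adding $\bdl(\bar P)$) gives $\bdl(\exp^{-1}(W) \cap P) \leq \bdl(P) - \eta'$. Taking $\eta := \min(\bar\eta, \eta')$ and the appropriate $\ell$ completes the induction.

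The main obstacle I anticipate is making the ``fibres are translates of a fixed central nilbox, up to bounded control'' claim precise and uniform. Concretely: one must show that, after replacing $P_N(X)$ by an equivalent nilbox (the ``further transformations'' alluded to in the introduction --- rescaling coordinates, absorbing bounded-index subgroups), the map $P \to \bar P$ has all fibres literally equal to translates of one sub-nilbox $Q \subseteq G'$ whose generating set is the central commutators, and that $Q$ in turn is controlled by a generic arithmetic progression in $G'$. This requires the Baker--Campbell--Hausdorff bookkeeping of Fact~\ref{expSurj}, the basic-commutator normal form, the genericity/injectivity input of Lemma~\ref{exp_inj} and Remark~\ref{remark-generics} (to guarantee $\exp$ is injective on the relevant $\Q$-subalgebra, so that counting in $\gg$ transfers to counting in $G$), and a commutator-width argument to upgrade the central commutators to independent generics of $G'$. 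I would isolate this as a separate combinatorial lemma about nilboxes and their central slices before assembling the induction, and then the reduction of Theorem~\ref{wgp} to Theorem~\ref{wgpLie} is routine via Fact~\ref{mut_contr} and Lemma~\ref{wgp_control}.
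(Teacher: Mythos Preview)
Your plan matches the paper's proof closely: the same induction on $n$ via the extension $G_{n-1} \to G \to G/G_{n-1}$, the same two-case split according to whether $W/G_{n-1}$ is proper in $G/G_{n-1}$, the same commutator-width trick (the paper takes $\ell = \max\{sdn,\ell'\}$ with $G_{n-1} = Y^d$ by Zilber indecomposability) to find independent generics of $G_{n-1}$ among sums of top-degree basic commutators, and you correctly identify the main technical obstacle. The paper resolves that obstacle not by literally making the fibres of $P \to \bar P$ translates of a single nilbox, but by a change of $\Q$-basis in each graded piece $B_m$ so that $P \subseteq C_{N^*} \oplus D_{N^*} \subseteq P_{t'N^*}(X/t)$ with $C_{N^*} \subseteq \gg_{n-1}$ and $\pi$ injective on $D_{N^*}$; the three sets have the same $\bdl$, and $\exp(C_{N^*})$ is shown wgp in $G_{n-1}$ via the independent generics just produced.

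There is one genuine small gap in your Case~2. You pass from ``for generic $\bar y$ the fibre $W_{\bar y}$ is proper'' to ``each relevant fibre has $\bdl$ at most $\ldots$'', but over the locus $V' := \{w \in W : \dim(wG_{n-1} \cap W) = \dim G_{n-1}\}$ the fibre is all of a coset of $G_{n-1}$ and your fibrewise bound does not apply. Reducing to irreducible $W$ does not eliminate this locus. The paper handles it by noting that $\dim(V'/G_{n-1}) < \dim(G/G_{n-1})$, so the Zariski closure of $V'$ falls under Case~1; one then runs the fibrewise argument only on $V := W \setminus V'$, where every fibre $vG_{n-1} \cap W$ is a genuinely proper subvariety of the coset. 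With this extra step your outline becomes exactly the paper's proof.
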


\begin{proof}[Proof of Theorem \ref{wgp} from Theorem \ref{wgpLie}]
Let $\ell$ and $\eta$ be given by Theorem \ref{wgpLie} applied to $G$. We claim that the same $\ell$ and $\eta$ work for Theorem \ref{wgp}.
 First we note that there exists a set $X$ as in the statement of Theorem \ref{wgpLie}. Indeed, to make sure $\exp(X)$ is generic in $G$, we need $\exp(X)\cap W_i=\emptyset$ for every $i<\omega$, where $(W_i)_{i<\omega}$ is an enumeration of all proper subvarieties of $(G(\C))^\ell$ defined over $C_0$. As each $W_i$ is nowhere dense in $(G(\C))^\ell$ and $\exp$ is a local homeomorphism, $\exp^{-1}(W_i)$ must be nowhere dense in $\gg^{\ell}$. Hence $\bigcup_i\exp^{-1}(W_i)$ is meagre in $\gg^{\ell}$. As $\Delta$ is countable, the set of non-generics over $\Delta$ in $\gg^{\ell}$ is meagre. By the Baire category theorem the set of generics in $\gg^{\ell}$ over $\Delta$ not belonging to $\bigcup_i\exp^{-1}(W_i)$ is dense, so in particular non-empty.

By Remark \ref{remark-generics}, $\exp$ is injective restricted to $P_{N_i}(X)\subseteq G(\C)$, hence also injective on $P:=\prod_{i\to\U}P_{N_i}(X)\subseteq G(\C^{\U})$. Therefore for any proper subvariety $W\subseteq G(\C^{\U})$  defined over $\C^{\U}$, we have $$\bdl(W\cap \exp(P))=\bdl( \exp^{-1}(W)\cap P)\leq \bdl(P)-\eta=\bdl(\exp(P))-\eta.$$
Hence $\exp(P)$ is wgp with gap $\eta$. As the nilprogression $P(\exp(X);N^*)$ is $C_{n,\ell}$-controlled by $\exp(P)$ by Fact \ref{mut_contr} for some constant $C_{n,\ell}$, we conclude by Lemma~\ref{wgp_control} that $P(\exp(X);N^*)$ is wgp with gap $\eta$.
Now for any set $Y$ of $\ell$ independent generics in $G$, by uniqueness of the generic type in $G^l$ we get that there is an automorphism $\sigma$ of $G(\mathbb{C})$ mapping $\exp(X)$ to $Y$. Hence $\sigma$ also maps $P(\exp(X);N^*)$ to $P(Y;N^*)$ in $G(\C^{\mathcal{U}})$. As being wgp with gap $\eta$ is invariant under automorphisms, we conclude that $P(Y; N^*)$ is wgp with gap $\eta$, as required.
\end{proof}

We will prove Theorem \ref{wgpLie} by induction on the nilpotency class $n$. The abelian case $n=1$  has been treated in Theorem \ref{t:arithProgWGP} as $\exp$ is a homomorphism and it maps nilboxes to generalised arithmetic progressions bijectively when the generator set $X$ is independent from $\exp^{-1}(e_G)$. For $n>1$, consider the following exact sequence:

$$0 \longrightarrow\gg_{n-1} \longrightarrow \gg \longrightarrow \gg/ \gg_{n-1} \longrightarrow 0.$$

Note that $\gg_{n-1}$ is abelian and $\gg/\gg_{n-1}$ is nilpotent of step $n-1$. Let $s:=\ell(G_{n-1})$,$\eta_{1}=\eta(G_{n-1})$ and $\ell':=\ell(G/G_{n-1})$, $\eta_2=\eta(G/G_{n-1})$ be numbers given by the inductive hypothesis applied to $G_{n-1}$ and $G/G_{n-1}$ respectively.
(Remember $G/G_{n-1}=\exp(\gg/\gg_{n-1})$ and $G_{n-1}=\exp(\gg_{n-1})$ by Fact \ref{functoriality} and Fact \ref{expSurj}(i).)
By Zilber's indecomposability theorem there is $d<\omega$ such that $G_{n-1}=Y^d$ where $Y := \{[g_1,\dots,g_{n}]:g_1,\dots,g_n\in G\}$. 
 Put $\eta:=\min(\eta_1,\eta_2)$ and let $X=(x_0,\dots,x_{\ell-1})$ be a tuple of $\ell:=\max\{sdn,\ell'\}$ independent generics in $\gg$ over $\Delta$ such that $\exp(X)$ is a tuple of independent generics in $G$ over $C_0$. Let $P:=\prod_{i\to\U}P_{N_i}(X)$. We want to prove $\bdl(P\cap \exp^{-1}(W))\leq \bdl(P)-\eta$ for any proper subvariety $W\subseteq G(\C^{\U})$ defined over $\C^{\U}$.

We start with some definitions.
\begin{definition}
Let $Y\subseteq \gg$ be a finite set, define $\mathcal{A}_N(Y)$, the \emph{progression} of length $N$ generated by $Y$, as $\mathcal{A}_N(Y):=\{\sum_{y\in Y}c_yy: c_y\in [-N,N]\cap \Z\}$.
\end{definition}

Let $(b'_{i,j})_{i,j}$ and $(b_{i,j})_{i,j}$ be as in Definition \ref{def:nilbox} and let $B_m:=\langle b_{m,j}:j\rangle_{\Q}$ be the $\Q$-linear span of $\{b_{m,j}:j\}$ and $B_{m,N} : = \mathcal{A}_{N^m}(\{b_{m,j}:j\})$. Note that $P_N(X)=\sum_{m\leq n}B_{m,N}$ and $B_{n,N}\subseteq \gg_{n-1}$.

\begin{claim}\label{genericsGn}
$\exp(B_{n,N})$ contains $s$ independent generics in $G_{n-1}$ (over $C_0$) for all $N>0$.
\end{claim}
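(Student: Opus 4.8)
The plan is to extract the claimed independent generics of $G_{n-1}$ from commutators of the generators $x_i$ living in the top piece $B_N^n \subseteq \gg_{n-1}$. First I would observe that among the basic commutators $b_{n,j}$ of order exactly $n$, Remark~\ref{e_i_basic} gives us, for each strictly increasing tuple $i_1 < \dots < i_n < \ell$, a basic commutator equal up to sign to $[x_{i_n},\dots,x_{i_1}]$; under the natural homomorphism to $\gg$ this becomes $\pm[x_{i_n},\dots,x_{i_1}] \in B_n \cap \gg_{n-1}$, and by Fact~\ref{expSurj}(ii) its image under $\exp$ is $[\exp(x_{i_n}),\dots,\exp(x_{i_1})]^{\pm 1} \in G_{n-1}$. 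Since $\ell \geq sdn$, we can partition $\{0,1,\dots,sdn-1\}$ into $sd$ consecutive blocks of size $n$, obtaining $sd$ such iterated commutators $z_1,\dots,z_{sd} \in G_{n-1}$, each of the form $[\exp(x_{j}),\dots]$ on a disjoint block of the generators; moreover each $z_m$, being a basic commutator $b_{n,j}$ (up to sign) evaluated at $\exp(X)$, lies in $B_N^n$ for $N \geq 1$ (taking the coefficient $\pm 1 \in [-N^n,N^n]\cap\Z$ on the relevant $b_{n,j}$ and $0$ elsewhere), hence so do products of them.

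Next I would argue that these $z_m$ are ``generic enough''. The key point, exactly as in the nilpotency-class-$2$ sketch in the introduction, is that the individual commutators $z_m$ need not be generic in $G_{n-1}$, but $G_{n-1} = Y^d$ where $Y = \{[g_1,\dots,g_n] : g_i \in G\}$ by Zilber indecomposability, so products of $d$ of the $z_m$ (over disjoint blocks of generators) will hit generic points. Concretely, group the $sd$ commutators $z_1,\dots,z_{sd}$ into $s$ bunches of $d$, and set $w_t := z_{(t-1)d+1}\cdots z_{td}$ for $t = 1,\dots,s$; then $w_1,\dots,w_s \in \exp(B_N^n)$ for $N \geq 1$ since $\exp(B_N^n) \supseteq \{ \exp(\sum_j t_j b_{n,j}) : t_j \in [-N^n,N^n]\cap\Z\}$ is closed under the relevant products (here one may need to absorb a bounded factor, or simply note that finite products of elements of $\exp(B_N^n)$ still lie in $\exp(B_{N'}^n)$ for a slightly larger $N'$ with the same $\bdl$, or work with the internal $P = \prod_{i\to\U}P_{N_i}(X)$ directly). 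I would then show $(w_1,\dots,w_s)$ is a tuple of independent generics in $G_{n-1}$ over $C_0$: each $w_t$ is a word of the form $g \mapsto [g_{1},\dots,g_n]\cdot[g_{n+1},\dots,g_{2n}]\cdots$ applied to a subtuple of $\exp(X)$ disjoint from the subtuples used for the other $w_{t'}$, so by independence of the blocks it suffices to see that the map $(G(\C))^{dn} \to G_{n-1}$, $(g_1,\dots,g_{dn}) \mapsto [g_1,\dots,g_n]\cdots[g_{(d-1)n+1},\dots,g_{dn}]$ is dominant — but its image is Zariski-dense because it contains $Y^d = G_{n-1}$ once we range over all $g_i$, and dominance of a morphism defined over $C_0$ sends a generic of the source to a generic of the target.

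The main obstacle I expect is precisely this last domination/genericity step: one must be careful that the commutator map, as a constructible map of algebraic varieties over $C_0$, actually has Zariski-dense image equal to (a dense subset of) $G_{n-1}$, and that the $\exp(x_i)$ used in different blocks are genuinely mutually independent generics of $G(\C)$ — the latter is immediate from the hypothesis that $\exp(X)$ is a tuple of $\ell$ \emph{independent} generics, and the former follows from $G_{n-1} = Y^d$ together with the fact that the $d$-fold product map factors as a composite of the commutator map $G^n \to G_{n-1}$ (whose image is the constructible dense set $Y$) with multiplication; Zilber indecomposability is exactly what guarantees $d$ steps suffice. A secondary technical nuisance is bookkeeping the length parameter: the $w_t$ are products of boundedly many elements of $\exp(B_N^n)$, so lie in $\exp(B_{N}^n)$ after harmlessly enlarging $N$ by a bounded factor (which does not change $\bdl$), or one simply notes that the statement only claims containment in $\exp(B_N^n)$ and checks the coefficients directly using that $B_N^n = \mathcal{A}_{N^n}(\{b_{n,j}:j\})$ is itself closed under the bounded sums involved. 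None of this is deep, so the claim should follow quickly once the commutator-extraction and the $Y^d$ argument are set up.
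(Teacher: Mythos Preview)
Your proposal is correct and follows essentially the same route as the paper: partition the first $sdn$ generators into $sd$ disjoint blocks of size $n$, take the corresponding iterated commutators (which are basic commutators up to sign by Remark~\ref{e_i_basic}, hence land in $B^n_N$), and then use $G_{n-1}=Y^d$ to see that products over $d$-bunches give $s$ independent generics of $G_{n-1}$. The only streamlining in the paper is that it defines the elements directly in the Lie algebra as $e_i := \sum_{id\leq k<(i+1)d} b_{n,j_k} \in B^n_N$ (each coefficient is $0$ or $1$, so no enlargement of $N$ is needed) and then applies $\exp$, which is a homomorphism on the abelian $\gg_{n-1}$---this eliminates your ``secondary technical nuisance'' entirely.
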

\begin{proof}
  By Remark \ref{e_i_basic}, for each $k<sd$ there is $j_k$ such that\\  $(-1)^nb_{n,j_k}=[x_{kn+n-1},x_{kn+n-2},\dots,x_{kn}]$, so, by Fact \ref{expSurj}(ii),  $$\exp((-1)^nb_{n,j_k})=[\exp(x_{kn+n-1}),\exp(x_{kn+n-2}),\dots,\exp(x_{kn})].$$ Now $\exp(x_0),\dots,\exp(x_{sdn-1})$ are independent generics in $G$ by assumption, and so it follows that $\exp(b_{n,j_0}),\dots,\exp (b_{n,j_{sd-1}})$ are independent generics in $Y:=\{[g_1,\dots,g_n]:g_1,\dots,g_n\in G\}$.
  So, putting $e_i:=\sum_{id\leq k< (i+1)d}b_{n,j_k}$  and  $t_i:=\exp(e_i)$ for $i<s$, we get that $t_i=\prod_{id\leq k< (i+1)d}b_{n,j_k}$ are independent generics in $Y^d=G_{n-1}$.
  As $e_i\in B_{n,N}$, $\exp(e_i)\in \exp(B_{n,N})$, as required.
\end{proof}

In the following, we will cover $P$ by a sum of progressions of the same coarse dimension by choosing a suitable basis in $B_{m}$ for each $m$. We first develop some general facts about change of basis for progressions.

Let $t\in \N^{>0}$, and $S\subseteq \gg$. Define $S/t:=\{s/t:s\in S\}$ (as $\text{char}(\C)=0$, $s/t$ always exists.) Note that $s\mapsto s/t$ is a bijection from $S$ to $S/t$.
\begin{claim}\label{basechange}
 Let $C,D \subseteq \gg$ be finite subsets such that $\langle D\rangle_{\Q}\subseteq\langle C\rangle_{\Q}$. Then there is $t\in\N^{>0} $ such that for all $N \in \N$, $\mathcal{A}_N(D)\subseteq \mathcal{A}_{t^2|C||D|N}(C)/t$.
\end{claim}
\begin{proof}
Suppose $C=\{c_0,\ldots,c_k\}$ and $x=\sum_{b\in D}t_bb\in \mathcal{A}_N(D)$. For each $b\in D$, write $b=\sum_{i\in I_b}\frac{p_{b,i}}{q_{b,i}}c_i$ where $p_{b,i},q_{b,i}\in\Z\setminus\{0\}$ and $(p_{b,i},q_{b,i})=1$ and $I_b\subseteq \{0,\ldots,k\}$. Define $t:=\prod_{b\in D,i\in I_b}|p_{b,i}||q_{b,i}|$. 
 Then \begin{align*}
x&=\sum_{b\in D}t_bb=\sum_{b\in D}t_b\left(\sum_{i\in I_b}\frac{p_{b,i}}{q_{b,i}}c_i\right)\\
&=\sum_{i\leq k}\left(\sum_{b\in D,i\in I_b}\frac{t_bp_{b,i}}{q_{b,i}}\right)c_i.
\end{align*}
Note that $$\sum_{b\in D,i\in I_b}\frac{t_bp_{b,i}}{q_{b,i}}=\frac{1}{t}\sum_{b\in D,i\in I_b}\frac{t}{q_{b,i}}(t_bp_{b,i}),$$ with $\sum_{b\in D,i\in I_b}\frac{t}{q_{b,i}}(t_bp_{b,i})\in\Z$. Moreover, $$\left|\sum_{b\in D,i\in I_b}\frac{t}{q_{b,i}}(t_bp_{b,i})\right|\leq |C||D|N\cdot\max\{|\frac{t}{q_{b,i}}p_{b,i}|:b,i\}\leq |C||D|Nt^2.$$ Therefore, $x\in\mathcal{A}_{t^2|C||D|N}(C)/t$.
\end{proof}


\begin{claim}\label{diretsum}
Suppose $A$, $B$, and $C$ are subsets of $\gg_{n-1}(\C^{\U})$ such that $A$ and $C$ are internal, $B=A\oplus C$ (i.e. for every $b\in B$ there are unique $a\in A$ and $c\in C$ with $a+b=c$), $0<\bdl(A),\bdl(C)<\infty$ and $\exp$ injective on $B$. Suppose $\exp(C)$ is wgp with gap $\eta'$ in $G_{n-1}(\C^{\U})$, then $\exp(B)$ is also wgp with gap $\eta'$ in $G_{n-1}(\C^{\U})$. 
\end{claim}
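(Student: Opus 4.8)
The plan is to exploit that $\gg_{n-1}$ is abelian — since $\gg$ has nilpotency class $n$ we have $[\gg_{n-1},\gg_{n-1}]\subseteq\gg_n=\{0\}$ — so that by the Baker--Campbell--Hausdorff formula $\exp$ restricts to a group homomorphism $(\gg_{n-1},+)\to(G_{n-1},\cdot)$; in particular $\exp(a+c)=\exp(a)\exp(c)$ for $a,c\in\gg_{n-1}$. (As throughout the section, $\exp$ denotes the internal map $\prod_{i\to\U}\exp$; note that $B=A+C$ and the decomposition bijection $A\times C\to B$ are internal since $A,C$ are, hence so is $\exp(B)$, and that $A$ is pseudofinite since $\bdl(A)<\infty$.)

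First I would record the size estimates. Since the decomposition map $A\times C\to B$ is a bijection and $\exp$ is injective on $B$, we get $|\exp(B)|=|B|=|A|\cdot|C|$, so by additivity of $\bdl$ (valid as $\bdl(A),\bdl(C)$ are finite) $\bdl(\exp(B))=\bdl(A)+\bdl(C)\in(0,\infty)$; in particular $\exp(B)$ is broad. Also, trivially $\bdl(\exp(C))\leq\bdl(C)$ since $\exp(C)$ is the image of $C$.

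The main step is the general-position estimate. Fix a proper subvariety $W\subsetneq G_{n-1}$ over $\C^\U$. The sets $\{\exp(a+c):c\in C\}$, for $a\in A$, partition $\exp(B)$: they cover it since $B=A+C$, and they are pairwise disjoint because $\exp(a_1+c_1)=\exp(a_2+c_2)$ forces $a_1+c_1=a_2+c_2$ by injectivity of $\exp$ on $B$, hence $a_1=a_2$ by directness of the sum. Since $\exp(a+c)\in W$ iff $\exp(c)\in\exp(a)^{-1}W$, we get $\{\exp(a+c):c\in C\}\cap W=\exp(a)\bigl(\exp(C)\cap\exp(a)^{-1}W\bigr)$, of cardinality $|\exp(C)\cap\exp(a)^{-1}W|$. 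Hence
$$|\exp(B)\cap W|=\sum_{a\in A}\bigl|\exp(C)\cap\exp(a)^{-1}W\bigr|\leq|A|\cdot\max_{a\in A}\bigl|\exp(C)\cap\exp(a)^{-1}W\bigr|,$$
and the maximum is attained at some $a^*\in A$ since $A$ is internal and pseudofinite. Now $\exp(a^*)^{-1}\in G_{n-1}(\C^\U)$, so left translation by it is an automorphism of the variety $G_{n-1}$ over $\C^\U$, whence $\exp(a^*)^{-1}W$ is again a proper subvariety of $G_{n-1}$ over $\C^\U$; so the hypothesis that $\exp(C)$ is wgp with gap $\eta'$ gives $\bdl\bigl(\exp(C)\cap\exp(a^*)^{-1}W\bigr)\leq\bdl(\exp(C))-\eta'$. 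Taking $\bdl$ of the displayed inequality,
$$\bdl(\exp(B)\cap W)\leq\bdl(A)+\bdl(\exp(C))-\eta'\leq\bdl(A)+\bdl(C)-\eta'=\bdl(\exp(B))-\eta',$$
which is exactly weak general position with gap $\eta'$ for $\exp(B)$.

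I expect the only genuinely delicate point to be the internality bookkeeping — checking that $B$, the decomposition map $A\times C\to B$, the set $\exp(B)$, and the internal family $\bigl(\exp(C)\cap\exp(a)^{-1}W\bigr)_{a\in A}$ are all internal, so that $\bdl$ is additive over the relevant products and the maximum above is attained. Given that $A$, $C$ and $W=\prod_{i\to\U}W_i$ are internal, this is routine; everything else is a direct $\bdl$-computation.
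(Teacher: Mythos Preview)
Your proof is correct and follows essentially the same approach as the paper's: partition $B$ (resp.\ $\exp(B)$) over $A$, use that $\exp$ is a homomorphism on the abelian $\gg_{n-1}$ to reduce each fibre to a translate of $C$ (resp.\ $\exp(C)$), and apply the wgp hypothesis on $\exp(C)$ to each translated proper subvariety. The only cosmetic difference is that you carry out the computation on the group side in $G_{n-1}$, whereas the paper pulls everything back to $\gg_{n-1}$ via $\exp^{-1}$; since $\exp$ is injective on $B$ these are equivalent, and your attention to internality so that the maximum over $A$ is attained is a nice touch that the paper leaves implicit.
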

\begin{proof}
First note that $\bdl(B)=\bdl(C)+\bdl(A)$.
Let $W$ be a proper subvariety of $G_{n-1}(\C^{\U})$. Then $$\exp^{-1}(W)\cap B=\bigsqcup_{a\in A}\exp^{-1}(W)\cap(a+C),$$
where $\bigsqcup$ denotes disjoint union. Therefore $\bdl(\exp^{-1}(W)\cap B)\leq \bdl(A)+\sup\{\bdl(\exp^{-1}(W)\cap(a+C)):a\in A\}=\bdl(A)+\sup\{\bdl((\exp^{-1}(W)-a)\cap C):a\in A\}$. Since $\gg_{n-1}$ is abelian, $\exp$ is a homomorphism, so we get $\exp^{-1}(W)-a=\exp^{-1}(W-\exp(a))$. As $\exp(C)$ is wgp with gap $\eta'$, 
we have $\bdl(\exp^{-1}(W-\exp(a))\cap C)\leq \bdl(C)-\eta'$ for all $a\in A$. We conclude that $\bdl(W\cap \exp(B))=\bdl(\exp^{-1}(W)\cap B)\leq \bdl(A)+\bdl(C)-\eta'=\bdl(B)-\eta'$.
\end{proof}

Recall that $X$ is the tuple of $\ell$ independent generics in $\gg$ and $B_m=\langle b_{m,j}:j\rangle_{\Q}$, $B_{m,N}  = \mathcal{A}_{N^m}(\{b_{m,j}:j\})$, $P_N(X)=\sum_{m\leq n}B_{m,N}$ and $P=\sum_{m\leq n}B_{m,{N^*}}$ where $N^*=\lim_{i\to\U}N_i$. Let $(e_i)_{i<s}$ be as given by Claim \ref{genericsGn} and 
put $E=\langle e_i:i<s\rangle_{\Q}$.
Consider $B_{m} \cap E$, a $\Q$-subspace of $B_{m}\cap \gg_{n-1}$. Take a basis $U_m=U_m^0\cup U_m^1\cup U_m^2$ of $B_{m}$, where $U_m^0$ is a basis of $B_{m} \cap E$ and $U_m^0\cup U_m^1$ is a basis of $B_{m}\cap \gg_{n-1}$ and such that $U_n^0$ is $\{e_i:i<s\}$ (note that $\{e_i:i<s\}$ is indeed a basis of $B_{n} \cap E=E$).  Applying Claim \ref{basechange} twice, for $m<n$ we obtain $0<t_m<t'_m$ not depending on $N$ such that
\[ B_{m,N} \subseteq \mathcal{A}_{t'_mN^m}(U_m)/t_m \subseteq \mathcal{A}_{(t'_m)^2N^m}(\{b_{m,j}:j\})/(t_m)^2. \]
Again using Claim \ref{basechange} twice, since $\langle \{b_{n,j} : j\} \cup \bigcup_{m<n} U_m^0 \rangle_\Q = \langle U_n \rangle_\Q = \langle \{b_{n,j} : j\}\rangle_\Q$, we can find $0<t_n<t_n'$ such that
$$B_{n,N}+\sum_{m<n}\mathcal{A}_{t'_mN^m}(U_m^0)/t_m \subseteq \mathcal{A}_{t'_nN^n}(U_n)/t_n
\subseteq \mathcal{A}_{(t'_n)^2N^n}(\{b_{n,j}:j\})/(t_n)^2.$$

Let $t:=\prod_{m\leq n}(t_m)^2$ and $t':=\prod_{m\leq n}(t'_m)^2$. Then for any $m\leq n$ we have  $\mathcal{A}_{(t'_m)^2N^m}(\{b_{m,j}:j\})/(t_m)^2\subseteq \mathcal{A}_{(t'N)^m}(\{b_{m,j}:j\})/t^m$ as $(t_m)^2|t^m$ and $\frac{t^m}{(t_m)^2}(t_m')^2\leq (t')^m$. Also, using multilinearity of Lie monomials,  we get $$\sum_{m\leq n}\mathcal{A}_{(t'N)^m}(\{b_{m,j}:j\})/t^m=P_{t'N}(X/t).$$

Let $$C_N:=\mathcal{A}_{t'_nN^n}(U_n)/t_n + \sum_{m<n}\mathcal{A}_{t'_mN^m}(U_m^1)/t_m \text{ and } D_N:=\sum_{m< n}\mathcal{A}_{t'_mN^m}(U_m^2)/t_m.$$
As $C_N\subseteq \gg_{n-1}$ and $\langle D_N\rangle_{\Q} \cap \gg_{n-1}=\{0\}$, we have $C_N+D_N=C_N\oplus D_N$. Then
\begin{align*} C_N \oplus D_N &\supseteq B_{n,N} + \sum_{m<n}\mathcal{A}_{t'_mN^m}(U_m^0)/t_m + \sum_{m<n}\mathcal{A}_{t'_mN^m}(U_m^1)/t_m + \sum_{m<n}\mathcal{A}_{t'_mN^m}(U_m^2)/t_m \\
&= B_{n,N} + \sum_{m<n}\mathcal{A}_{t'_mN^m}(U_m)/t_m
\supseteq \sum_{m\leq n} B_{m,N} ,\end{align*}
and
\begin{align*} C_N \oplus D_N &\subseteq \sum_{m\leq n}\mathcal{A}_{t'_mN^m}(U_m)/t_m \\
&\subseteq \sum_{m\leq n} \mathcal{A}_{(t'_m)^2N^m}(\{b_{m,j}:j\})/(t_m)^2 \\
&\subseteq \sum_{m\leq n} \mathcal{A}_{(t'N)^m}(\{b_{m,j}:j\})/t^m = P_{t'N}(X/t),\end{align*} 
so
$$P_N(X)=\sum_{m\leq n}B_{m,N}\subseteq C_N\oplus D_N\subseteq P_{t'N}(X/t).$$
Hence \begin{equation}\label{eq*}P=P_{N^*}(X)=\sum_{m\leq n}B_{m,{N^*}}\subseteq C_{N^*}\oplus D_{N^*}\subseteq P_{t'N^*}(X/t).\end{equation}

\begin{claim}\label{samefin}
$\bdl(P)=\bdl(C_{N^*})+\bdl(D_{N^*})$ and $\bdl(\pi(P))=\bdl(D_{N^*})=\bdl(\pi(P_{t'N^*}(X/t)))$ where $\pi:\gg\to \gg/\gg_{n-1}$ is the quotient map.
\end{claim}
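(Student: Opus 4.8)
The plan is to squeeze both $P$ and $\pi(P)$ between a nilbox on the generators $X$ and a rescaling of a nilbox on the generators $X/t$, and then to use the fact that $\bdl$ of a nilbox is unchanged if its length is multiplied by a bounded factor. The assertion then follows by reading off $\bdl$ from the chain \eqref{eq*}, together with the auxiliary chain obtained by applying $\pi$, since the two outer terms of each chain will turn out to have the same $\bdl$.

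First I would record the length-rescaling fact in enough generality to cover all sets in play. For $N\in\N^\U$ let $B_N:=\prod_{(m,j)}\big([-N^m,N^m]\cap\Z\big)$ be the box of integer tuples indexed by the basic commutators, with the $(m,j)$-entry bounded by $N^m$, and let $\phi$ be the $\Z$-linear map sending $(t_{m,j})_{m,j}$ to $\sum_{m,j}t_{m,j}b_{m,j}$, so $P_N(X)=\phi(B_N)$. For standard $K\in\N_{\ge1}$ the box $B_{KN}$ is a union of at most $L:=\prod_{(m,j)}(2K^m+3)$ translates of $B_N$, with $L$ independent of $N$; applying $\phi$ (resp.\ $\pi\circ\phi$), both $P_{KN}(X)$ and $\pi(P_{KN}(X))$ are unions of $L$ translates of $P_N(X)$ and of $\pi(P_N(X))$ respectively. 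Since $L$ is standard, this gives $\bdl(P_{KN}(X))=\bdl(P_N(X))$ and $\bdl(\pi(P_{KN}(X)))=\bdl(\pi(P_N(X)))$ (one inclusion from $B_N\subseteq B_{KN}$, the other from the bounded cover).

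Next the reverse inclusions. With $K:=t'\,t^{\,n}$, I would check coordinatewise that $P_{t'N}(X/t)\subseteq\tfrac1{t^{n}}P_{KN}(X)$: an element of $P_{t'N}(X/t)$ is $\sum_{m,j}t_{m,j}\tfrac{b_{m,j}}{t^{m}}=\tfrac1{t^{n}}\sum_{m,j}(t_{m,j}t^{\,n-m})b_{m,j}$ with $|t_{m,j}|\le(t'N)^{m}$, and $t_{m,j}t^{\,n-m}\in\Z$ with $|t_{m,j}t^{\,n-m}|\le (t')^{m}t^{\,n-m}N^{m}\le(KN)^{m}$ because $n-m\le nm$ for $m\ge1$. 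Likewise, since every order-$n$ basic commutator lies in $\gg_{n-1}=\ker\pi$ and so drops out, the same computation with $K':=t'\,t^{\,n-1}$ gives $\pi\big(P_{t'N}(X/t)\big)\subseteq\tfrac1{t^{\,n-1}}\pi\big(P_{K'N}(X)\big)$. Passing to the ultraproduct and chaining with \eqref{eq*} (using $\pi(C_{N^*})=\{0\}$, which holds as $C_{N^*}\subseteq\gg_{n-1}$) yields
\[ P\ \subseteq\ C_{N^*}\oplus D_{N^*}\ \subseteq\ P_{t'N^*}(X/t)\ \subseteq\ \tfrac1{t^{n}}P_{KN^*}(X), \]
\[ \pi(P)\ \subseteq\ \pi(D_{N^*})\ \subseteq\ \pi\big(P_{t'N^*}(X/t)\big)\ \subseteq\ \tfrac1{t^{\,n-1}}\pi\big(P_{K'N^*}(X)\big). \]

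Finally I would apply $\bdl$. Multiplication by a nonzero scalar is an internal bijection, so $\bdl(\tfrac1{t^{n}}P_{KN^*}(X))=\bdl(P_{KN^*}(X))=\bdl(P)$ by the rescaling fact; hence every term of the first chain has the same $\bdl$, and since the sum map $C_{N^*}\times D_{N^*}\to C_{N^*}\oplus D_{N^*}$ is a bijection, $\bdl(P)=\bdl(C_{N^*}\oplus D_{N^*})=\bdl(C_{N^*})+\bdl(D_{N^*})$. Similarly $\bdl(\tfrac1{t^{\,n-1}}\pi(P_{K'N^*}(X)))=\bdl(\pi(P))$, so all terms of the second chain share a single value; as $\pi$ is injective on $\langle D_{N^*}\rangle_\Q\supseteq D_{N^*}$ we get $\bdl(\pi(D_{N^*}))=\bdl(D_{N^*})$, and therefore $\bdl(\pi(P))=\bdl(D_{N^*})=\bdl(\pi(P_{t'N^*}(X/t)))$, as claimed. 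The only step that is not routine bookkeeping with $\bdl$ is the reverse inclusion: the content there is that dividing the generators by $t$ and then multiplying the length by a suitable bounded factor returns one inside an integer nilbox of comparable length.
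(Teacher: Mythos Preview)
Your argument is correct. The overall architecture---sandwiching $P$ and $\pi(P)$ via the chain \eqref{eq*} between two nilboxes whose $\bdl$ agree, then reading off the additivity from $C_{N^*}\oplus D_{N^*}$---matches the paper's proof exactly, including the finite-translate rescaling step. The one genuine difference is in how the loop is closed, i.e.\ how $\bdl(P_{t'N^*}(X/t))$ is tied back to $\bdl(P_{N^*}(X))$. The paper observes that $X$ and $X/t$ are both generic in $\gg^{\ell}$ over $C_0$ and hence realise the same type, giving $\bdl(P_{t'N^*}(X))=\bdl(P_{t'N^*}(X/t))$ in one line by automorphism invariance; you instead produce an explicit integral inclusion $P_{t'N}(X/t)\subseteq t^{-n}P_{KN}(X)$ (and its $\pi$-analogue with $t^{-(n-1)}$ and $K'$), then use that scalar multiplication is a bijection. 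Your route is a little longer but more elementary---it never invokes the genericity of $X$---whereas the paper's type argument is slicker but imports the ambient hypotheses on $X$. The inequality $n-m\le nm$ you need holds since every basic commutator has order $m\ge1$, so the inclusion is clean.
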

\begin{proof}
 Note that $P_{t'N^*}(X)=\sum_{m\leq n}\mathcal{A}_{(t')^m(N^*)^m}(\{b_{m,j}:j\})$ is a sum of generalised arithmetic progressions. Thus there are finite sets $F_m$ such that $$\mathcal{A}_{(t')^m(N^*)^m}(\{b_{m,j}:j\})\subseteq F_m+\mathcal{A}_{(N^*)^m}(\{b_{m,j}:j\})$$ for all $m\leq n$. Hence,
\begin{align*}
P_{t'N^*}(X)&=\sum_{m\leq n}\mathcal{A}_{(t')^m(N^*)^m}(\{b_{m,j}:j\})\\
&\subseteq \sum_{m\leq n} F_m+\sum_{m\leq n}\mathcal{A}_{(N^*)^m}(\{b_{m,j}:j\})\\
&=\sum_{m\leq n} F_m+P_{N^*}(X).
\end{align*}
As $\sum_{m\leq n} F_m$ is a finite set, we conclude $\bdl(P_{N^*}(X))=\bdl(P_{t'N^*}(X))= \bdl(P_{t'N^*}(X/t))$, where the last equality holds since $X$ and $X/t$ have the same type over $C_0$ (as they are both generic in $\gg^{\ell}$ over $C_0$), so $P_{t'N^*}(X/t)$ is the image of $P_{t'N^*}(X)$ under an internal automorphism of $\C^\U$.

Also, by (\ref{eq*}) we have $$\bdl(P)\leq \bdl(C_{N^*})+\bdl(D_{N^*})\leq \bdl(P_{t'N^*}(X/t)),$$ which combined with the above equality gives $\bdl(P)= \bdl(C_{N^*})+\bdl(D_{N^*})=\bdl(P_{t'N^*}(X/t)).$

For the second part of the claim, note that,  as $\pi:\gg\to \gg/\gg_{n-1}$ is a Lie algebra homomorphism, we have $\pi(P_{N^*}(X))=P^{\gg/\gg_{n-1}}_{N^*}(\pi(X))$ and $\pi(P_{t'N^*}(X/t))=P^{\gg/\gg_{n-1}}_{t'N^*}(\pi(X)/t)$.
By the same arguments as above, \\ $\bdl(P^{\gg/\gg_{n-1}}_{N^*}(\pi(X)))=\bdl(P^{\gg/\gg_{n-1}}_{t'N^*}(\pi(X)))=\bdl(P^{\gg/\gg_{n-1}}_{t'N^*}(\pi(X)/t)).$ Therefore,\\ $\bdl(\pi(P_{N^*}(X)))=\bdl(\pi(P_{t'N^*}(X/t)))$. On the other hand, as $\pi(C_{N^*})=\{0\}$ and $\pi$ is injective on $D_{N^*}$, we get by (\ref{eq*}) that $$\bdl(\pi(P_{N^*}(X)))\leq \bdl(\pi(D_{N^*}))=\bdl(D_{N^*})\leq \bdl(\pi(P_{t'N^*}(X/t))),$$ so we get the desired equalities.
\end{proof}

\begin{claim}
$\exp(C_{N^*})$ is wgp with gap $\eta_1$ in $G_{n-1}(\C^{\U})$.
\end{claim}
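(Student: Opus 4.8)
The plan is to peel off from $C_{N^*}$ the summand $C^0 := \mathcal{A}_{t'_n(N^*)^n}(U_n^0)/t_n$ generated by $U_n^0 = \{e_i : i < s\}$, to recognise $\exp(C^0)$ as a generic arithmetic progression in the abelian group $G_{n-1}$ to which the inductive hypothesis applies, and then to absorb the complementary summand $A := \sum_{m\leq n}\mathcal{A}_{t'_m(N^*)^m}(U_m^1)/t_m$ by Claim~\ref{diretsum}. We may assume $\bdl(N^*) < \infty$, since otherwise $\bdl(P) = \infty$ and the conclusion of Theorem~\ref{wgpLie} is vacuous. By \eqref{eq*} we have $C_{N^*} \subseteq C_{N^*}\oplus D_{N^*} \subseteq P_{t'N^*}(X/t)$, and $\exp$ is injective on $P_{t'N^*}(X/t)$ by Lemma~\ref{exp_inj}, applied via Remark~\ref{remark-generics} to the tuple $X/t$ (which is again a tuple of independent generics of $\gg$ over $\Delta$, since division by the nonzero integer $t$ is an automorphism of the variety $\gg$ over $\Q \subseteq \Delta$). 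Hence $\exp$ is injective on $C_{N^*}$, so $\bdl(\exp(C^0)) = \bdl(C^0) = s\cdot n\cdot\bdl(N^*) \in (0,\infty)$.

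The main point is that $\exp(C^0)$ is wgp with gap $\eta_1$ in $G_{n-1}(\C^{\U})$. Since $\gg_{n-1}$ is abelian, $\exp|_{\gg_{n-1}}$ is a group homomorphism, so $\exp(C^0) = \prod_{i\to\U}P(\{\exp(e_j/t_n) : j < s\};\, t'_nN_i^n)$ is a nilprogression of rank $s = \ell(G_{n-1})$ and (nonstandard) length $\lim_{i\to\U}t'_nN_i^n$ in $G_{n-1}$. By Claim~\ref{genericsGn}, $\exp(e_0),\dots,\exp(e_{s-1})$ are $s$ independent generics of $G_{n-1}$ over $C_0$; hence so are their $t_n$-th roots $\exp(e_0/t_n),\dots,\exp(e_{s-1}/t_n)$, since the multiplication-by-$t_n$ map on $G_{n-1}$ is a surjective isogeny in characteristic $0$. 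Moreover this nilprogression is broad, by the previous paragraph. Now Theorem~\ref{wgpLie} holds for $G_{n-1}$ by the inductive hypothesis, and therefore so does Theorem~\ref{wgp} for $G_{n-1}$, with the same constants $\ell(G_{n-1}) = s$ and $\eta(G_{n-1}) = \eta_1$ (this is exactly the deduction of Theorem~\ref{wgp} from Theorem~\ref{wgpLie} carried out above). Applying Theorem~\ref{wgp} for $G_{n-1}$ to the above nilprogression gives that $\exp(C^0)$ is wgp with gap $\eta_1$.

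It remains to pass from $C^0$ to $C_{N^*} = C^0 \oplus A$. If $A = \{0\}$ there is nothing more to prove. Otherwise $A$ is internal with $0 < \bdl(A) < \infty$, and Claim~\ref{diretsum}, applied in the abelian group $G_{n-1}(\C^{\U})$ with $B := C_{N^*}$, $C := C^0$, and $A := A$ (using the direct sum decomposition, the finiteness and positivity of the $\bdl$'s just noted, and injectivity of $\exp$ on $C_{N^*}$), yields that $\exp(C_{N^*})$ is wgp with gap $\eta_1$, as required.

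The step I expect to be the main obstacle is the assertion that $\exp(C^0)$ is wgp with gap $\eta_1$. The delicate point is that Claim~\ref{genericsGn} only gives genericity of $\exp(e_i)$ in the \emph{group} $G_{n-1}$, not genericity of the $e_i$ in the \emph{Lie algebra} $\gg_{n-1}$ over the countable set occurring in Theorem~\ref{wgpLie}; as $\exp$ is transcendental, the former does not imply the latter, so one must route through Theorem~\ref{wgp} for $G_{n-1}$, whose hypothesis involves only group-level genericity, rather than apply Theorem~\ref{wgpLie} for $G_{n-1}$ directly with the $e_i$ as generators. A secondary point needing care is the bookkeeping of the rational rescalings ($t_n$ divides $t'_n$, and $t = \prod_{m\leq n}(t_m)^2$) together with checking that $t$-th roots preserve independent-genericity, which uses characteristic $0$ and connectedness.
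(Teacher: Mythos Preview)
Your proof is correct and follows essentially the same approach as the paper: split $C_{N^*}$ as $E'_{N^*} \oplus A$ with $E'_{N^*} = \mathcal{A}_{t'_n(N^*)^n}(U_n^0)/t_n$, use Claim~\ref{genericsGn} and the inductive hypothesis (via the deduction of Theorem~\ref{wgp} from Theorem~\ref{wgpLie} for the abelian group $G_{n-1}$) to see that $\exp(E'_{N^*})$ is wgp with gap $\eta_1$, and then absorb $A$ via Claim~\ref{diretsum}. Your write-up is more explicit than the paper's about the passage from Lie-algebra-level to group-level genericity and about the rescaling bookkeeping, but the argument is the same.
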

\begin{proof}
By definition $C_{N^*}=\mathcal{A}_{t'_n(N^*)^n}(U_n^0)/t_n\oplus\sum_{m\leq n}\mathcal{A}_{t'_m(N^*)^m}(U_m^1)/t_m$. Recall that $U_n^0$ is $\{e_i:i<s\}$, where $\{\exp(e_i):i<s\}$ is a set of $s$ independent generics in $G_{n-1}$ over $C_0$ by Claim \ref{genericsGn}. Hence $\{\exp(e_i/t_n):i<s\}$ are also independent and generic in $G_{n-1}$. Let $E_{N^*}':=\mathcal{A}_{t'_n(N^*)^n}(\{e_i/t_n,i<s\})=\mathcal{A}_{t'_n(N^*)^n}(U_n^0)/t_n$. Thus, by the choice of $s$ and $\eta_1$ we know $\exp(E_{N^*}')$ is wgp with gap $\eta_1$.

As $C_{N^*}=E_{N^*}'\oplus\sum_{m\leq n}\mathcal{A}_{t'_m(N^*)^m}(U_m^1)/t_m$ and $0<\bdl(C_{N^*}),\bdl(E_{N^*}')<\infty$, we get by Claim \ref{diretsum} that $\exp(C_{N^*})$ is wgp with gap $\eta_1$.
\end{proof}

Now we are ready to prove the main theorem.\\
\textbf{Proof of Theorem \ref{wgpLie}.}

 Let $W$ be a proper subvariety of $G$ over $\C^{\U}$ and consider $\exp^{-1}(W)\cap P$; we want to show $$\bdl(\exp^{-1}(W)\cap P)\leq \bdl(P)-\eta.$$ There are two cases, the first case is that $W/G_{n-1}$ is a proper subvariety of $G/G_{n-1}$.

\begin{claim}\label{case1}
Let $W$ be a proper subvariety of $G$ over $\C^{\U}$. Suppose $W/G_{n-1}$ is a proper subvariety of $G/G_{n-1}$. Then $\bdl(\exp^{-1}(W)\cap P)\leq \bdl(P)-\eta$.
\end{claim}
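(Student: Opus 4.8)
The plan is to push $W$ down to the quotient $G/G_{n-1}$ and invoke the inductive hypothesis (Theorem~\ref{wgpLie} for $G/G_{n-1}$), exploiting the direct-sum decomposition $P \subseteq C_{N^*}\oplus D_{N^*}$ from $(\ref{eq*})$ together with the fact that the Lie-algebra quotient $\pi : \gg \to \gg/\gg_{n-1}$ kills $C_{N^*}$ and is injective on $D_{N^*}$.

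First I would set $\bar W := W/G_{n-1}$, which by hypothesis is a proper subvariety of $G/G_{n-1}$ over $\C^\U$ and contains the image $\bar\pi(W)$ of $W$ under the algebraic quotient $\bar\pi : G \to G/G_{n-1}$. By Fact~\ref{functoriality}, $\exp_{G/G_{n-1}}\circ\pi = \bar\pi\circ\exp_G$; hence $\exp_G(v)\in W$ forces $\exp_{G/G_{n-1}}(\pi(v))\in\bar W$, so $\exp^{-1}(W)\cap P \subseteq \{v\in P : \pi(v)\in\exp_{G/G_{n-1}}^{-1}(\bar W)\}$. By $(\ref{eq*})$ every $v\in P$ is uniquely $v = c+d$ with $c\in C_{N^*}\subseteq\gg_{n-1}=\ker\pi$ and $d\in D_{N^*}$, and $\pi(v)=\pi(d)$; therefore $\exp^{-1}(W)\cap P \subseteq C_{N^*} + \bigl(D_{N^*}\cap\pi^{-1}(\exp_{G/G_{n-1}}^{-1}(\bar W))\bigr)$, and this sum is direct since it lies in $C_{N^*}\oplus D_{N^*}$, giving
\[ \bdl(\exp^{-1}(W)\cap P) \le \bdl(C_{N^*}) + \bdl\bigl(D_{N^*}\cap\pi^{-1}(\exp_{G/G_{n-1}}^{-1}(\bar W))\bigr). \]

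To bound the second summand I would use the inductive hypothesis. Since $\pi$ is injective on $D_{N^*}$, it equals $\bdl\bigl(\pi(D_{N^*})\cap\exp_{G/G_{n-1}}^{-1}(\bar W)\bigr)$, and by $(\ref{eq*})$ and $\pi$ being a Lie-algebra homomorphism, $\pi(D_{N^*}) \subseteq \pi(P_{t'N^*}(X/t)) = P_{t'N^*}^{\gg/\gg_{n-1}}(\pi(X)/t)$, a nilbox of $G/G_{n-1}$ of nonstandard length $t'N^*$ with $\bdl(t'N^*)=\bdl(N^*)>0$. Here one checks that $\pi(X)/t$ is a tuple of independent generics in $\gg/\gg_{n-1}$ over the relevant countable parameter set — which embeds into the one for $G$ since $(G/G_{n-1})/(G/G_{n-1})_k = G/G_k$ — and that $\exp_{G/G_{n-1}}(\pi(X)/t)=\bar\pi(\exp_G(X/t))$ is a tuple of independent generics in $G/G_{n-1}$ over $C_0$: the Lie-algebra part is inherited because $X/t$ has the same type as $X$ over $\Q\subseteq\Delta$, and for the group part $\exp_G(x_i/t)$ is a $t$-th root of $\exp_G(x_i)$ and the $t$-th power map of the nilpotent group $G$ is surjective with finite fibres, so passing from $X$ to $X/t$ preserves the relevant transcendence degrees over $C_0$, which then survive the surjection $\bar\pi$. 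Granting this, Theorem~\ref{wgpLie} applied to $G/G_{n-1}$ and the proper subvariety $\bar W$ yields
\[ \bdl\bigl(P_{t'N^*}^{\gg/\gg_{n-1}}(\pi(X)/t)\cap\exp_{G/G_{n-1}}^{-1}(\bar W)\bigr) \le \bdl\bigl(P_{t'N^*}^{\gg/\gg_{n-1}}(\pi(X)/t)\bigr) - \eta_2 = \bdl(D_{N^*}) - \eta_2, \]
the last equality being Claim~\ref{samefin}. Combining with the previous display, $\bdl(P)=\bdl(C_{N^*})+\bdl(D_{N^*})$ (again Claim~\ref{samefin}) and $\eta=\min(\eta_1,\eta_2)\le\eta_2$, we get $\bdl(\exp^{-1}(W)\cap P)\le\bdl(P)-\eta_2\le\bdl(P)-\eta$.

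The main obstacle is the bookkeeping around this application of the inductive hypothesis: beyond the genericity checks above, one must justify using Theorem~\ref{wgpLie} for $G/G_{n-1}$ with $\ell=\ell(G)\ge\ell(G/G_{n-1})$ generators rather than exactly $\ell(G/G_{n-1})$ — which is fine since the number of generators enters the proof of Theorem~\ref{wgpLie} only through the lower bounds $\ell\ge sdn$ and $\ell\ge\ell'$, the base case Theorem~\ref{t:arithProgWGP} already being stated for all $r\ge\dim V_0$ — and one must keep track of the parameter sets $\Delta$ for the various quotients. As expected, this case uses only the gap $\eta_2$ coming from $G/G_{n-1}$ and not the weak general position of $\exp(C_{N^*})$ in $G_{n-1}$, which will instead be the essential ingredient in the complementary case where $W/G_{n-1}=G/G_{n-1}$.
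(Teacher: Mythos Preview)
Your argument is correct and follows essentially the same route as the paper: push through $\pi$ to the nilbox $\pi(P_{t'N^*}(X/t))=P_{t'N^*}^{\gg/\gg_{n-1}}(\pi(X)/t)$ in $\gg/\gg_{n-1}$, apply the inductive hypothesis (Theorem~\ref{wgpLie} for $G/G_{n-1}$) to the proper subvariety $W/G_{n-1}$, and then use Claim~\ref{samefin} and the decomposition $(\ref{eq*})$ to convert back to $\bdl(P)-\eta_2\leq\bdl(P)-\eta$. Your write-up is in fact somewhat more explicit than the paper's on the genericity checks for $\pi(X)/t$ and on why having $\ell\geq\ell'$ generators rather than exactly $\ell'$ causes no trouble, but the underlying proof is the same.
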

\begin{proof}
  Recall that we chose $X$ to have length $|X| \geq l'$, where $l' = l(G/G_{n-1})$ is as in Theorem~\ref{wgpLie}, and $\eta_2=\eta(G/G_{n-1})$. We aim to obtain that $\pi(P_{t'N^*}(X/t))$ is wgp.
As $X/t$ is a tuple of independent generics in $\gg$ and $\pi:\gg\to \gg_{n-1}$ is definable over $C_0$, $\pi(X/t)$ is a tuple of independent generics in $\gg/\gg_{n-1}$ over $\Delta$. Moreover $\exp(\pi(X/t))$ is a tuple of independent generics in $G/G_{n-1}$ as $\exp(X)$ is a tuple of independent generics in $G$ and $\exp(\pi(x/t))=h_x$ is interalgebraic with $\exp(\pi(x))$ as $(h_x)^t=\exp(\pi(x))$ for all $x\in X$.
Since $|\pi(X/t)|\geq \ell'$ and $\pi(P_{t'N^*}(X/t))$ is the nilbox in $\gg/\gg_{n-1}$ generated by $\pi(X/t)$ of length $t'N^*$, by the inductive hypothesis (and by the choice of $\ell'$ and $\eta_2$), $\pi(P_{t'N^*}(X/t))$ is wgp with gap $\eta_2$. Therefore
\begin{align*}
 \bdl(\exp^{-1}(W)\cap P)&\leq \bdl(\exp^{-1}(W)\cap (C_{N^*}\oplus D_{N^*}))\\
 &\leq \bdl(\pi(\exp^{-1}(W)\cap (C_{N^*}\oplus D_{N^*})))+\bdl(C_{N^*})\\
 &\leq \bdl(\pi(\exp^{-1}(W)\cap P_{t'N^*}(X/t)))+\bdl(C_{N^*})\mbox{ (by (\ref{eq*}))}\\
 &\leq \bdl(\pi(\exp^{-1}(W))\cap \pi(P_{t'N^*}(X/t)))+\bdl(C_{N^*})\\
 &\leq \bdl(\pi(P_{t'N^*}(X/t)))+\bdl(C_{N^*})-\eta_2\quad(\text{by IH})\\
 &=\bdl(D_{N^*})+\bdl(C_{N^*})-\eta_2=\bdl(P)-\eta_2\quad(\text{by Claim \ref{samefin}})\\
 &\leq \bdl(P)-\eta.\qedhere
\end{align*}

\end{proof}

Now we consider the second case, i.e.\ $\dim(W/G_{n-1})=\dim(G/G_{n-1})$.
Let $$V':=\{w\in W:\dim(wG_{n-1}\cap W)=\dim(G_{n-1})\} \text{ and } V:=\{w\in W:\dim(wG_{n-1}\cap W)<\dim(G_{n-1})\}.$$
Then $\dim(G)>\dim(V')=\dim(G_{n-1})+\dim(V'/G_{n-1})$. Hence $\dim(V'/G_{n-1})<\dim(G/G_{n-1})$. Consider $(V')^{Zar}$ a proper subvariety of $G$. Then $$\dim\left((V')^{Zar}/G_{n-1}\right)\leq \dim\left((V'/G_{n-1})^{Zar}\right)=\dim(V'/G_{n-1})<\dim(G/G_{n-1}).$$
By Claim \ref{case1} applied to $(V')^{Zar}$, $\bdl(\exp^{-1}(V')\cap P)\leq \bdl(\exp^{-1}((V')^{Zar})\cap P)\leq \bdl(P)-\eta$. As $$\bdl(\exp^{-1}(W)\cap P)=\bdl(\exp^{-1}(V\cup V')\cap P)=\max\{\bdl(\exp^{-1}(V)\cap P),\bdl(\exp^{-1}(V')\cap P)\},$$ we only need to show that $\bdl(\exp^{-1}(V)\cap P)\leq \bdl(P)-\eta$. Note that for $w,w'\in W$ with $wG_{n-1}=w'G_{n-1}$ we have $w\in V\iff w'\in V$, hence $vG_{n-1}\cap V=vG_{n-1}\cap W$ is Zariski-closed for all $v\in V$.

\begin{claim}
Suppose $V$ is a constructible subset of $G$ such that $vG_{n-1}\cap V$ is Zariski-closed and $\dim(vG_{n-1}\cap V)<\dim(G_{n-1})$ for all $v\in V$. Then $\bdl(\exp^{-1}(V)\cap P)\leq \bdl(P)-\eta$.
\end{claim}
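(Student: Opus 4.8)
The plan is to slice $P$ along the central subalgebra $\gg_{n-1}$ and bound each slice uniformly using the weak general position of $\exp(C_{N^*})$ established above. By $(\ref{eq*})$ we have $\exp^{-1}(V)\cap P\subseteq\exp^{-1}(V)\cap(C_{N^*}\oplus D_{N^*})$, and since this sum is direct the assignment $d+c\mapsto(d,c)$ identifies $\exp^{-1}(V)\cap(C_{N^*}\oplus D_{N^*})$ bijectively with $\mathcal{Z}:=\{(d,c)\in D_{N^*}\times C_{N^*}:\exp(d+c)\in V\}$, so it suffices to show $\bdl(\mathcal{Z})\leq\bdl(P)-\eta$. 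I would fibre $\mathcal{Z}$ over $D_{N^*}$, writing $\mathcal{Z}_d:=\{c\in C_{N^*}:\exp(d+c)\in V\}$, so that $|\mathcal{Z}|=\sum_{d\in D_{N^*}}|\mathcal{Z}_d|\leq|D_{N^*}|\cdot\max_{d\in D_{N^*}}|\mathcal{Z}_d|$, with the maximum attained at some (possibly non-standard) element $d_*\in D_{N^*}$ by internality of the cardinality function.

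Next I would identify each fibre with an intersection of $\exp(C_{N^*})$ with a proper subvariety of $G_{n-1}$. Since $\gg_{n-1}\subseteq Z(\gg)$ and $C_{N^*}\subseteq\gg_{n-1}$, the Baker-Campbell-Hausdorff formula gives $\exp(d+c)=\exp(d)\exp(c)$ with $\exp(c)\in G_{n-1}$ by Fact~\ref{expSurj}(i), hence $\mathcal{Z}_d=\{c\in C_{N^*}:\exp(c)\in V_d\}$ where $V_d:=\exp(d)^{-1}V\cap G_{n-1}$. When $V_d\neq\emptyset$, picking $v\in V\cap\exp(d)G_{n-1}$ gives $vG_{n-1}=\exp(d)G_{n-1}$, so by hypothesis $vG_{n-1}\cap V$ is Zariski-closed of dimension $<\dim G_{n-1}$, and translating by $\exp(d)^{-1}$ shows that $V_d$ is a proper Zariski-closed subvariety of $G_{n-1}$ over $\C^\U$. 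Since $C_{N^*}\subseteq C_{N^*}\oplus D_{N^*}\subseteq P_{t'N^*}(X/t)$ and $X/t$ is a tuple of independent generics over $\Delta$, $\exp$ is injective on $C_{N^*}$ by Remark~\ref{remark-generics}, so $|\mathcal{Z}_d|=|\exp(C_{N^*})\cap V_d|$ for every $d$, and in particular $\bdl(\exp(C_{N^*}))=\bdl(C_{N^*})$.

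Finally I would invoke that $\exp(C_{N^*})$ is wgp with gap $\eta_1$ in the irreducible variety $G_{n-1}(\C^\U)$, applied to the single proper subvariety $V_{d_*}$ (if $\mathcal{Z}=\emptyset$ there is nothing to prove): this yields $\bdl(|\mathcal{Z}_{d_*}|)=\bdl(\exp(C_{N^*})\cap V_{d_*})\leq\bdl(C_{N^*})-\eta_1$. Combining with Claim~\ref{samefin} we get
\[\bdl(\exp^{-1}(V)\cap P)\leq\bdl(\mathcal{Z})\leq\bdl(D_{N^*})+\bdl(C_{N^*})-\eta_1=\bdl(P)-\eta_1\leq\bdl(P)-\eta,\]
since $\eta=\min(\eta_1,\eta_2)$. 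The one step needing care is the passage from the pointwise estimate on $\bdl(\exp(C_{N^*})\cap V_d)$ for each fixed $d$ to a bound on the whole family $(\mathcal{Z}_d)_d$; this works precisely because we argue pseudofinitely, so $\max_d|\mathcal{Z}_d|$ is realised by an honest (non-standard) element $d_*$ and wgp-with-gap need only be applied to the single variety $V_{d_*}$, defined over $\K=\C^\U$, rather than to a non-standard-sized family of subvarieties.
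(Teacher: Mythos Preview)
Your proof is correct and follows essentially the same route as the paper's: both use $(\ref{eq*})$ to pass to $C_{N^*}\oplus D_{N^*}$, fibre over $D_{N^*}$, use the Baker--Campbell--Hausdorff identity $\exp(d+c)=\exp(d)\exp(c)$ for $c\in\gg_{n-1}\subseteq Z(\gg)$ to identify each fibre with $\exp(C_{N^*})\cap V_d$ for a proper subvariety $V_d\subseteq G_{n-1}$, and then apply the wgp-with-gap-$\eta_1$ property of $\exp(C_{N^*})$ together with Claim~\ref{samefin}. Your explicit remark that the maximum over $d$ is attained at some internal $d_*$, so that wgp need only be invoked for the single variety $V_{d_*}$ over $\C^\U$, makes precise a point the paper leaves implicit in its use of $\sup$.
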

\begin{proof}
Note that, by (\ref{eq*}), $\exp^{-1}(V)\cap P\subseteq \exp^{-1}(V)\cap (C_{N^*}\oplus D_{N^*})$. Therefore, it is enough to show that $\bdl(\exp^{-1}(V)\cap (C_{N^*}\oplus D_{N^*}))\leq \bdl(C_{N^*})+\bdl(D_{N^*})-\eta=\bdl(P)-\eta$.
 As $\exp^{-1}(V)\cap (C_{N^*}\oplus D_{N^*})=\bigsqcup_{d\in D_{N^*}}\exp^{-1}(V)\cap (C_{N^*}+d)$, we get $$\bdl(\exp^{-1}(V)\cap (C_{N^*}\oplus D_{N^*}))\leq \bdl(D_{N^*})+\sup\{\bdl(\exp^{-1}(V)\cap (C_{N^*}+d)):d\in D_{N^*}\}.$$ It is enough to show that $\sup\{\bdl(\exp^{-1}(V)\cap (C_{N^*}+d)):d\in D_{N^*}\}\leq \bdl(C_{N^*})-\eta$.

We claim that \begin{equation}\label{eq7}(\exp^{-1}(V)-d)\cap \gg_{n-1}\subseteq \exp^{-1}(\exp(d)^{-1}V\cap G_{n-1}).\end{equation} Let $a-d\in \gg_{n-1}$ with $\exp(a)\in V$. Since $a-d\in \gg_{n-1}$, by the Baker-Campbell-Hausdorff formula $\exp(d)\exp(a-d)=\exp(d+a-d)=\exp(a)$. Therefore, $\exp(a-d)= \exp(d)^{-1}\exp(a)\in \exp(d)^{-1}V\cap G_{n-1}$ as desired. Now we want to show that $\dim(\exp(d)^{-1}V\cap G_{n-1})<\dim(G_{n-1})$. We may assume $\exp(d)^{-1}V\cap G_{n-1}\neq\emptyset$, then there is $g\in G_{n-1}$ such that $v_d:=\exp(d)g\in V$. Hence $$\dim(\exp(d)^{-1}V\cap G_{n-1})=\dim(V\cap \exp(d)G_{n-1})=\dim(V\cap v_dG_{n-1})<\dim(G_{n-1})$$ by our assumption on $V$.

In conclusion, we have a definable family $\{\exp(d)^{-1}V\cap G_{n-1}:d\in D_{N^*}\}$ of proper subvarieties of $G_{n-1}$. As $\exp(C_{N^*})$ is wgp with gap $\eta_1$, we have
$$\sup\{\bdl(\exp^{-1}(\exp(d)^{-1}V\cap G_{n-1})\cap C_{N^*}):d\in D_{N^*}\}\leq \bdl(C_{N^*})-\eta_1.$$ Therefore,
\begin{align*}
&\sup\{\bdl(\exp^{-1}(V)\cap (C_{N^*}+d)):d\in D_{N^*}\}\\
&=\sup\{\bdl((\exp^{-1}(V)-d) \cap C_{N^*}):d\in D_{N^*}\}\\
&=\sup\{\bdl((\exp^{-1}(V)-d)\cap \gg_{n-1}\cap C_{N^*}):d\in D_{N^*}\}\\
&\leq\sup\{\bdl(\exp^{-1}(\exp(d)^{-1}V\cap G_{n-1})\cap C_{N^*}):d\in D_{N^*}\} \mbox{ (by (\ref{eq7}))}\\
&\leq\bdl(C_{N^*})-\eta_1 \leq \bdl(C_{N^*})-\eta.\qedhere
\end{align*}

\end{proof}
This concludes the proof of Theorem \ref{wgpLie}.


\section*{Acknowledgments} 
The authors thank the anonymous referees for their careful reading and 
comments which substantially improved the paper.

\bibliographystyle{amsplain}

\begin{dajauthors}
\begin{authorinfo}[mb]
  Martin Bays\\
  Fachbereich Mathematik und Informatik\\
  Universität Münster\\
  Münster, Germany\\
  mbays@sdf.org
\end{authorinfo}
\begin{authorinfo}[jd]
  Jan Dobrowolski\\
  Department of Mathematics\\
  University of Manchester\\
  Manchester, UK\\
  dobrowol@math.uni.wroc.pl
\end{authorinfo}
\begin{authorinfo}[tz]
  Tingxiang Zou\\
  Fachbereich Mathematik und Informatik\\
  Universität Münster\\
  Münster, Germany\\
  tzou@uni-muenster.de
\end{authorinfo}
\end{dajauthors}

\end{document}